\newtheorem{theorem}{Theorem}[section]
\newtheorem{corollary}[theorem]{Corollary}
\newtheorem{lemma}[theorem]{Lemma}
\newtheorem{proposition}[theorem]{Proposition}
\numberwithin{equation}{section}
\theoremstyle{definition}
\newtheorem{definition}[theorem]{Definition}
\newtheorem{notation}[theorem]{Notation}
\newtheorem{remark}[theorem]{Remark}
\newtheorem{example}[theorem]{Example}
\newtheorem*{ack}{Acknowledgments}
\newcommand{\ds}{\displaystyle}
\newcommand{\mc}{\mathcal}
\newcommand{\mf}{\mathfrak}
\newcommand{\ZZ}{\mathbb{Z}}
\newcommand{\CC}{\mathbb{C}}
\renewcommand{\phi}{\varphi}
\newcommand{\cok}{\operatorname{cok}}
\renewcommand{\hom}{\textup{Hom}}
\newcommand{\depth}{\operatorname{depth}}
\newcommand{\lcm}{\operatorname{lcm}}
\newcommand{\ord}{\operatorname{ord}}
\newcommand{\rank}{\operatorname{rank}}
\newcommand{\rad}{\operatorname{rad}}
\newcommand{\End}{\operatorname{End}}
\newcommand{\image}{\operatorname{Im}}
\newcommand{\ulcomp}{\operatorname{uc}}
\newcommand{\matfac}[3]{\operatorname{MF}_{#1}^{#2}(#3)}
\newcommand{\wotimes}[1]{\otimes^{#1}}
\title{Tensor products of $d$-fold matrix factorizations}
\author[Sheng]{Richie Sheng}
\address{Department of Mathematics, University of Utah, Salt Lake City, UT, USA}
\email{u1415944@utah.edu}
\author[Tribone]{Tim Tribone}
\address{Department of Mathematics, University of Utah, Salt Lake City, UT, USA}
\email{tim.tribone@utah.edu}
\urladdr{\href{https://timtribone.com/}{timtribone.com}}
\subjclass{13C14, 13C13, 13H10, 13F25}
\keywords{matrix factorizations, tensor product, Ulrich modules, maximal Cohen-Macaulay modules}
\begin{document}
    
    \begin{abstract}
        Consider a pair of elements $f$ and $g$ in a commutative ring $Q$. Given a matrix factorization of $f$ and another of $g$, the tensor product of matrix factorizations, which was first introduced by Kn\"orrer and later generalized by Yoshino, produces a matrix factorization of the sum $f+g$. We will study the tensor product of $d$-fold matrix factorizations, with a particular emphasis on understanding when the construction has a non-trivial direct sum decomposition. As an application of our results, we construct indecomposable maximal Cohen-Macaulay and Ulrich modules over hypersurface domains of a certain form.
    \end{abstract}

\maketitle

\section{Introduction}\label{sec:intro}

    Let $(Q,\mf n)$ be a regular local ring, $f$ a non-zero element of $\mf n^2$, and $z$ an indeterminate. In \cite{knorrer1987cohen}, Kn\"orrer introduced a construction which allowed for the transfer of \textit{maximal Cohen-Macaulay} (MCM) modules between the hypersurface rings $Q/(f)$ and $Q\llbracket z \rrbracket/(f+z^2)$. The construction, which is most easily stated in terms of matrix factorizations and is now commonly referred to as the tensor product of matrix factorizations, has had far-reaching applications. In particular, the careful analysis of the functor described by Kn\"orrer is at the heart of their famous periodicity theorem. 

    Kn\"orrer's functor is defined as follows: given a matrix factorization $(\phi,\psi)$ of $f$, that is, a pair of $n\times n$ square matrices with entries in $Q$ satisfying 
        \[\phi\psi = f\cdot I_n = \psi\phi,\]
    Kn\"orrer defines a new matrix factorization of $f+z^2$ given by the pair of $2n \times 2n$ block matrices
        \[(\phi,\psi) \otimes (z,z) \coloneqq \left(\begin{pmatrix}
            zI_n & \phi \\
            \psi & -zI_n
        \end{pmatrix}, \begin{pmatrix}
            zI_n & \phi\\
            \psi & -zI_n
        \end{pmatrix}\right).\]
        Yoshino later gave a generalization in \cite{yoshino1998tensor} which produces a matrix factorization of any sum $f+g$ starting from a factorization of $f$ and a factorization of $g$ using an analogous formula. In Yoshino's language, the above matrix factorization can be viewed as the tensor product of the matrix factorization $(\phi,\psi)$ of $f$ and of the $(1\times 1)$ matrix factorization $(z,z)$ of $z^2$. Yoshino's generalization has found applications in homological mirror symmetry \cite{seidel2010homological}, knot theory \cite{khovanov2008matrix}, and generally in the theory of MCM modules over hypersurface rings and complete intersection rings.

    Our goal is to study a further generalization of Yoshino's and, by extension, Kn\"orrer's construction. Namely, we study the tensor product of $d$-fold matrix factorizations. The techniques and ideas introduced in \cite{yoshino1998tensor} serve as a framework for this article. We lift Yoshino's formulas, and some of their proofs, to the case of $d$-fold matrix factorizations for arbitrary $d \ge 2$. The overall theme remains the same: given a matrix factorization of an element $f$ and another of $g$, the tensor product produces a matrix factorization (with the same number of factors) of the sum $f+g$. 
    
    It is worth noting that a $d$-fold version of Yoshino’s tensor product has appeared in various other forms (several of which actually predate Yoshino's paper); see \cite{childs1978linearizing}, \cite{backelin1986matrix}, \cite{backelin1989ulrich},  \cite{herzog1991linear}, \cite{blaser2017ulrich}, and \cite{hopkins2021n}. The version we describe most closely resembles the tensor product of complexes (really of ``$d$-complexes'') and therefore we feel it serves as a good candidate for a unified definition. We study some of the basic properties of the tensor product, focusing mostly on those that are relevant to our main goal: understanding its decomposability. As it turns out, the tensor product of two indecomposable matrix factorizations often has many non-trivial direct summands, so, understanding when the operation preserves indecomposability is the main theme of Section \ref{sec:disjoint_vars}. We also give one predictable situation in which the tensor product always decomposes in Section \ref{sec:knorrer_decomp}.
    
    Our main application is to the theory of \textit{Ulrich} modules over hypersurface domains. The authors of \cite{herzog1991linear} showed that any hypersurface ring (and more generally, any \textit{strict complete intersection}) has an Ulrich module using a construction essentially equivalent to the $d$-fold tensor product. In Section \ref{sec:Ulrich}, we reproduce their result using elementary properties of the tensor product in the case of a hypersurface domain, and because of the results of Section \ref{sec:disjoint_vars}, we are able to give conditions (and examples) for when the Ulrich modules produced are indecomposable.

\section{Background and notation}\label{sec:prelim}
    
    Let $Q$ be a commutative ring (always assumed to be Noetherian), $d \ge 2$ an integer, and fix an element $f \in Q$. In this section, we provide a brief background on $d$-fold matrix factorizations and set up the notations that will be needed throughout. We also record several facts about the structure of the category of $d$-fold matrix factorizations which will be needed in Section \ref{sec:disjoint_vars}.

    \begin{definition}
        A \textit{matrix factorization} of $f$ (\textit{with $d$ factors}) is a sequence of $Q$-homomorphisms between finitely generated projective $Q$-modules
            \[\begin{tikzcd}
               X = ( X_0 \rar{\phi_0} &X_{d-1} \rar{\phi_{d-1}} &\cdots \rar{\phi_2} &X_1 \rar{\phi_1} &X_0 )
            \end{tikzcd}\] satisfying
            \begin{equation}\label{eq:dMF}
                \phi_i\phi_{i+1}\cdots\phi_0\phi_1\cdots\phi_{i-2}\phi_{i-1} = f\cdot 1_{X_{i-1}}
            \end{equation} for all $i \in \ZZ_d \coloneqq \ZZ/d\ZZ$ (see \eqref{sec:roots_of_1}). When convenient, we will suppress the full sequence and refer to a matrix factorization as an ordered $d$-tuple, $X = (\phi_1,\phi_2,\dots,\phi_{d-1},\phi_0)$. Alternatively, we will view a matrix factorization $X = (X,\phi)$ as a $\ZZ_d$-graded projective $Q$-module, $X = \bigoplus_{k \in \ZZ_d}X_k$, equipped with a degree $-1$ map $\phi: X \to X$ satisfying $\phi^d=f\cdot 1_X$. 
            
            The collection of all $d$-fold matrix factorizations of $f$ forms an additive category which we will denote by $\matfac{Q}{d}{f}$. Namely, if $(X,\phi), (X',\phi') \in \matfac{Q}{d}{f}$, then the direct sum is given by $(X,\phi)\oplus (X',\phi') \coloneqq (X\oplus X', \phi\oplus \phi')$. We will call a non-zero object in $\matfac{Q}{d}{f}$ \textit{indecomposable} if it cannot be decomposed into a direct sum of non-zero matrix factorizations. A morphism $\alpha \in \hom(X,X')$ in $\matfac{Q}{d}{f}$ is a $d$-tuple of $Q$-homomorphisms, $\alpha = (\alpha_0,\alpha_1,\dots,\alpha_{d-1})$, such that $\alpha_{k-1}\phi_k = \phi_k'\alpha_k$ for all $k \in \ZZ_d$. Equivalently, $\alpha: X \to X'$ is a degree zero $Q$-linear map satisfying $\alpha\phi = \phi'\alpha$. More details on the category $\matfac{Q}{d}{f}$ can be found in \cite{hopkins2021n} and \cite{tribone2022matrix}.
    \end{definition}

    If $f$ is a nonzerodivisor and each $X_k$, $k \in \ZZ_d$, is a free $Q$-module, then \eqref{eq:dMF} forces each of the ranks of the free $Q$-modules $X_k$ to be equal \cite[Corollary 5.4]{eisenbud1980homological}. To see this, notice that, for each $k \in \ZZ_d$, the pair $(\phi_k,\phi_{k+1}\phi_{k+2}\cdots \phi_0\phi_1 \cdots \phi_{k-1})$ forms a matrix factorization of $f$ with two factors. Hence Eisenbud's result implies that $\rank_Q X_k = \rank_Q X_{k-1}$ for all $k \in \ZZ_d$. In this case, we say that the matrix factorization $X$ has \textit{rank} $n$ where $n = \rank_Q X_k$ for all $k \in \ZZ_d$.

    The \textit{shift} of a matrix factorization $(X,\phi) \in \matfac{Q}{d}{f}$ will refer to the factorization obtained by cyclically permuting the maps $\phi_k$. We will denote this factorization by
        \[TX = T(\phi_1,\phi_2,\dots,\phi_{d-1},\phi_0) \coloneqq (\phi_2,\phi_3,\dots,\phi_0,\phi_1).\] 
    In particular, for $k \in \ZZ_d$, the degree $k$ piece of $TX$ is $(TX)_k = X_{k+1}$. Similarly, the $i$th shift of $X$ cyclically permutes $X$ $i$ times:
        \[T^iX = (\phi_{i+1},\phi_{i+2},\dots,\phi_0,\phi_1,\dots,\phi_{i-1},\phi_i).\]
    
    We will refrain from using typical suspension notation as the shift operation $T(-)$ does not induce the suspension functor in the triangulated stable category associated to $\matfac{Q}{d}{f}$ unless $d=2$ (see \cite[Section 2]{tribone2022matrix}). 

    \subsection{Notation and conventions}\label{sec:roots_of_1} The main construction we will study is a $\ZZ_d$-graded analogue of the tensor product of complexes (technically of \textit{$d$-complexes}; see \cite[Section 3]{heller2021free}). In this setting, the usual sign convention is replaced by appropriate roots of unity. In order to give the construction in our desired generality, we will assume throughout that the ring $Q$ is a $\ZZ[\zeta]$-algebra where $\zeta$ is a primitive $d^{\textit{th}}$ root of unity in $\CC$. That is, we will assume that there is a (unital) ring homomorphism $\ZZ[\zeta] \to Q$. For instance, if $Q$ is a domain containing a primitive $d^{\textit{th}}$ root of unity (in the usual sense), then such an algebra structure exists. We will abuse notation and write $\zeta \in Q$ to denote the image of $\zeta$ under this homomorphism. 
    
    Due to the cyclic nature of $d$-fold matrix factorization, it will be convenient to take indices modulo $d$. Throughout, the reader should assume that subscripts and other indexing variables are to be interpreted modulo $d$ unless otherwise indicated. In particular, components of a morphism will (almost always) be taken modulo $d$ which we explain below.
        
    If $\alpha: \bigoplus_{j=1}^tM_j \to \bigoplus_{i=1}^sN_i$ is a morphism in an additive category, we use the notation $\alpha(i,j)$ to denote the composition
        \[M_j \hookrightarrow \bigoplus_{j=1}^tM_j \xrightarrow{\, \alpha \,} \bigoplus_{i=1}^sN_i \twoheadrightarrow N_i.\]
    We refer to $\alpha(i,j)$ as the $(i,j)$-component of $\alpha$ with respect to the given direct sum decompositions. In this case, $\alpha$ can be represented by a matrix $\alpha = (\alpha(i,j))_{1\leq i \leq s, 1\leq j \leq t}$. Furthermore, if both of the direct sum decompositions are indexed over $\ZZ_d$ with respect to the ordering $1 \prec 2 \prec \cdots \prec d-1 \prec d=0$, then the components of $\alpha$ are naturally indexed modulo $d$ as well, that is, for any $1\leq i,j\leq d$ and $s,t \in \ZZ$,
        \(\alpha(i+sd,j+td) = \alpha(i,j).\)
    The choice of ordering is simply to index the rows and columns of matrices in the usual way. In other words, as a matrix, $\alpha$ is written as 
        \[(\alpha(i, j))_{i, j \in \ZZ_d} = \begin{pmatrix}
            \alpha(1, 1) &  \alpha(1, 2) & \dots & \alpha(1, d) \\
            \alpha(2, 1) & \alpha(2, 2) & \dots & \alpha(2, d) \\
                  \vdots & \vdots & \ddots & \vdots \\
            \alpha(d, 1) & \alpha(d, 2) & \dots & \alpha(d,d)
        \end{pmatrix}.\]

\subsection{Idempotents in the category of matrix factorizations} 

    Before proceeding, we record two important facts about morphisms of matrix factorizations that will be needed in Section \ref{sec:disjoint_vars}. They both hold in greater generality than the setting of Section \ref{sec:disjoint_vars} so we include them here.
    
    Let $Q$ be a commutative ring, $d \ge 2$, and $f \in Q$. Suppose $(X,\phi) \in \matfac{Q}{d}{f}$ is of rank $n$. An idempotent $e \in \End(X)$ defines a matrix factorization 
        \(e(X) \coloneqq (\image e, \phi\vert_{\image E}) \in \matfac{Q}{d}{f}.\)
    A key aspect to the proofs in Section \ref{sec:disjoint_vars} will be to track the rank of $e(X)$. Here we will show that, when $Q$ is local, idempotents in $\matfac{Q}{d}{f}$ can be put into a normal form which identifies the rank of the matrix factorization $e(X)$.
    
    We will say that a pair of morphisms $\alpha \in \hom(X,X')$ and $\beta \in \hom(Y,Y')$ are \textit{equivalent} if there exists isomorphisms $\gamma$ and $\delta$ such that $\delta\alpha = \beta\gamma$. In this case, we will write $\alpha \sim \beta$. Notice that equivalent idempotents induce isomorphic summands, that is, if $e \sim e'$, then $e(X) \cong e'(X')$. 

    \begin{lemma}\label{thm:equiv_morphisms}
        Let $(X,\phi),(X',\phi')\in \matfac{Q}{d}{f}$ and suppose $\alpha=(\alpha_0,\dots,\alpha_{d-1}) \in \hom(X,X')$. For any $k \in \ZZ_d$, replacing $\alpha_k$ with $A\alpha_k B$, for isomorphisms $A$ and $B$, results in a morphism equivalent to $\alpha$.
    \end{lemma}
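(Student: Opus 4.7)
\textit{Proof plan.} The plan is to absorb the isomorphisms $A$ and $B$ into the structure maps of $X'$ and $X$ near position $k$, thereby producing new matrix factorizations $Y,Y'$ isomorphic to $X,X'$ respectively, so that the tuple with $k$-th component $A\alpha_k B$ is an honest morphism $Y \to Y'$. This is precisely the flexibility the equivalence relation $\sim$ is designed to capture: we modify the source and target without changing the underlying morphism up to isomorphism on either side.

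More concretely, write $B: M \to X_k$ and $A: X'_k \to N$ for the given isomorphisms. I would define $Y$ to coincide with $X$ in every degree except $Y_k := M$, equipped with structure maps $\tilde{\phi}_k := \phi_k B$, $\tilde{\phi}_{k+1} := B^{-1}\phi_{k+1}$, and $\tilde{\phi}_j := \phi_j$ for all other $j$. The verification that $(Y,\tilde{\phi}) \in \matfac{Q}{d}{f}$ amounts to checking \eqref{eq:dMF} at each position: for every cyclic product starting at position $j \neq k+1$, the modified maps $\phi_k B$ and $B^{-1}\phi_{k+1}$ appear consecutively, the $B^{\pm 1}$ cancel, and the product reduces to the corresponding one for $X$; for $j = k+1$ the product reads $B^{-1}(f \cdot 1_{X_k})B = f \cdot 1_M$, as required. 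A symmetric construction produces $Y'$ using $\tilde{\phi}'_k := \phi'_k A^{-1}$ and $\tilde{\phi}'_{k+1} := A\phi'_{k+1}$.

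Next, I would define $u: X \to Y$ by $u_k := B^{-1}$ and $u_j := 1_{X_j}$ for $j \neq k$, and $v: X' \to Y'$ by $v_k := A$ and $v_j := 1_{X'_j}$ for $j \neq k$. Each is an isomorphism in $\matfac{Q}{d}{f}$: the relations $u_{j-1}\phi_j = \tilde{\phi}_j u_j$ are automatic for $j \notin \{k,k+1\}$, and at $j \in \{k,k+1\}$ they hold by the definitions of $\tilde{\phi}_k$ and $\tilde{\phi}_{k+1}$ (and analogously for $v$). Setting $\beta := v\alpha u^{-1}: Y \to Y'$, a componentwise computation gives $\beta_k = A\alpha_k B$ and $\beta_j = \alpha_j$ for $j \neq k$, while the identity $v\alpha = \beta u$ witnesses $\alpha \sim \beta$. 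The only conceptual point is recognizing that a modification at position $k$ forces a compensating modification at position $k+1$ to preserve \eqref{eq:dMF}; once this is in hand, the remainder is routine bookkeeping.
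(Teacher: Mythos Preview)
Your proof is correct and follows essentially the same approach as the paper: both absorb $B$ and $A$ into the structure maps at positions $k$ and $k+1$ to form new factorizations $Y,Y'$, and both exhibit the equivalence via the isomorphisms $(1,\dots,B^{-1},\dots,1)$ and $(1,\dots,A,\dots,1)$. The only cosmetic difference is that you allow $A,B$ to change the underlying module at position $k$, whereas the paper takes them to be automorphisms of $X_k'$ and $X_k$; this generality costs nothing.
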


    \begin{proof}
        Let $k \in \ZZ_d$. For any isomorphisms of $Q$-modules $A:X_k' \to X_k'$ and $B:X_k \to X_k$, we have a commutative diagram
            \[\begin{tikzcd}[column sep=4em]
                X_{k+1} \rar{B^{-1}\phi_{k+1}} \dar{\alpha_{k+1}}  &X_k \rar{\phi_{k}B} \dar{A\alpha_k B} &X_{k-1} \dar{\alpha_{k-1}}\\
                X_{k+1}' \rar{A\phi_{k+1}'} &X_k' \rar{\phi_{k}'A^{-1}} &X_{k-1}'.
            \end{tikzcd}\]
        Let $Y=(\phi_1,\dots,\phi_{k}B,B^{-1}\phi_{k+1},\dots,\phi_0)$ and $Y' = (\phi_1',\dots,\phi_{k}'A^{-1},A\phi_{k+1}',\dots,\phi_0)$. Then $Y,Y' \in \matfac{Q}{d}{f}$ and the commutative diagram above shows that 
            \[\beta = (\alpha_0,\alpha_1,\dots,A\alpha_kB,\dots,\alpha_{d-1}):Y \to Y'\] 
        is a morphism of matrix factorizations. Now, if we let $\gamma = (1_{X_0},\dots,B^{-1},\dots,1_{X_{d-1}})$ and $\delta = (1_{X_0'},\dots,A,\dots,1_{X_{d-1}'})$ we have that $\delta\alpha = \beta\gamma$ implying that $\alpha$ and $\beta$ are equivalent.
    \end{proof}

    The next lemma provides a normal form for idempotents in $\matfac{Q}{d}{f}$. In particular, the lemma shows that idempotents split in the additive category $\matfac{Q}{d}{f}$.
    
    \begin{lemma}\label{thm:idempotent_normal_form}
       Assume that $(Q,\mf n)$ is local and $f \in \mf n$ is a nonzerodivisor in $Q$. If $X\in \matfac{Q}{d}{f}$ is of rank $n$ and $e=(e_0,\dots,e_{d-1})\in \End(X)$ is an idempotent, then there exists an integer $0\leq r \leq n$ such that $e$ is equivalent to a morphism $(\epsilon,\epsilon,\dots,\epsilon)$ where
            \[\epsilon = \begin{pmatrix}
                I_r &0\\
                0 &0
                \end{pmatrix}: Q^r\oplus Q^{n-r} \to Q^r\oplus Q^{n-r}.\]
        In particular, $e(X)$ is of rank $r$, $(1-e)(X)$ is of rank $n-r$, and $X \cong e(X) \oplus (1-e)(X)$.
    \end{lemma}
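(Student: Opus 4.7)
The plan is to diagonalize each component $e_k$ separately, then show the diagonal sizes are forced to coincide, and finally invoke Lemma \ref{thm:equiv_morphisms} to assemble a common normal form.

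First, I would handle each $e_k: X_k \to X_k$ in isolation. Since $Q$ is local and $X_k \cong Q^n$ is free, the idempotent $e_k$ splits $X_k = \image e_k \oplus \ker e_k$ as a direct sum of finitely generated projective, hence free, $Q$-modules. Setting $r_k = \rank_Q \image e_k$ and choosing bases adapted to this splitting produces an automorphism $A_k$ of $X_k$ with $A_k^{-1}e_kA_k = \operatorname{diag}(I_{r_k},0)$.

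Next, I would show $r_0=r_1=\cdots=r_{d-1}$. The idempotent relations $e_{k-1}\phi_k = \phi_k e_k$ guarantee that $(\image e_0,\ldots,\image e_{d-1})$ with the restricted maps $\phi_k|_{\image e_k}$ assembles into an object $e(X) \in \matfac{Q}{d}{f}$ whose $k$-th component is free of rank $r_k$. Because $f$ is a nonzerodivisor, the rank-equality remark recalled just before the lemma (derived from \cite[Corollary 5.4]{eisenbud1980homological}) forces all the $r_k$ to be equal to a common value $r$. This is the step I expect to be the main obstacle, since it is the only place where the matrix factorization structure (as opposed to a degreewise statement) is essential, and it requires knowing projectives are free over the local ring $Q$ in order to even speak of ranks.

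With common rank $r$ in hand, I would finish by applying Lemma \ref{thm:equiv_morphisms} successively for each $k \in \ZZ_d$ using the pair $(A,B) = (A_k^{-1}, A_k)$. Each application replaces the $k$-th component of $e$ by $A_k^{-1}e_kA_k = \epsilon$ without changing the equivalence class of the morphism, so after $d$ applications the resulting morphism is $(\epsilon,\epsilon,\ldots,\epsilon)$ and equivalent to $e$. The ``in particular'' statements then follow immediately: equivalent idempotents induce isomorphic summands, so $e(X)$ has the same rank as $\epsilon(Q^n)= Q^r$ in each degree, and similarly $(1-e)(X)$ has rank $n-r$; the degreewise splittings $X_k = \image e_k \oplus \image(1-e_k)$ are compatible with the differentials and thus assemble to a direct sum decomposition $X \cong e(X)\oplus(1-e)(X)$ in $\matfac{Q}{d}{f}$.
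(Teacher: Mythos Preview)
Your proposal is correct and follows the same overall strategy as the paper: diagonalize each component idempotent and then show the ranks $r_k$ coincide. The one substantive difference is in how you establish $r_0=\cdots=r_{d-1}$. You observe directly that $e(X)$, with modules $\image e_k$ and restricted differentials, is already an object of $\matfac{Q}{d}{f}$ with free components (since $Q$ is local), and then invoke the rank-equality remark stated just before the lemma. The paper instead first puts $e$ in block-diagonal form via Lemma~\ref{thm:equiv_morphisms}, then for each $i$ writes the $2$-fold factorization $(\phi_i,\phi_{i+1}\cdots\phi_{i-1})$ in $2\times 2$ block form with respect to $Q^{r_k}\oplus Q^{n-r_k}$, uses the commutation $e_{i-1}\phi_i=\phi_ie_i$ to force the off-diagonal blocks to vanish, and finally applies \cite[Corollary~5.4]{eisenbud1980homological} to the surviving diagonal blocks to conclude $r_{i-1}=r_i$. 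Your route is shorter and more conceptual, avoiding the block computation entirely; the paper's version makes explicit exactly which matrix identities are doing the work. Either way, the order in which you apply Lemma~\ref{thm:equiv_morphisms} versus prove rank equality is immaterial.
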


    \begin{proof}
        If $e=0$ or $e=1$, there is nothing to prove so assume $e\neq 0,1$. Since $e$ is an idempotent of matrix factorizations, each component satisfies $e_k^2 = e_k$, that is, the $Q$-homomorphism $e_k:X_k \to X_k$ is an idempotent for each $k \in \ZZ_d$. Therefore, there exists isomorphisms $A_k,B_k$ such that
            \[A_ke_k B_k = \begin{pmatrix}
                I_{r_k}&0\\
                0 &0
            \end{pmatrix}:Q^{r_k}\oplus Q^{n-r_k} \to Q^{r_k}\oplus Q^{n-r_k}\] 
        for some $0 < r_k < n$. Applying Lemma \ref{thm:equiv_morphisms} to each $e_k$, $k \in \ZZ_d$, we may assume that
                \[e = \left(\begin{pmatrix}
                    I_{r_0} &0\\
                    0 &0
                \end{pmatrix}, \begin{pmatrix}
                    I_{r_1} &0\\
                    0 &0
                \end{pmatrix},\dots ,\begin{pmatrix}
                    I_{r_{d-1}} &0\\
                    0 &0
                \end{pmatrix}\right)\] 
        for integers $0<r_0,r_1,\dots,r_{d-1} < n$, and that $\phi_k : Q^{r_{k}}\oplus Q^{n-r_{k}} \to Q^{r_{k-1}}\oplus Q^{n-r_{k-1}}$ for each $k \in \ZZ_d$. 

        To finish the proof, it suffices to show that $r_{i-1} = r_{i}$ for each $i \in \ZZ_d$. Let $i \in \ZZ_d$ and consider the commutative diagram
            \[\begin{tikzcd}[column sep = 5.5em]
                Q^{r_{i-1}}\oplus Q^{n-r_{i-1}} \rar{\phi_{i+1}\phi_{i+2}\cdots\phi_{i-1}} \dar{e_{i-1}} &Q^{r_{i}}\oplus Q^{n-r_{i}} \dar{e_{i}} \rar{\phi_i} &Q^{r_{i-1}}\oplus Q^{n-r_{i-1}} \dar{e_{i-1}}\\
                Q^{r_{i-1}}\oplus Q^{n-r_{i-1}} \rar{\phi_{i+1}\phi_{i+2}\cdots\phi_{i-1}} &Q^{r_{i}}\oplus Q^{n-r_{i}} \rar{\phi_i} &Q^{r_{i-1}}\oplus Q^{n-r_{i-1}}.
            \end{tikzcd}\] 
        Decompose $\phi_i$ and $\phi_{i+1}\phi_{i+2}\cdots\phi_{i-1}$ with respect to these direct sum decompositions into
            \[\phi_i = \begin{pmatrix}
                A &B\\
                C &D
            \end{pmatrix} \quad \text{ and } \quad \phi_{i+1}\phi_{i+2}\cdots\phi_{i-1} =\begin{pmatrix}
                A' &B'\\
                C' &D'
            \end{pmatrix}.\] 
        Since $(\phi_i,\phi_{i+1}\phi_{i+2}\cdots\phi_{i-1}) \in \matfac{Q}{2}{f}$, we have that \[\begin{pmatrix}
                        A &B\\
                        C &D
                    \end{pmatrix}
                    \begin{pmatrix}
                        A' &B'\\
                        C' &D'
                    \end{pmatrix} = \begin{pmatrix}
                        f\cdot I_{r_{i-1}} &0\\
                        0 & f\cdot I_{n-r_{i-1}}
                    \end{pmatrix}\] and \[\begin{pmatrix}
                        A' &B'\\
                        C' &D'
                    \end{pmatrix}\begin{pmatrix}
                        A &B\\
                        C &D
                    \end{pmatrix} = \begin{pmatrix}
                        f\cdot I_{r_{i}} &0\\
                        0 & f\cdot I_{n-r_{i}}
                    \end{pmatrix}.\]
        Using the fact that $e_{i-1}\phi_i = \phi_i e_{i}$, we can conclude that $B=0 $ and $C=0$. Similarly, the commutativity of the left square implies that $B'=0 $ and  $C' = 0$. Thus, $AA' = f\cdot I_{r_{i-1}}$ and $A'A = f \cdot I_{r_{i}}$ which is only possible if $r_{i-1} = r_{i}$ (see \cite[Corollary 5.4]{eisenbud1980homological}).

        The final statement follows by decomposing each $\phi_i, i \in \ZZ_d$, along the direct sum decomposition $\phi_i:Q^r \oplus Q^{n-r} \to Q^r \oplus Q^{n-r}$ where $r$ is the common value $r=r_0=r_1=\cdots=r_{d-1}$.
    \end{proof}

    We finish this section with a straightforward lemma that will also be useful in Section \ref{sec:disjoint_vars}. 

    \begin{lemma}\label{thm:scaled_by_units}
        Let $c_1,c_2,\dots,c_{d-1},c_0$ be elements of $Q$ such that $c_1c_2\dots c_{d-1}c_0 = 1$. Then, for any $(X,\phi) \in \matfac{Q}{d}{f}$, $X$ is isomorphic to the factorization 
            \[(c_1\phi_1,c_2\phi_2,\dots,c_{d-1}\phi_{d-1},c_0\phi_0) \in \matfac{Q}{d}{f}.\]
        In particular, if $c \in Q$ satisfies $c^d = 1$, then $cX \coloneqq (c\phi_1,\dots,c\phi_0) \cong X$. \qed
    \end{lemma}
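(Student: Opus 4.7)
The plan is to construct an explicit scalar isomorphism between the two factorizations. Write $(X',\phi') = (c_1\phi_1,c_2\phi_2,\dots,c_0\phi_0)$, which lies in $\matfac{Q}{d}{f}$ because $\phi_k'\phi_{k+1}'\cdots \phi_{k-1}' = (c_kc_{k+1}\cdots c_{k-1})\phi_k\phi_{k+1}\cdots\phi_{k-1} = f\cdot 1_{X_{k-1}}$ using the hypothesis $c_1c_2\cdots c_0 = 1$. Note also that each $c_k$ is automatically a unit of $Q$, since the product relation gives $c_k\cdot\bigl(\prod_{j\ne k}c_j\bigr)=1$. The goal is to find units $v_0,v_1,\dots,v_{d-1}\in Q^{\times}$ so that the componentwise scalar multiplications $\alpha_k = v_k\cdot 1_{X_k}$ assemble into a morphism $\alpha:X\to X'$.

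The commutativity condition $\alpha_{k-1}\phi_k = \phi_k'\alpha_k = c_k\phi_k\alpha_k$ reduces (since each $\alpha_k$ is a scalar) to the purely numerical recursion $v_{k-1} = c_kv_k$ for every $k\in\ZZ_d$. I would solve this by setting $v_0 = 1$ and then defining
\[
v_k \coloneqq (c_1c_2\cdots c_k)^{-1} \qquad (k=1,\dots,d-1).
\]
The recursion $v_{k-1} = c_kv_k$ is immediate for $1\le k \le d-1$, and the remaining cyclic instance $v_{d-1} = c_0v_0 = c_0$ reads $(c_1c_2\cdots c_{d-1})^{-1} = c_0$, which is exactly the hypothesis $c_1c_2\cdots c_{d-1}c_0 = 1$. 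Thus all the compatibility squares commute, so $\alpha = (v_0\cdot 1_{X_0},\dots,v_{d-1}\cdot 1_{X_{d-1}})$ defines a morphism in $\matfac{Q}{d}{f}$, and since every $v_k$ is a unit, each $\alpha_k$ is invertible, giving an isomorphism $X\cong X'$.

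For the final assertion, taking $c_1=c_2=\cdots=c_0 = c$ with $c^d=1$ satisfies the hypothesis, so $cX \cong X$ is a direct specialization. There is essentially no obstacle here; the only point requiring a moment of care is the bookkeeping of indices modulo $d$, in particular ensuring the cyclic closure $v_{d-1}=c_0v_0$ is consistent with the defining product relation, which is precisely what the hypothesis guarantees.
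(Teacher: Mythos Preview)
Your argument is correct; the paper itself omits the proof entirely (the statement ends with a bare \qed), and the explicit scalar isomorphism you construct is exactly the straightforward verification the authors have in mind.
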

    
\section{The tensor product}\label{sec:tensor_prod}

    In this section, we define the tensor product of $d$-fold matrix factorizations and prove some of the basic properties that will be needed going forward. Our definition closely follows the presentation given in \cite{hopkins2021n} with minor modifications. Throughout this section, let $Q$ be commutative ring, $d \ge 2$ an integer, and assume that $Q$ is a $\ZZ[\zeta]$-algebra where $\zeta \in \CC$ is a primitive $d^{\textit{th}}$ root of unity. We also fix elements $f$ and $g$ in $Q$.

    \begin{definition}
         For matrix factorizations $X = (X,\phi) \in \matfac{Q}{d}{f}$ and $Y = (Y,\psi) \in \matfac{Q}{d}{g}$ we define the \textit{tensor product} $X \wotimes{\zeta} Y = (X \wotimes{\zeta} Y,\Phi) \in \matfac{Q}{d}{f+g}$ as follows. The underlying $\ZZ_d$-graded projective $Q$-module is given by $X \wotimes{\zeta} Y = \bigoplus_{k \in \ZZ_d}(X \wotimes{\zeta} Y)_k$ where
            \[(X \wotimes{\zeta} Y)_k = \bigoplus_{i+j \equiv k \mod d} X_i \otimes_Q Y_j.\] 
        For homogeneous elements $x \in X$ and $y \in Y$, the map $\Phi$ is given by
            \[\Phi(x\otimes y) = \phi(x) \otimes y + \zeta^{|x|}x\otimes \psi(y)\]
        where $|x|$ denotes the degree of the element $x \in X$, i.e. $x \in X_i$ if and only if $|x| = i$. 
        
        In general, the isomorphism class of $X \wotimes{\zeta} Y$ depends on the choice of root of unity, $\zeta \in Q$ (for instance, see Example \ref{ex:non_commutatitve_otimes}). However, when the root of unity is fixed, we will omit $\zeta$ from the notation.
    \end{definition}
    
    This construction has appeared in various similar forms in \cite{childs1978linearizing}, \cite{backelin1986matrix}, \cite{backelin1989ulrich},  \cite{herzog1991linear}, \cite{blaser2017ulrich}, and \cite{hopkins2021n}. Each version has the same goal: to produce a matrix factorization of $f+g$ from a factorization of $f$ and a factorization of $g$. The techniques in each of the articles listed above are quite distinct, though some of the constructions can be identified with each other up to a choice of roots of unity and organization of the matrices (for example, see Proposition \ref{thm:otimes_alternate}). Our definition closely follows \cite[Section 2.5]{hopkins2021n} except for a slightly different approach to the needed roots of unity. The proof given by Hopkins explicitly uses the fact that $\zeta \in Q$ is a \textit{distinguished}\footnote{A \textit{distinguished} primitive $d^{\textit{th}}$ root of unity is an element $\zeta \in Q$ such that the $\sum_{i=0}^{n-1} \zeta^i$ is invertible in $Q$ for all $1 \leq n \leq d-1$ and zero for $n=d$. For example, $i \in \ZZ[i]$ is a primitive fourth root of unity but is not distinguished since $1+i$ is not invertible in $\ZZ[i]$.} primitive $d^{\textit{th}}$ root of unity so we are unable to transfer their proof to our setting. However, with slight modification, the proof of \cite[Proposition 2.2]{blaser2017ulrich} applies in our generality. 

    \begin{proposition}\cite{blaser2017ulrich,hopkins2021n}
        Let $X \in \matfac{Q}{d}{f}$ and $Y \in \matfac{Q}{d}{g}$. Then $X \wotimes{} Y$ defines a matrix factorization of $f+g$ in $\matfac{Q}{d}{f+g}$.
    \end{proposition}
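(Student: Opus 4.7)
The plan is to verify directly that $\Phi^d = (f+g)\cdot 1_{X\wotimes{\zeta} Y}$ by writing $\Phi$ as the sum of two operators that $\zeta$-commute and invoking the $q$-binomial theorem. On homogeneous simple tensors, decompose $\Phi = L + R$ where
\[L(x\otimes y) = \phi(x)\otimes y \qquad \text{and} \qquad R(x\otimes y) = \zeta^{|x|}\, x\otimes \psi(y);\]
both extend to degree $-1$ $Q$-linear endomorphisms of $X\wotimes{\zeta}Y$. Three short calculations supply the structural facts I need. First, because $L$ touches only the left tensor factor, $L^d(x\otimes y) = \phi^d(x)\otimes y = f\cdot(x\otimes y)$. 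Second, since $R$ does not alter $|x|$, an easy induction gives $R^k(x\otimes y) = \zeta^{k|x|}x\otimes \psi^k(y)$, so $R^d(x\otimes y) = \zeta^{d|x|}x\otimes\psi^d(y) = g\cdot(x\otimes y)$ using $\zeta^d = 1$. Third, $LR(x\otimes y) = \zeta^{|x|}\phi(x)\otimes\psi(y)$ while $RL(x\otimes y) = \zeta^{|\phi(x)|}\phi(x)\otimes\psi(y) = \zeta^{|x|-1}\phi(x)\otimes\psi(y)$, giving the $\zeta$-commutation relation $LR = \zeta\,RL$.

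With these identities, I would appeal to the Gaussian (or $q$-)binomial theorem in its abstract form: in the $\ZZ[q]$-algebra freely generated by two symbols subject to $LR = q\cdot RL$, one has
\[(L+R)^d = \sum_{k=0}^{d} \binom{d}{k}_{q} L^k R^{d-k},\]
where $\binom{d}{k}_q \in \ZZ[q]$ is the usual Gaussian binomial coefficient; this is proved by induction on $d$ via the $q$-Pascal relation. Specializing $q = \zeta$ and using the $\ZZ[\zeta]$-algebra structure on $Q$, the same identity holds for our $L$ and $R$. The final ingredient is the standard cyclotomic fact that $\binom{d}{k}_\zeta = 0$ in $\ZZ[\zeta]$ for every $0 < k < d$: from the product formula $\binom{d}{k}_\zeta = \prod_{j=1}^{k}\frac{1 - \zeta^{d-j+1}}{1 - \zeta^j}$, the $j = 1$ numerator is $1 - \zeta^d = 0$ while the denominators $1 - \zeta^j$ for $1 \leq j \leq k < d$ are nonzero in $\ZZ[\zeta]$ since $\zeta$ is a primitive $d^{\text{th}}$ root of unity. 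Combining everything yields $\Phi^d = (L+R)^d = L^d + R^d = (f+g)\cdot 1_{X\wotimes{\zeta}Y}$.

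The main obstacle is less computational than conceptual: keeping track of the $\zeta^{|x|}$ factors carefully, and, more importantly, arguing the vanishing of the middle $q$-binomial coefficients without invoking the ``distinguished'' root-of-unity property used in \cite{hopkins2021n}. The advantage of routing through the Gaussian binomial theorem is that all of the cancellation in the cross terms is packaged inside $\ZZ[\zeta]$, so one only needs the relations there and can then pull them back along the structure map $\ZZ[\zeta] \to Q$; no invertibility of partial geometric sums $1 + \zeta + \cdots + \zeta^{n-1}$ in $Q$ is required, which is precisely what allows the argument to work in the generality stated.
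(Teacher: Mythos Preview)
Your proof is correct and self-contained. The paper takes a different, more indirect route: it observes that the factorization $x^d - 1 = \prod_{i=0}^{d-1}(x - \zeta^i)$, valid in $\ZZ[\zeta][x]$, pushes forward along the structure map to $Q[x]$, so the elementary symmetric functions of $1,\zeta,\dots,\zeta^{d-1}$ vanish in $Q$ in degrees $1,\dots,d-1$; it then defers to the proof of \cite[Proposition~2.2]{blaser2017ulrich}, which expands $\Phi^d$ and uses precisely these vanishings to kill the cross terms. Your $q$-binomial argument packages the same cancellation more transparently: the decomposition $\Phi = L + R$ with $LR = \zeta RL$ reduces everything to the single identity $\binom{d}{k}_\zeta = 0$ for $0<k<d$, and your observation that this vanishing already holds in the integral domain $\ZZ[\zeta]$ before pushing to $Q$ is exactly what sidesteps the distinguished-root hypothesis of \cite{hopkins2021n}. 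One cosmetic caution: with the commutation direction $LR = \zeta RL$, the standard form of the Gaussian binomial theorem produces coefficients $\binom{d}{k}_{\zeta^{-1}}$ rather than $\binom{d}{k}_{\zeta}$ in front of $L^kR^{d-k}$ (check $d=2$); since $\zeta^{-1}$ is equally a primitive $d$th root this does not affect the argument, but you should state which convention you are using.
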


        \begin{proof}
            Let $\rho : \ZZ[\zeta] \to Q$ denote the ring homomorphism defining the $\ZZ[\zeta]$-algebra structure on $Q$ and extend this homomorphism to $\rho: (\ZZ[\zeta])[x] \to Q[x]$ by setting $\rho(x) = x$. Notice that $x^d-1 = \rho(x^d-1) = \prod_{i=0}^{d-1}(x-\rho(\zeta)^i)$ in $Q[x]$. Hence the coefficient of $x^k$ on the right is zero for each $0 < k < d$. By picking $z_i = \rho(\zeta)^i$, $0 \leq i \leq d-1$, one can now follow the proof of \cite[Proposition 2.2]{blaser2017ulrich} with minor adjustments to conclude that $X \wotimes{} Y \in \matfac{Q}{d}{f+g}$.
        \end{proof} 

   The operation $( - ) \wotimes{} ( - ) : \matfac{Q}{d}{f} \times \matfac{Q}{d}{g} \to \matfac{Q}{d}{f+g}$ is functorial in both arguments. In particular, if $\alpha : X \to X'$ is a morphism in $\matfac{Q}{d}{f}$, then, for homogeneous $x\in X$ and $y \in Y$,
        \((\alpha \otimes 1_Y )(x\otimes y) = \alpha(x) \otimes y\)
    forms a morphism $\alpha \otimes 1_Y: X \wotimes{} Y \to X' \wotimes{} Y$ in $\matfac{Q}{d}{f+g}$. Similarly, if $\beta: Y \to Y'$ is a morphism in $\matfac{Q}{d}{g}$, then $1_X \wotimes{} \beta : X \otimes Y \to X \wotimes{} Y'$ defines a morphism in $\matfac{Q}{d}{f+g}$.
    
    It will often be useful to consider the components of the tensor product with respect to a given direct sum decomposition. For each $k \in \ZZ_d$, we will view $\Phi_k$ with respect to the direct sum decompositions $\Phi_k : \bigoplus_{j = 1}^{d} X_{j-1}\otimes Y_{k+1-j} \to \bigoplus_{i = 1}^{d} X_{i-1}\otimes Y_{k-i}$. For $1\leq i,j\leq d$, the $(i,j)$-component of $\Phi_k$ is given by
        \begin{equation}\label{eq:tensor}
            \Phi_k(i,j) = \begin{cases}
            \zeta^{i-1} 1_{X_{i-1}} \otimes \psi_{k+1-i} & \text{ if } j \equiv i \mod d\\
            \phi_i \otimes 1_{Y_{k-i}} & \text{ if } j \equiv i+1 \mod d\\
            0 & \text{ otherwise }
            \end{cases}.
        \end{equation}

    \begin{example}
        Let $Q = \Bbbk\llbracket x_i,y_i,z_i : i \in \ZZ_3 \rrbracket$ where $\Bbbk$ is a field containing a primitive third root of unity, $\zeta \in \Bbbk$. Consider the rank one matrix factorizations $X = (x_1,x_2,x_0) \in \matfac{Q}{3}{x_1x_2x_0},$ $Y = (y_1,y_2,y_0) \in \matfac{Q}{3}{y_1y_2y_0}$, and $Z = (z_1,z_2,z_0) \in \matfac{Q}{3}{z_1z_2z_0}$. Then $X \wotimes{\zeta} Y \in \matfac{Q}{3}{x_1x_2x_0 + y_1y_2y_0}$ is the triple of $3 \times 3$ matrices
            \[(A_1,A_2,A_0) = \left( \begin{pmatrix}
                y_1 & x_1 & 0 \\
                0 & \zeta y_0 & x_2\\
                x_0 & 0 & \zeta^2 y_2
            \end{pmatrix},\begin{pmatrix}
                y_2 & x_1 & 0 \\
                0 & \zeta y_1 & x_2\\
                x_0 & 0 & \zeta^2 y_0
            \end{pmatrix},\begin{pmatrix}
                y_0 & x_1 & 0 \\
                0 & \zeta y_2 & x_2\\
                x_0 & 0 & \zeta^2 y_1
            \end{pmatrix}\right) .\]
        If we tensor this with $Z$, we get a rank $9$ factorization of the sum $f = x_1x_2x_0 + y_1y_2y_0 + z_1z_2z_0$, that is, $(X\wotimes{\zeta} Y) \wotimes{\zeta} Z \in \matfac{Q}{3}{f}$ is given by triple of matrices
            \[(B_1,B_2,B_0) = \left( \begin{pmatrix}
                z_1I_3 & A_1 & 0 \\
                0 & \zeta z_0I_3 & A_2\\
                A_0 & 0 & \zeta^2 z_2I_3
            \end{pmatrix},\begin{pmatrix}
                z_2I_3 & A_1 & 0 \\
                0 & \zeta z_1I_3 & A_2\\
                A_0 & 0 & \zeta^2 z_0I_3
            \end{pmatrix},\begin{pmatrix}
                z_0I_3 & A_1 & 0 \\
                0 & \zeta z_2I_3 & A_2\\
                A_0 & 0 & \zeta^2 z_1I_3
            \end{pmatrix}\right)\]
        where $I_3$ is a $3\times 3$ identity matrix. In Sections \ref{sec:disjoint_vars} and \ref{sec:Ulrich} we show that both of these matrix factorizations are indecomposable and that each of the corresponding cokernel modules are \textit{indecomposable Ulrich modules} over the appropriate hypersurface ring.
    \end{example}

    \subsection{Properties of the tensor product}

    As above, we let $d \ge 2$ be an integer and $Q$ be a commutative $\ZZ[\zeta]$-algebra where $\zeta \in \CC$ is a primitive $d^{\textit{th}}$ root of unity. We also fix elements $f,g \in Q$. Our first observation is on the symmetry of the tensor product operation. Note that since $\ZZ[\zeta] = \ZZ[\zeta^{-1}]$, we can also take the tensor product with respect to $\zeta^{-1}$.

    \begin{proposition}\label{thm:otimes_alternate}
        Let $X \in \matfac{Q}{d}{f}$ and $Y \in \matfac{Q}{d}{g}$. Then there is a natural isomorphism of matrix factorizations $X\wotimes{\zeta} Y \cong Y \wotimes{\zeta^{-1}} X$ in $\matfac{Q}{d}{f+g}$.
    \end{proposition}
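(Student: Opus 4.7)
The plan is to exhibit an explicit isomorphism given by a twisted version of the braiding map $x \otimes y \mapsto y \otimes x$. Define a $\ZZ_d$-graded $Q$-linear map
\[\tau : X \wotimes{\zeta} Y \longrightarrow Y \wotimes{\zeta^{-1}} X, \qquad \tau(x \otimes y) = \zeta^{|x| \cdot |y|}\, y \otimes x,\]
on homogeneous elements, extended $Q$-linearly. Since $|x \otimes y| = |x| + |y| = |y \otimes x|$, the map $\tau$ is homogeneous of degree zero; it is bijective with inverse $y \otimes x \mapsto \zeta^{-|y| \cdot |x|}\, x \otimes y$ (the exponent is well defined modulo $d$ because $\zeta^d = 1$ in $Q$).

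The main step is to verify that $\tau$ commutes with the differentials. Writing $\Phi$ and $\Psi$ for the differentials on $X \wotimes{\zeta} Y$ and $Y \wotimes{\zeta^{-1}} X$ respectively, and using that $|\phi(x)| = |x|-1$ and $|\psi(y)| = |y|-1$, a direct calculation on a homogeneous element gives
\begin{align*}
\tau(\Phi(x \otimes y)) &= \tau(\phi(x) \otimes y) + \zeta^{|x|}\, \tau(x \otimes \psi(y)) \\
&= \zeta^{(|x|-1)|y|}\, y \otimes \phi(x) + \zeta^{|x| + |x|(|y|-1)}\, \psi(y) \otimes x \\
&= \zeta^{|x||y| - |y|}\, y \otimes \phi(x) + \zeta^{|x||y|}\, \psi(y) \otimes x,
\end{align*}
which coincides with $\Psi(\tau(x \otimes y)) = \zeta^{|x||y|}\, \psi(y) \otimes x + \zeta^{|x||y| - |y|}\, y \otimes \phi(x)$. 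Hence $\tau$ is a morphism, and therefore an isomorphism, in $\matfac{Q}{d}{f+g}$. Naturality in both arguments is immediate from the formula together with the fact that morphisms of matrix factorizations preserve the $\ZZ_d$-grading.

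The only non-routine step is guessing the scaling coefficient $c(i,j) = \zeta^{ij}$. It is forced (up to the normalization $c(0,0) = 1$) by the two recursions $c(i,j) = \zeta^i c(i,j-1)$ and $c(i-1,j) = \zeta^{-j} c(i,j)$, obtained by matching coefficients of $\psi(y) \otimes x$ and $y \otimes \phi(x)$ in the desired identity $\tau \Phi = \Psi \tau$. Once this coefficient is posited, the rest is the direct verification above.
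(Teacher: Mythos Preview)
Your proof is correct and essentially identical to the paper's: both define the isomorphism by $x\otimes y \mapsto \zeta^{|x||y|}\, y\otimes x$ and verify directly that it intertwines the differentials. Your additional remark on how the coefficient $\zeta^{ij}$ is forced by the two recursions is a nice touch not in the paper, but the core argument is the same.
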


    \begin{proof}
        Define a map $\alpha: (X\wotimes{\zeta} Y,\Phi) \to (Y \wotimes{\zeta^{-1}} X, \Phi')$ given by
        \(\alpha(x \otimes y) = \zeta^{|x||y|}y\otimes x\) for homogeneous elements $x \in X$ and $y \in Y$. Then $\alpha$ is invertible and satisfies
            \begin{align*}
                \alpha\Phi(x \otimes y) 
                &= \zeta^{|\phi(x)||y|}y \otimes \phi(x) + \zeta^{|x|}(\zeta^{|x||\psi(y)|} \psi(y) \otimes x)\\
                &= \zeta^{(|x| - 1)|y|}y \otimes \phi(x) + \zeta^{|x|(1 + |\psi(y)|)}\psi(y) \otimes x\\
                &= \zeta^{-|y|}\left( \zeta^{|x||y|}y \otimes \phi(x)\right) + \zeta^{|x||y|}\psi(y) \otimes x\\
                &= \Phi'\alpha(x \otimes y)
            \end{align*}
        for any homogeneous $x \in X$ and $y \in Y$. This implies that $X \wotimes{\zeta} Y \cong Y\wotimes{\zeta^{-1}} X$ in $\matfac{Q}{d}{f+g}$. The fact that $\alpha$ induces a natural isomorphism (in both components) can be checked directly from the formula. 
    \end{proof}

    \begin{remark}
        In the case $d=2$, the conclusion of Proposition \ref{thm:otimes_alternate} amounts to the isomorphism $X \wotimes{} Y \cong Y \wotimes{} X$ \cite[Lemma 2.1]{yoshino1998tensor}. However, this is not the conclusion being made in the case that $d > 2$; in general, the tensor product construction with respect to a fixed root of unity is not symmetric when $d > 2$.
    \end{remark}

    \begin{example}\label{ex:non_commutatitve_otimes}
        Let $Q = \Bbbk[x,y,z,u,v,w]$ for a field $\Bbbk$ containing a primitive $3$rd root of unity, $\zeta \in \Bbbk$. Consider the rank one factorizations $X = (x,y,z) \in \matfac{Q}{3}{xyz}$ and $Y = (u,v,w) \in \matfac{Q}{3}{uvw}$. We claim that
        \(X\wotimes{\zeta} Y\) and \(Y \wotimes{\zeta} X\) are not isomorphic in $\matfac{Q}{3}{xyz+uvw}$.
    \end{example}

    \begin{proof}
        Assume that $(X \wotimes{\zeta} Y, \Phi) \cong (Y \wotimes{\zeta} X, \Phi')$, that is, assume that there exist invertible matrices $(\alpha_0, \alpha_1, \alpha_2)$ with entries in $Q$ such that $\alpha_{k-1}\Phi_k = \Phi_k'\alpha_k$ for each $k\in \ZZ_3$. Using the direct sum decomposition given in \eqref{eq:tensor}, we will consider corresponding entries on either side of the equation $\alpha_0\Phi_1 = \Phi_1'\alpha_1$. We will also write $\text{c}(\alpha_k(i,j))$ to denote the constant term of the $(i,j)$th entry of $\alpha_k$. 
        
        Comparing the (1,1) entries of the above equation gives us
            \[\alpha_0(1,1)u + \alpha_0(1,3)z = \alpha_1(1,1)x + \alpha_1(2,1)u.\]
        Equating coefficients of $u$ shows that $\text{c}(\alpha_0(1,1)) = \text{c}(\alpha_1(2,1))$, while the coefficients of $x$ and $z$ give us that $\text{c}(\alpha_0(1,3)) = \text{c}(\alpha_1(1,1))=0$. Similarly, analyzing the other entries, we find that the constant term of each entry of $\alpha_1$ must be zero except possibly the $(1,2),(2,1),$ and $(3,3)$ entries which must satisfy
            \(\text{c}(\alpha_1(1,2)) = \text{c}(\alpha_1(2,1)) = \text{c}(\alpha_1(3,3)).\)
        Repeating this analysis with the second commutative square, $\alpha_1\Phi_2 = \Phi_2'\alpha_2$, we get    
            \(\text{c}(\alpha_1(1,2)) = \zeta \text{c}(\alpha_1(2,1)) = \zeta^2 \text{c}(\alpha_1(3,3)).\)
        The two conclusions are only possible if each constant term is zero. Hence, all entries of $\alpha_1$ have constant term zero contradicting that $\alpha_1$ is invertible.  
    \end{proof}

    Although the tensor product is not symmetric in general, Proposition \ref{thm:otimes_alternate} implies that the functorial properties given below, which we state only for the left component, hold for both components. In particular, the tensor product is additive in both components and associative. We omit the proofs of additivity and associativity as they are straightforward computations utilizing the corresponding properties of $Q$-modules.
    
    \begin{lemma}\,
        \begin{enumerate}[label = (\roman*)]
            \item For any $X, X' \in \matfac{Q}{d}{f}$ and $Y \in \matfac{Q}{d}{g}$, there is a natural isomorphism $(X \oplus X') \wotimes{}  Y \cong (X\wotimes{} Y) \oplus (X' \wotimes{} Y)$.
            \item For any $f,g,h \in Q$ and matrix factorizations $X \in \matfac{Q}{d}{f}, Y \in \matfac{Q}{d}{g},$ and $Z \in \matfac{Q}{d}{h}$, there is an isomorphism $(X \wotimes{} Y) \wotimes{} Z \cong X \wotimes{} (Y \wotimes{} Z)$.
        \end{enumerate}\qed
    \end{lemma}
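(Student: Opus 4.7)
The plan is to handle each part by writing down the obvious candidate isomorphism at the level of underlying $\ZZ_d$-graded $Q$-modules (coming from module-theoretic distributivity/associativity of $\otimes_Q$), then verifying compatibility with the degree $-1$ differentials using the defining formula $\Phi(x\otimes y)=\phi(x)\otimes y + \zeta^{|x|}x\otimes \psi(y)$. Naturality in each argument will be automatic once the structure maps are defined on decomposable tensors, since both constructions are functorial and the candidate isomorphism is defined by a universal formula.

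For (i), I would first observe that in degree $k\in\ZZ_d$ the underlying module of $(X\oplus X')\wotimes{}Y$ is
\[\bigoplus_{i+j\equiv k}(X_i\oplus X'_i)\otimes_Q Y_j,\]
and the standard distributivity isomorphism $(X_i\oplus X'_i)\otimes_Q Y_j\cong (X_i\otimes_Q Y_j)\oplus(X'_i\otimes_Q Y_j)$ of $Q$-modules assembles into a $\ZZ_d$-graded isomorphism $\gamma:(X\oplus X')\wotimes{}Y\to (X\wotimes{}Y)\oplus(X'\wotimes{}Y)$. The verification that $\gamma$ intertwines the differentials reduces to checking on homogeneous elements $(x,x')\otimes y$, where both sides return $(\phi(x)\otimes y,\phi'(x')\otimes y)+\zeta^{|x|}(x\otimes\psi(y),x'\otimes\psi(y))$. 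Naturality in $Y$ (and jointly in $X,X'$) is immediate from the formula.

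For (ii), the natural candidate is the $Q$-linear map $\mu:(X\wotimes{}Y)\wotimes{}Z\to X\wotimes{}(Y\wotimes{}Z)$ defined on homogeneous decomposable tensors by $\mu((x\otimes y)\otimes z)=x\otimes (y\otimes z)$. At the module level this is just the usual associativity of $\otimes_Q$, applied summand by summand to the identifications
\[\bigoplus_{i+j+\ell\equiv k}(X_i\otimes_Q Y_j)\otimes_Q Z_\ell \;\cong\; \bigoplus_{i+j+\ell\equiv k}X_i\otimes_Q(Y_j\otimes_Q Z_\ell),\]
so $\mu$ is an isomorphism of $\ZZ_d$-graded projective $Q$-modules. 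The main point to check is compatibility with the differentials. Expanding the differential of $(X\wotimes{}Y)\wotimes{}Z$ on $(x\otimes y)\otimes z$ and using $|x\otimes y|=|x|+|y|$ gives
\[\phi(x)\otimes y\otimes z \;+\; \zeta^{|x|}\,x\otimes \psi(y)\otimes z \;+\; \zeta^{|x|+|y|}\,x\otimes y\otimes \chi(z),\]
where $\chi$ is the differential of $Z$. Applying instead the differential of $X\wotimes{}(Y\wotimes{}Z)$ to $x\otimes(y\otimes z)$ produces the same three terms after distributing $\zeta^{|x|}$ into the inner differential. Hence $\mu$ is a morphism of matrix factorizations, and naturality follows by inspection.

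I do not anticipate a genuine obstacle; the only subtlety worth flagging is the exponent bookkeeping in the associativity check, where one must remember that the tensor factor $x\otimes y$ carries degree $|x|+|y|$ in order to match the two expansions. Everything else is a direct transcription of standard module-theoretic isomorphisms into the $\ZZ_d$-graded setting.
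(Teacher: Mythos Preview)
Your proposal is correct and matches the paper's approach: the paper omits the proof entirely, stating only that additivity and associativity are ``straightforward computations utilizing the corresponding properties of $Q$-modules,'' which is precisely the strategy you outline. Your treatment of the exponent bookkeeping in part (ii), using $|x\otimes y|=|x|+|y|$, is the one nontrivial check and you handle it correctly.
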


    \begin{lemma}\label{thm:tensor_with_shifts}
        Let $X \in \matfac{Q}{d}{f}$ and $Y \in \matfac{Q}{d}{g}$. Then
            \begin{enumerate}[label = (\roman*)]
                \item \label{thm:tensor_with_shifts1} $TX \wotimes{} Y \cong T(X \wotimes{} Y) = X \wotimes{} TY$;
                \item \label{thm:tensor_with_shifts2} $T^i X \wotimes{} T^{-i}Y \cong X\wotimes{} Y$ for any $i \in \ZZ_d$.
            \end{enumerate}
    \end{lemma}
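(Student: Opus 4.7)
The plan for (i) is to handle the two relations separately: the equality $T(X \wotimes{} Y) = X \wotimes{} TY$ is literal, while the isomorphism $TX \wotimes{} Y \cong T(X \wotimes{} Y)$ requires a non-trivial scalar rescaling.

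For the equality, I would simply unwind the definitions. The degree-$k$ component of either side is $\bigoplus_{i+j \equiv k+1} X_i \otimes_Q Y_j$, with the identification on the right coming from reindexing $Y_{j+1}$ as $(TY)_j$. The two differentials then coincide on the nose because the scalar $\zeta^{|x|}$ appearing in the tensor product differential depends only on the degree of $x$ in $X$, which is unaffected by replacing the second factor $Y$ with $TY$.

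For the isomorphism $TX \wotimes{} Y \cong T(X \wotimes{} Y)$, the underlying graded modules again match on the nose, but the differentials genuinely differ: on $TX \wotimes{} Y$, the cross-term on the summand $X_{i+1} \otimes Y_j$ carries the factor $\zeta^{|x|_{TX}} = \zeta^i$, whereas on $T(X \wotimes{} Y)$ the same term carries $\zeta^{|x|_X} = \zeta^{i+1}$. To reconcile the discrepancy I would define $\alpha : TX \wotimes{} Y \to T(X \wotimes{} Y)$ by scaling the summand $X_{i+1} \otimes Y_j$ by $\zeta^{-j}$, then verify directly that $\alpha$ intertwines the two differentials (the key identity being $\zeta^{-(j-1)} \cdot \zeta^i = \zeta^{-j} \cdot \zeta^{i+1}$). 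Since each scaling is a unit, $\alpha$ is invertible. Alternatively, one can deduce this symbolically using Proposition \ref{thm:otimes_alternate} together with the equality half applied with respect to $\zeta^{-1}$, namely $TX \wotimes{\zeta} Y \cong Y \wotimes{\zeta^{-1}} TX = T(Y \wotimes{\zeta^{-1}} X) \cong T(X \wotimes{\zeta} Y)$.

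Part (ii) then follows by iterating (i): applying $TA \wotimes{} B \cong T(A \wotimes{} B)$ a total of $i$ times pulls shifts off the first factor, yielding $T^i X \wotimes{} T^{-i}Y \cong T^i(X \wotimes{} T^{-i}Y)$; and applying the equality $A \wotimes{} TB = T(A \wotimes{} B)$ a total of $i$ times (or equivalently its inverse form $A \wotimes{} T^{-1}B = T^{-1}(A \wotimes{} B)$) gives $X \wotimes{} T^{-i}Y = T^{-i}(X \wotimes{} Y)$. Composing the two yields $T^i T^{-i}(X \wotimes{} Y) = X \wotimes{} Y$. I expect the main obstacle to be the careful bookkeeping of root-of-unity factors in the isomorphism half of (i); the remaining steps are an unwinding of definitions.
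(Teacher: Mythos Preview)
Your proposal is correct and matches the paper's proof essentially verbatim: the equality in (i) is dispatched by unwinding definitions, the isomorphism is realized by the scaling $x\otimes y \mapsto \zeta^{-|y|}\,x\otimes y$ (your ``scale $X_{i+1}\otimes Y_j$ by $\zeta^{-j}$'' is exactly this map), and (ii) is deduced from (i). Your alternative derivation of the isomorphism via Proposition~\ref{thm:otimes_alternate} is a pleasant extra not in the paper.
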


    \begin{proof}
        First notice that the equality in \ref{thm:tensor_with_shifts1} follows from the definition and \ref{thm:tensor_with_shifts2} follows from \ref{thm:tensor_with_shifts1}. So, it suffices to prove the isomorphism in \ref{thm:tensor_with_shifts1}.

        Define a map $\alpha: (TX \wotimes{} Y,\Phi) \to (T(X\wotimes{} Y),\Phi')$ given by
        \(\alpha(x\otimes y) = \zeta^{-|y|}x \otimes y\) for homogeneous elements $x \in X$ and $y \in Y$. For homogeneous elements $x \in TX$ and $y \in Y$, $\Phi$ is given by
            \[\Phi(x \otimes y) = \phi(x) \otimes y + \zeta^{|x|-1} x \otimes \psi(y).\]
        Therefore,
            \begin{align*}
                \alpha\Phi(x \otimes y) 
                &= \zeta^{-|y|}\phi(x) \otimes y + \zeta^{|x|-1}\zeta^{-|\psi(y)|} x \otimes \psi(y)\\
                &= \zeta^{-|y|}(\phi(x) \otimes y + \zeta^{|x|} x \otimes \psi(y))\\
                &= \zeta^{-|y|}\Phi'(x\otimes y)\\
                & = \Phi'(\alpha(x\otimes y)).
            \end{align*}
        Hence $\alpha$ is an invertible morphism $TX \wotimes{} Y \to T(X\wotimes{} Y)$.
    \end{proof}

    If $(Q,\mf n)$ is local and $f\in \mf n$ is a nonzerodivisor, the category $\matfac{Q}{d}{f}$ is \textit{Frobenius exact} \cite[Section 2]{tribone2022matrix}. The ring $Q$ is assumed to be regular in \cite[Section 2]{tribone2022matrix} but this assumption is not needed for the Frobenius exact structure. The factorizations of the form $\mc P_i \coloneqq T^i(f,1,1,\dots,1) \in \matfac{Q}{d}{f}$, for $i \in \ZZ_d$, give a complete list of the indecomposable projective objects in $\matfac{Q}{d}{f}$. We will call a matrix factorization $X \in \matfac{Q}{d}{f}$ \textit{projective} if it is isomorphic to a direct sum of copies of $\mc P_i$ for $i \in \ZZ_d$, that is, if $X \cong \bigoplus_{i \in \ZZ_d} \mc P_i^{s_i}$, $s_i \ge 0$.

    \begin{lemma}
        Assume that $(Q,\mf n)$ is local, $f,g\in \mf n$ are nonzerodivisors, and that $\zeta \in Q$ is a distinguished primitive $d^{\textit{th}}$ root of unity. If $P$ is a projective matrix factorization in $\matfac{Q}{d}{f}$, then $P \wotimes{} Y$ is a projective matrix factorization in $\matfac{Q}{d}{f+g}$ for any $Y \in \matfac{Q}{d}{g}$.
    \end{lemma}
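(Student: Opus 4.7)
The strategy is to reduce to the case $P = \mc P_0$ and then decompose $\mc P_0 \wotimes Y$ explicitly as a direct sum of projective indecomposables.

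By additivity of the tensor product in the first argument, it suffices to handle $P = \mc P_i$ for a single $i \in \ZZ_d$. Applying Lemma \ref{thm:tensor_with_shifts}\ref{thm:tensor_with_shifts1}, we have $\mc P_i \wotimes Y = T^i\mc P_0 \wotimes Y \cong T^i(\mc P_0 \wotimes Y)$, and since $T\mc P_j = \mc P_{j+1}$, any shift of a projective factorization is again projective. Thus it suffices to show that $\mc P_0 \wotimes Y$ is projective.

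By formula \eqref{eq:tensor}, the differential $\Phi_k$ of $\mc P_0 \wotimes Y$ is a $d \times d$ block matrix with $n \times n$ blocks ($n = \rank Y$) whose $(i,i)$-entry is $\zeta^{i-1}\psi_{k+1-i}$, whose $(1,2)$-entry is $f \cdot I_n$, whose $(i,i+1)$-entry is $I_n$ for $2 \le i \le d-1$, whose $(d,1)$-entry is $I_n$, and whose remaining entries are zero. The plan is to construct explicit invertible graded maps $(\alpha_0,\dots,\alpha_{d-1})$ realizing an isomorphism $\mc P_0 \wotimes Y \cong \bigoplus_{j \in \ZZ_d}\mc P_j^{n}$. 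Concretely, the identity blocks at positions $(i,i+1)$ for $2 \le i \le d-1$ and at $(d,1)$ permit Gaussian-type row and column operations that kill every $\psi$-entry. Using the matrix factorization identities $\psi_i\psi_{i+1}\cdots\psi_{i-1} = g \cdot I_n$, the surviving $f$-block becomes an $(f+g)$-block, while the other $d-1$ diagonal blocks become $I_n$, identifying each $\Phi_k$ (in appropriate coordinates) with the differential of a direct sum of $\mc P_j$'s.

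The main obstacle is that these reductions must be performed coherently across all $d$ differentials $\Phi_0,\dots,\Phi_{d-1}$ simultaneously, since any change of basis on one graded piece affects the two adjacent differentials. The $\alpha_k$ must therefore be designed with the cyclic $\ZZ_d$-grading in mind, and this is where the distinguished primitive root of unity hypothesis enters: certain intermediate matrices arising during the reduction have entries involving cyclotomic partial sums $\sum_{t=0}^{m-1}\zeta^{t}$ for $1 \le m \le d-1$, and these must be invertible in $Q$ for the change-of-basis maps to exist.
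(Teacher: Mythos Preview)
Your approach is genuinely different from the paper's. The paper gives a two-line conceptual argument: the tensor product preserves homotopy equivalences (this is \cite[Proposition 2.5.3]{hopkins2021n}, which is where the distinguished hypothesis is actually used), and an object of $\matfac{Q}{d}{h}$ is projective if and only if it is homotopy equivalent to zero (\cite[Theorem 2.8.3]{hopkins2021n} or \cite[Proposition 2.2.15]{tribone2022matrix}). Since $P$ is projective it is contractible, hence $P \wotimes{} Y$ is contractible, hence projective. No explicit decomposition is ever written down.

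Your reduction to $P = \mc P_0$ via additivity and Lemma~\ref{thm:tensor_with_shifts} is correct, and your description of the block structure of $\Phi_k$ is accurate. The explicit Gaussian-elimination strategy you outline is a legitimate alternative route, and for $d=2$ it goes through cleanly: with $\alpha_k = \begin{psmallmatrix} 1 & -\psi_{k} \\ 0 & 1 \end{psmallmatrix}$ one checks directly that $\alpha_{k-1}\Phi_k\alpha_k^{-1} = \begin{psmallmatrix} 0 & f+g \\ 1 & 0 \end{psmallmatrix}$, and a single row/column swap exhibits $\mc P_0 \wotimes{} Y \cong \mc P_0^n \oplus \mc P_1^n$.

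However, for $d \ge 3$ your proposal stops at exactly the hard part. You correctly identify the obstacle---that the change-of-basis maps $\alpha_k$ must be chosen coherently across the cyclic grading---and you assert that the distinguished hypothesis resolves it via invertibility of the partial sums $\sum_{t=0}^{m-1}\zeta^t$, but you never construct the $\alpha_k$ or verify this claim. The final paragraph is a description of what remains to be done, not a proof that it can be done. If you want to complete this route you must actually exhibit the isomorphism $\mc P_0 \wotimes{} Y \cong \bigoplus_j \mc P_j^n$; as written, the argument is a plausible sketch rather than a proof. The paper's homotopy argument sidesteps this computation entirely, at the cost of importing two external results.
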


    \begin{proof}
        The tensor product $\wotimes{\zeta}$ preserves homotopy equivalences; this is proven in \cite[Proposition 2.5.3]{hopkins2021n}. The conclusion of the lemma follows from this and the fact that an object $P \in \matfac{Q}{d}{h}$ is homotopy equivalent to zero if and only if it is projective (see \cite[Theorem 2.8.3]{hopkins2021n} or \cite[Proposition 2.2.15]{tribone2022matrix} for more details).
    \end{proof}

    Recall that if $(X,\phi)$ is a matrix factorization of a nonzerodivisor $f$ such that $\rank X = n$, then each $X_k$ is a free $Q$-module of rank $n$. In particular, we may view $\phi_k$ as a square matrix with entries in $Q$. Note that if $\rank X = n$ and $\rank Y = m$, then $\rank X\wotimes{} Y = dnm$.
    
    \begin{proposition}\label{thm:det_formula}
        Let $f,g \in Q$ be nonzerodivisors and consider matrix factorizations $X \in \matfac{Q}{d}{f}$ of rank $n$ and $Y \in \matfac{Q}{d}{g}$ of rank $m$. Then $(X\otimes Y,\Phi)\in \matfac{Q}{d}{f+g}$ satisfies $\det \Phi_k = (-1)^{nm(d+1)}(f+g)^{nm}$ for any $k \in \ZZ_d$.
    \end{proposition}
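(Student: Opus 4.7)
The plan is to reduce $\Phi_k$ to block upper-triangular form via elementary row operations and then compute the determinant directly using the matrix factorization relations. Since $g$ is a nonzerodivisor, each $\psi_j$ becomes invertible in the localization $Q[g^{-1}]$ (as $\psi_j\psi_{j+1}\cdots\psi_{j-1} = g\cdot 1_{Y_{j-1}}$), so I may work over $Q[g^{-1}]$, in which each diagonal block $A_i := \zeta^{i-1}\, 1_{X_{i-1}}\otimes \psi_{k+1-i}$ of $\Phi_k$ is invertible. Writing $B_i := \phi_i \otimes 1_{Y_{k-i}}$, formula \eqref{eq:tensor} expresses $\Phi_k$ as a $d\times d$ block matrix with $A_i$ on the diagonal, $B_i$ just above the diagonal, and $B_0$ at position $(d,1)$. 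I would eliminate this wrap-around $B_0$ by a cascade of $d-1$ block row operations: subtract $B_0 A_1^{-1}$ times block row $1$ from block row $d$ (killing $(d,1)$ and introducing $-B_0A_1^{-1}B_1$ at $(d,2)$), then cancel the new term using block row $2$, and continue. After $d-1$ such operations the matrix is block upper triangular with diagonal
\[
A_1,\;A_2,\;\ldots,\;A_{d-1},\;A_d + (-1)^{d-1} B_0A_1^{-1}B_1A_2^{-1}B_2\cdots A_{d-1}^{-1}B_{d-1},
\]
and since elementary row operations preserve the determinant,
\[
\det\Phi_k \;=\; \prod_{i=1}^d \det A_i \cdot \det\bigl(1 + (-1)^{d-1} P\bigr),\qquad P := \prod_{i=0}^{d-1}(A_i^{-1}B_i),
\]
where we identify $A_0 = A_d$.

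The key step is the explicit evaluation of $P$. Each factor $A_i^{-1}B_i = \zeta^{-(i-1)}\phi_i\otimes\psi_{k+1-i}^{-1}$ is a tensor of maps on the $X$- and $Y$-sides, so the composition decouples into three pieces: the $\phi$-part telescopes to $\phi_0\phi_1\cdots\phi_{d-1} = f\cdot 1_{X_{d-1}}$ by the matrix factorization relation at $i=0$; the $\psi$-part telescopes to $\psi_{k+1}^{-1}\psi_k^{-1}\cdots\psi_{k+2}^{-1} = (\psi_{k+2}\psi_{k+3}\cdots\psi_{k+1})^{-1} = g^{-1}\cdot 1_{Y_{k+1}}$ by the matrix factorization relation at $j=k+2$; and the scalar coefficient is $\prod_{i=0}^{d-1}\zeta^{-(i-1)} = \zeta^{-d(d-1)/2}$ (using $\zeta^d = 1$). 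Thus $P$ is the scalar multiple of the identity $\zeta^{-d(d-1)/2}(f/g)\cdot 1$.

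Using the identity $\zeta^{d(d-1)/2} = (-1)^{d-1}$, valid in $\ZZ[\zeta]$ and hence in $Q$ (trivial for $d$ odd, and from $\zeta^{d/2} = -1$ for $d$ even), the factor $(-1)^{d-1}\zeta^{-d(d-1)/2}$ equals $1$, so $\det\bigl(1 + (-1)^{d-1}P\bigr) = ((f+g)/g)^{nm}$. The same identity yields $\prod_{i=1}^d \det A_i = \zeta^{nm\,d(d-1)/2}\,g^{nm} = (-1)^{nm(d-1)}g^{nm}$, so
\[
\det\Phi_k \;=\; (-1)^{nm(d-1)}\,g^{nm} \cdot \frac{(f+g)^{nm}}{g^{nm}} \;=\; (-1)^{nm(d+1)}(f+g)^{nm},
\]
having used $(-1)^{nm(d-1)} = (-1)^{nm(d+1)}$. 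Since both sides lie in $Q$ and $g$ is a nonzerodivisor, this identity in $Q[g^{-1}]$ descends to $Q$.

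The main obstacle will be carefully tracking signs and powers of $\zeta$ through the $d-1$ row operations and the telescoping composition, and confirming that the two occurrences of $\zeta^{d(d-1)/2}$ (one arising from the coefficient of $P$ and one from $\prod_i \det A_i$) combine to yield precisely the sign $(-1)^{nm(d+1)}$ claimed.
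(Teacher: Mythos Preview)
Your proof is correct and takes essentially the same approach as the paper: reduce $\Phi_k$ to block-triangular form via row operations and evaluate the determinant using the matrix factorization relations together with the identity $\zeta^{d(d-1)/2}=(-1)^{d-1}$. The only technical difference is that you invert the diagonal blocks by passing to $Q[g^{-1}]$, whereas the paper instead pre-multiplies $\Phi_1$ by a (non-invertible) diagonal matrix built from products of the $\psi_j$ and cancels its determinant at the end---both devices exploit that $g$ is a nonzerodivisor in the same way.
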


    \begin{proof}
        First, note that it suffices to compute the determinant of $\Phi_1$. Indeed, if $\det\Phi_1$ can be computed for arbitrary $X$ and $Y$, then one can compute $\det\Phi_k$ for all $k \in \ZZ_d$ by applying Lemma \ref{thm:tensor_with_shifts}.

        Our strategy is to convert $\Phi_1$ into a lower triangular form using (not necessarily invertible) row operations. We describe these row operations with respect to the direct sum decompostion \eqref{eq:tensor}. We will also utilize the following notation: If $(X,\phi)$ is a matrix factorization and $r \ge 0$ is an integer, we let $\phi_k^{\{r\}} = \phi_k\phi_{k+1}\cdots \phi_{k+r-1}$ for $r > 0$ and $\phi_k^{\{0\}} = 1_{X_{k-1}}$. Note that these superscripts are not taken modulo $d$. In particular,  $\phi_k^{\{d\}} = f \cdot 1_{X_{k-1}}$ for any $k \in \ZZ_d$.

        First we scale each row of $\Phi_1$ using the diagonal map
            \[\alpha(i,j) = 
                \begin{cases}
                    1_{X_{i-1}} \otimes \psi_2^{\{d-i\}} & \text{ if } i=j\\
                    0 & \text{ else}
                \end{cases}.
            \]
        Note that $\alpha$ is not an isomorphism in general. The resulting product is
    \[
    \alpha\Phi_1 = 
    \begin{psmallmatrix}
        1_{X_0} \otimes \psi_2^{\{d\}} & \phi_1 \otimes \psi_2^{\{d-1\}} & 0 & \cdots & 0 & 0 \\
        0 & \zeta 1_{X_1}\otimes \psi_2^{\{d-1\}} & \phi_2 \otimes \psi_2^{\{d-2\}} & \cdots & 0 & 0\\
        0 & 0 & \zeta^2 1_{X_2}\otimes \psi_2^{\{d-2\}} & \ddots & 0 & 0 \\
        \vdots & \vdots & \vdots & \ddots & \ddots & \vdots \\
        0 & 0 & 0 &  & \zeta^{d-2} 1_{X_{d-2}}\otimes \psi_2^{\{2\}} & \phi_{d-1} \otimes \psi_2^{\{1\}} \\
        \phi_0 \otimes \psi_2^{\{0\}} & 0 & 0 & \cdots & 0 & \zeta^{d-1} 1_{X_{d-1}}\otimes \psi_2^{\{1\}} \\
    \end{psmallmatrix}.
    \]
    Next, using invertible row operations on $\alpha\Phi_1$, we clear above the main diagonal from right to left. We obtain the following lower triangular matrix
    \[\beta = 
    \begin{psmallmatrix}
        c_{d-1}(\phi_1^{\{d\}} \otimes 1_{Y_1} + 1_{X_0} \otimes \psi_2^{\{d\}}) & 0 & 0 & \cdots & 0 & 0 \\
        c_{d-2}\phi_2^{\{d-1\}} \otimes \psi_2^{\{0\}} & \zeta 1_{X_1}\otimes \psi_2^{\{d-1\}} & 0 & \cdots & 0 & 0\\
        c_{d-3}\phi_3^{\{d-2\}} \otimes \psi_2^{\{0\}} & 0 & \zeta^2 1_{X_2}\otimes \psi_2^{\{d-2\}} & & 0 & 0 \\
        \vdots & \vdots & \vdots & \ddots & \vdots & \vdots \\
        c_1\phi_{d-1}^{\{2\}} \otimes \psi_2^{\{0\}} & 0 & 0 &  & \zeta^{d-2} 1_{X_{d-2}}\otimes \psi_2^{\{2\}} & 0 \\
        \phi_0 \otimes \psi_2^{\{0\}} & 0 & 0 & \cdots & 0 & \zeta^{d-1} 1_{X_{d-1}}\otimes \psi_2^{\{1\}} \\
    \end{psmallmatrix}
    \]
    where $c_j = (-1)^j\prod_{k=0}^j \zeta^k$, $1 \leq j \leq d-1$. In particular, we have that $c_{d-1} = (-1)^{d-1}\prod_{k=0}^{d-1}\zeta^k = (-1)^{d-1}(-1)^{d+1} = 1$. It follows that the $\beta(1,1) = (f+g) \cdot 1_{X_0}\otimes 1_{Y_1}$ and therefore 
        \[\det\beta = (f+g)^{nm} \prod_{k = 0}^{d-1}\zeta^{nmk} \cdot \det(\alpha) = (-1)^{nm(d+1)}(f+g)^{nm} \cdot\det(\alpha).\]
    Since $\alpha\Phi_1$ and $\beta$ differ by an isomorphism (with determinant 1), we have 
    \[\det(\alpha) \det(\Phi_1) = \det(\beta) = (-1)^{nm(d+1)}(f+g)^{nm} \cdot\det(\alpha).\] Since $g$ is a nonzerodivisor in $Q$, so is $\det(\alpha)$. The desired conclusion follows by cancelling $\det(\alpha)$.
    \end{proof}
    
\section{Kn\"orrer's direct sum decomposition}\label{sec:knorrer_decomp}

    An important ingredient to the classification of simple hypersurface singularities \cite[Theorem A]{buchweitz1987cohen} and Kn\"orrer's periodicity theorem \cite[Theorem 3.1]{knorrer1987cohen} is a straightforward direct sum decomposition result which says that if $(\phi,\phi)$ is a matrix factorization of an element $f$ and $z$ is an indeterminate, then the matrix factorization $(\phi,\phi) \wotimes{} (z,z)$ of $f+z^2$ decomposes into a direct sum of the form
        \[(\phi,\phi)\wotimes{} (z,z) \cong \left(\begin{pmatrix}
            zI - i\phi & 0 \\
            0 & zI + i\phi
        \end{pmatrix}, \begin{pmatrix}
            zI + i\phi & 0 \\
            0 & zI - i\phi
        \end{pmatrix}\right)\] 
    where $i$ is a square root of $-1$. The key here is that both $(\phi,\phi) \in \matfac{}{2}{f}$ and $(z,z)\in\matfac{}{2}{z^2}$ satisfy $T(\phi,\phi) = (\phi,\phi)$ and $T(z,z) = (z,z)$. This was generalized by Yoshino \cite[Lemma 3.2]{yoshino1998tensor} in the case $d=2$. We give a further extension of these results to the case of $d$-fold matrix factorizations for arbitrary $d \ge 2$. 

    Let $Q$ be a commutative ring and $d \ge 2$ an integer. Fix a primitive $2d^{\textit{th}}$ root of unity, $\omega \in \CC$. In this section we will assume that $Q$ is a $\ZZ[\omega]$-algebra so that $Q$ contains both a $2d^{\textit{th}}$ root of unity and a $d^{\textit{th}}$ root of unity (the images of $\omega$ and $\omega^2$ respectively). We will abuse notation and denote the image of $\omega$ in $Q$ by the same symbol.

    \begin{lemma}\label{thm:roots_of_unity_lemma}
        Let $\omega \in \CC$ be a primitive $2d^{\textit{th}}$ root of unity, and $t$ be an integer such that $t + d$ is even. Then $\sum_{j \in \ZZ_d} \omega^{-j^2 + tj} \neq 0$.  
    \end{lemma}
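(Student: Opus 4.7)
The plan is to show $|S|^2 = d$, where $S \coloneqq \sum_{j \in \ZZ_d} \omega^{-j^2+tj}$; since $d \neq 0$ in $\CC$, this forces $S \neq 0$. This is the standard trick for establishing the nonvanishing of Gauss-type sums, computing the squared modulus by combining with the complex conjugate.

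Before squaring, I would first check that the summand $\omega^{-j^2+tj}$ is well-defined as a function of $j \in \ZZ_d$, so that the sum even makes sense. Replacing $j$ by $j+d$ shifts the integer exponent by $d(-2j-d+t)$, so, using $\omega^d = -1$ (which holds because $\omega$ is primitive of order $2d$), the summand picks up an overall factor of $(-1)^{-2j-d+t} = (-1)^{t+d}$. The hypothesis that $t+d$ is even makes this factor equal to $1$, and the same check shows that the squared summand to appear below also descends to $\ZZ_d \times \ZZ_d$.

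Next, using $\overline{\omega} = \omega^{-1}$, I would expand
\[
|S|^2 = S\overline{S} = \sum_{j,k \in \ZZ_d} \omega^{-j^2+tj+k^2-tk} = \sum_{j,k \in \ZZ_d} \omega^{(k-j)(k+j-t)}.
\]
Substituting $k = j+m$ (a bijection on $\ZZ_d \times \ZZ_d$), the exponent becomes $m^2 - mt + 2mj$, and the double sum factors as
\[
|S|^2 = \sum_{m \in \ZZ_d} \omega^{m^2 - mt}\,\sum_{j \in \ZZ_d}(\omega^{2m})^j.
\]
The inner geometric sum equals $d$ when $\omega^{2m} = 1$ and vanishes otherwise. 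Since $\omega$ has order $2d$, $\omega^{2m}=1$ iff $d \mid m$, iff $m = 0$ in $\ZZ_d$. Hence only $m = 0$ contributes, giving $|S|^2 = d$.

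There is no real obstacle here: the argument is essentially a direct computation. The only place where the hypothesis $t+d$ even is needed is in the well-definedness check at the start; after that, the Gauss-sum manipulation proceeds cleanly and yields the precise value $|S|^2 = d$, which is strictly stronger than the nonvanishing claimed.
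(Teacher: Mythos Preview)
Your proof is correct and is essentially the same argument as the paper's: both first verify well-definedness of the summand using the parity hypothesis, then compute the product $S \cdot \sum_{l \in \ZZ_d}\omega^{l^2-tl}$ by a diagonal substitution and evaluate the resulting inner geometric sum. The only cosmetic difference is that you phrase the second factor as $\overline{S}$ (using $\overline{\omega}=\omega^{-1}$) and the conclusion as $|S|^2=d$, whereas the paper writes out the second sum explicitly.
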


    \begin{proof}
        We first claim that the expression $\omega^{-j^2 + tj}$ does not depend on the representative chosen for $j$ modulo $d$ (this fact relies on $t+d$ being even). If $j \equiv j' \mod d$, then either $j \equiv j' \mod 2d $, in which case we are done, or $j \equiv j' + d \mod 2d$. In the latter case,      
            \[-j^2 \equiv -(j')^2 + d^2 \mod 2d\] 
        and $tj \equiv tj' + td \mod 2d$. Thus,
            \[-j^2 + tj \equiv -(j')^2 + d^2 + tj' + td \equiv -(j')^2 + tj' + d(d+t) \equiv -(j')^2 + tj' \mod 2d.\]
        
        Now, we claim that $(\sum_{j \in \ZZ_d}\omega^{-j^2 + tj}) (\sum_{l \in \ZZ_d}\omega^{l^2 - tl}) = d$. Performing the change of variable $s = l - j$, we have 
            \begin{align*}
                \left(\sum_{j \in \ZZ_d}\omega^{-j^2 + tj}\right) \left(\sum_{l \in \ZZ_d}\omega^{l^2 - tl}\right)
                &= \left(\sum_{j \in \ZZ_d} \omega^{-j^2 + tj}\right)\left(\sum_{s \in \ZZ_d} \omega^{(s+j)^2 - t(s+j)}\right)\\
                &= \sum_{s \in \ZZ_d} \left(\sum_{j \in \ZZ_d} \omega^{(s^2 - st) + 2sj}\right)
            \end{align*}
         The sum $\sum_{j \in \ZZ_d} \omega^{(s^2 - st) + 2sj} = \omega^{s^2 - st} \sum_{j \in \ZZ_d} \omega^{2sj}$ is equal to $d$ when $s = 0$ in $\ZZ_d$ and $0$ otherwise. This proves the claim and the desired conclusion follows.
    \end{proof}

    Let $\mc C$ be a $Q$-linear category, that is, an additive category such that the abelian group $\hom_{\mc C}(X,Y)$ has a $Q$-module structure for all $X,Y \in \mc C$, and such that composition of morphisms is $Q$-bilinear. In the proposition below, we will consider a pair of commuting morphisms in $\mc C$. By this we mean a pair of morphisms $A,B \in \hom_{\mc C}(X,X)$, for some $X \in \mc C$, such that $AB = BA$. The main result of this section is an application of the following identity among matrices in $\mc C$.
    
    \begin{proposition}\label{thm:symm_decomp_prop}
        Assume that $Q$ is a $\ZZ[\omega,1/d]$-algebra. Let $A$ and $B$ be commuting morphisms in a $Q$-linear category $\mc C$ and set $\Phi$ to be the $d\times d$ block morphism
            \[\Phi = \begin{pmatrix}
                B & A & 0 & \cdots & 0 \\
                0 & \omega^2 B & A & \cdots & 0 \\
                \vdots & \vdots & \ddots & \ddots & \vdots \\
                0 & 0 & 0 & \ddots &A\\
                A & 0 & 0 & \cdots & \omega^{2(d-1)}B
            \end{pmatrix}.\] 
        Then there exist isomorphisms $\alpha_k$, for each $k \in \ZZ_d$, such that
            \[\alpha_{k-1}\Phi \alpha_k^{-1} = \begin{pmatrix}
                A - \omega^{2k-1}B & 0 & \dots & 0 \\
                0 & A - \omega^{2k+1}B  & \dots & 0 \\
                \vdots & \vdots & \ddots & \vdots \\
                0 & 0 & \dots & A - \omega^{2k-3}B 
                \end{pmatrix}.\]
    \end{proposition}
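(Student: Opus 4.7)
The plan is to exhibit the isomorphisms $\alpha_k$ explicitly as $d\times d$ matrices whose entries are scalar multiples of $1_X$, where $X$ denotes the common source and target of $A$ and $B$. Conceptually, this is a twisted discrete Fourier transform in the cyclic block variable: the diagonal twist in $\Phi$ by powers of $\omega^2$, combined with the cyclic shift by $A$, is simultaneously diagonalized by a Vandermonde-like change of basis, producing the ``eigenvalues'' $A - \omega^{2k+2\ell-3}B$ for $\ell=1,\dots,d$. The commutativity of $A$ and $B$ is what makes these diagonal entries well-defined morphisms.

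Concretely, I would set $c_k(i,\ell) := \omega^{(k-i)(k-i+d+2\ell)}\in Q$ and let $\alpha_k^{-1}$ be the matrix with $(i,\ell)$-entry $c_k(i,\ell)\cdot 1_X$. The target identity $\alpha_{k-1}\Phi\alpha_k^{-1} = D$ is equivalent to $\Phi\alpha_k^{-1} = \alpha_{k-1}^{-1}D$. Computing the $(i,\ell)$-entry of each side from the shape of $\Phi$ and the diagonal form of $D$, it suffices to verify the two scalar identities
\[ c_k(i+1,\ell) = c_{k-1}(i,\ell) \qquad \text{and} \qquad \omega^{2(i-1)}c_k(i,\ell) = -\omega^{2k+2\ell-3}c_{k-1}(i,\ell) \]
for all $i\in\ZZ_d$ and $\ell\in\{1,\dots,d\}$. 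Each is an elementary exponent computation modulo $2d$; the key observations are that my choice of $c_k(i,\ell)$ depends on $k$ and $i$ only through the difference $k-i$, and that the cyclic wrap-around at $i=d$ is automatic from $\omega^{2d}=1$ (together with the fact that $d+1-k-\ell$ is an integer).

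For invertibility of $\alpha_k$, I would factor $\alpha_k^{-1} = U^{(k)}M^{(k)}$, where $U^{(k)}$ is the diagonal matrix with entries $\omega^{(k-i)^2+d(k-i)}\in Q^\times$ and $M^{(k)}(i,\ell) = (\omega^{2(k-i)})^{\ell}$ is a Vandermonde-type discrete Fourier matrix built from the primitive $d$-th root of unity $\omega^2$. A direct multiplication gives $M^{(k)}\cdot\overline{M^{(k)}} = d\cdot I$, where $\overline{M^{(k)}}(\ell,i') = \omega^{-2\ell(k-i')}$, so the hypothesis $1/d\in Q$ is used precisely here to invert $M^{(k)}$ and hence $\alpha_k^{-1}$.

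The main obstacle is finding the correct ansatz for $c_k(i,\ell)$: the quadratic dependence on $k-i$, the linear coupling to $\ell$, and the sign arising from the $d(k-i)$ term all have to be calibrated simultaneously so that both scalar identities hold. Once this formula is in hand, the rest of the argument is essentially bookkeeping with roots of unity, and the whole proof runs through without appealing to the more delicate Gauss-sum input of Lemma \ref{thm:roots_of_unity_lemma}, which instead becomes useful for related identities elsewhere in the paper.
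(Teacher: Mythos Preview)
Your proposal is correct and takes a genuinely different route from the paper's proof. The paper defines $\alpha_k$ directly (rather than its inverse) by the circulant formula $\alpha_k(i,j)=\omega^{\mathfrak p(j-i-k)}1_X$ with $\mathfrak p(m)=-m^2+dm$, verifies $\alpha_{k-1}\Phi=\Phi_k'\alpha_k$ entrywise, and then establishes invertibility via the circulant determinant formula together with the Gauss-sum nonvanishing of Lemma~\ref{thm:roots_of_unity_lemma}. Your matrices are not circulant; instead your $\alpha_k^{-1}$ factors as a diagonal twist times a Vandermonde/DFT matrix in the primitive $d^{\text{th}}$ root $\omega^2$, and invertibility follows from the elementary orthogonality relation $\sum_{\ell}\omega^{2\ell s}=0$ for $s\not\equiv 0\pmod d$, which holds already in $\ZZ[\omega]$ and hence in $Q$. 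The advantage of your approach is that it bypasses Lemma~\ref{thm:roots_of_unity_lemma} entirely and isolates exactly where the hypothesis $1/d\in Q$ enters; the paper's approach has the virtue that the single circulant ansatz handles both the intertwining identity and the determinant in one stroke, at the cost of invoking the quadratic Gauss sum.
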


    \begin{proof}
        To simplify the notation, we introduce the polynomial $\mf p(m) = -m^2 + dm$, $m \in \ZZ$. For each $k \in \ZZ_d$, we define a map $\alpha_k : X^d \to X^d$, where $X$ is the common domain/codomain of $A$ and $B$, by the following rule: for any $i,j \in \ZZ_d$, the $(i,j)$-component of $\alpha_k$ is defined to be $\alpha_k (i, j) = \omega^{\mf p(j-i-k)} 1_X$. As in the proof of Lemma \ref{thm:roots_of_unity_lemma}, this formula does not depend on the representatives chosen for $i, j, k$. For each $k \in \ZZ_d$, let $\Phi_k'$ denote the $d \times d$ block diagonal morphism in the statement of the proposition. We will show that $\alpha_{k-1} \Phi_k = \Phi_k' \alpha_k$ by computing the $(i, j)$-component of each composition. 
        
        Fix $k \in \ZZ_d$. The only non-zero entries of the $j$th column of $\Phi$ are $\Phi(j-1, j) = A$ and $\Phi(j, j) = \omega^{2(j-1)} B$. Therefore,
            \begin{align*}
                (\alpha_{k-1} \Phi)(i, j) 
                &= \alpha_{k-1} (i,j-1) A + \alpha_{k-1}(i,j) \omega^{2(j-1)}B\\
                &= \omega^{\mf p(j-i-k)}A + \omega^{\mf p(j-i-k+1) + 2(j-1)} B\\
                &= \omega^{\mf p(j-i-k)}A + \omega^{\mf p(j-i-k) + d + 2k + 2(i-1) -1}B
            \end{align*}

    Similarly, the only non-zero entry in the $i$th row of $\Phi_k'$ is $A - \omega^{2k + 2(i-1) -1} B$, so 
        \[(\Phi_k' \alpha_k) (i, j) = \omega^{\mf p(j-i-k)}A - \omega^{\mf p(j-i-k) + 2k + 2(i-1) - 1}B.\] 
    Since $\omega^{d} = -1$, it follows that $(\alpha_{k-1}\Phi)(i,j) = (\Phi_k' \alpha_k)(i,j)$ for all $i,j,k \in \ZZ_d$.

    It remains to show that $\alpha_k$ is an isomorphism for each $k \in \ZZ_d$. It suffices to prove this in the case that $\alpha_k$ is the $d\times d$ matrix with entries in $\ZZ[\omega,1/d]$ given by $\alpha_k(i,j) = \omega^{\mf p(j-i-k)}$, $i,j \in \ZZ_d$. Fixing $k \in \ZZ_d$, we observe that $\alpha_k$ is a \textit{circulant matrix}, as $\alpha_k (i,j)$ is a function of $j - i$, i.e. if $j - i = j' - i'$ so that $\alpha_k(i,j)$ and $\alpha_k(i', j')$ lie on the same diagonal (modulo $d$) of $\alpha_k$, then $\alpha_k (i, j) = \alpha_k (i', j')$. Since $\alpha_k$ is circulant, the determinant is given by
        \begin{align*}
            \det\alpha_k 
            &= \prod_{s=1}^d (\alpha_k (1, 1) + \alpha_k (1, 2) \omega^{2s} + \alpha_k (1, 3) \omega^{4s} + \dots + \alpha_k(1, d) \omega^{2(d-1)s})\\
            &= \prod_{s = 1}^d \left(\sum_{j = 1}^d \omega^{\mf{p}(j-1-k) + 2s(j-1)}\right).
        \end{align*}
    By Lemma \ref{thm:roots_of_unity_lemma}, each of these sums is a unit in $\ZZ[\omega,1/d]$; take $t = 2 + 2k + 2s + d$. Hence $\det\alpha_k$ is a unit implying that $\alpha_k$ is invertible.
\end{proof}

   \begin{remark}
       Proposition \ref{thm:symm_decomp_prop} can be viewed as a matrix analogy to the polynomial factorization of $x^d + y^d$ into linear factors (see Example \ref{ex:knorrer_decomp_example}). Indeed, this explains the presence of the $2d^{\textit{th}}$ roots of unity which allow us to unify the even and odd case into one statement. In particular, when $d$ is odd, the condition that $Q$ is a $\ZZ[\omega]$-algebra is automatically satisfied by our usual assumption: if $\zeta \in \CC$ is a primitive $d^{\textit{th}}$ root of unity, then $\omega = - \zeta$ is a primitive $2d^{\textit{th}}$ root of unity. Thus, $\ZZ[\zeta] = \ZZ[\omega]$ and therefore $Q$ is a $\ZZ[\zeta]$-algebra if and only if it is a $\ZZ[\omega]$-algebra. This is, however, not true when $d$ is even. In this case, the condition that $Q$ is a $\ZZ[\omega]$-algebra is strictly stronger. 
   \end{remark}

   The main theorem of this section is a direct application of Proposition \ref{thm:symm_decomp_prop}.

    \begin{theorem}\label{thm:symm_decomposition}
        Let $f,g \in Q$ and assume that $Q$ is a $\ZZ[\omega,1/d]$-algebra. Suppose $X \in \matfac{Q}{d}{f}$ and $Y \in \matfac{Q}{d}{g}$ satisfy $TX = X$ and $TY = Y$. Then there exists $Z \in \matfac{Q}{d}{f+g}$ such that
        \[X \wotimes{\zeta} Y \cong \bigoplus_{i \in \ZZ_d} T^iZ\]
        where the tensor product is taken with respect to the $d^{\textit{th}}$ root of unity $\zeta = \omega^2$.
    \end{theorem}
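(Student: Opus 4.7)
The plan is to apply Proposition~\ref{thm:symm_decomp_prop} to a suitable pair of commuting operators extracted from the tensor product. The hypotheses $TX = X$ and $TY = Y$ are literal equalities of matrix factorizations, so they force $X_k$ to equal a common module $M$ for every $k \in \ZZ_d$, with a common degree $-1$ map $\phi \in \End_Q(M)$ satisfying $\phi^d = f\cdot 1_M$; similarly $Y_k = N$ and $\psi_k = \psi$ with $\psi^d = g\cdot 1_N$. Setting $A = \phi \otimes 1_N$ and $B = 1_M \otimes \psi$ in $\End_Q(M\otimes_Q N)$, these commute (they act on distinct tensor factors) and satisfy $A^d = f\cdot 1$, $B^d = g\cdot 1$. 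Inspecting \eqref{eq:tensor} and using $\zeta = \omega^2$, the map $\Phi_k$ of $X \wotimes{\zeta} Y$ is in fact independent of $k$ and agrees with the matrix $\Phi$ appearing in Proposition~\ref{thm:symm_decomp_prop}.

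Next I define the candidate summand $Z \in \matfac{Q}{d}{f+g}$ by $Z_k = M \otimes_Q N$ and degree $-1$ map $\zeta_k = A - \omega^{2k-1}B : Z_k \to Z_{k-1}$. Because $A$ and $B$ commute, the product of any $d$ consecutive maps in $Z$ is
\[
\prod_{k \in \ZZ_d}(A - \omega^{2k-1}B) = A^d + B^d = (f+g)\cdot 1_{M\otimes N},
\]
where the first equality uses the polynomial identity $\prod_{k=0}^{d-1}(x - \omega^{2k-1}y) = x^d + y^d$, valid because $(\omega^{2k-1})^d = -1$ so that the $\omega^{2k-1}$ enumerate the $d$ distinct $d$th roots of $-1$. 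Hence $Z$ is a genuine $d$-fold matrix factorization of $f+g$, and its $\ell$th shift $T^\ell Z$ has $k$th map $\zeta_{k+\ell} = A - \omega^{2(k+\ell)-1}B$.

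Now apply Proposition~\ref{thm:symm_decomp_prop} to $A$ and $B$: it produces, for each $k \in \ZZ_d$, an isomorphism $\alpha_k$ such that $\alpha_{k-1}\Phi\alpha_k^{-1}$ is the block-diagonal morphism whose $\ell$th diagonal entry ($\ell = 0, 1, \dots, d-1$) is precisely $A - \omega^{2(k+\ell)-1}B = \zeta_{k+\ell}$. Thus the tuple $(\alpha_0, \alpha_1, \dots, \alpha_{d-1})$ is a degree zero isomorphism from $X \wotimes{\zeta} Y$ to the matrix factorization whose $k$th map is $\operatorname{diag}(\zeta_k, \zeta_{k+1}, \ldots, \zeta_{k+d-1})$. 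Identifying the $\ell$th diagonal slot across all $k$ with $T^\ell Z$ yields the desired decomposition $X \wotimes{\zeta} Y \cong \bigoplus_{\ell \in \ZZ_d} T^\ell Z$.

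The only substantive step is reducing to Proposition~\ref{thm:symm_decomp_prop}: once the equalities $TX = X$ and $TY = Y$ are unpacked into constant maps $\phi, \psi$ and commuting operators $A, B$, the rest is bookkeeping, matching the $\ell$th diagonal block uniformly for every $k$ with the $k$th map of $T^\ell Z$. The potential subtlety I would be most careful with is the roots-of-unity calculation $\prod (x - \omega^{2k-1}y) = x^d + y^d$, because it is what forces the hypothesis that $Q$ be a $\ZZ[\omega]$-algebra rather than merely a $\ZZ[\zeta]$-algebra when $d$ is even.
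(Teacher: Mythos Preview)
Your proof is correct and follows exactly the approach of the paper: set $A = \phi \otimes 1$ and $B = 1 \otimes \psi$, observe that each $\Phi_k$ coincides with the matrix $\Phi$ of Proposition~\ref{thm:symm_decomp_prop}, and read off the direct-sum decomposition from the block-diagonal form given there. You supply more detail than the paper's three-line proof---in particular the explicit verification that $Z \in \matfac{Q}{d}{f+g}$ via the factorization $\prod_{k}(x - \omega^{2k-1}y) = x^d + y^d$---but this extra step is not strictly necessary, since once the $\alpha_k$ are known to be isomorphisms the matrix-factorization property of the diagonal blocks follows automatically from that of $X \wotimes{\zeta} Y$. One cosmetic point: using $\zeta_k$ for the maps of $Z$ clashes with $\zeta = \omega^2$ already in play.
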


    \begin{proof}
        If $X = (\phi,\phi,\dots,\phi) \in \matfac{Q}{d}{f}$ and $Y = (\psi,\psi,\dots,\psi) \in \matfac{Q}{d}{g}$, then set $B = 1 \otimes \psi$ and $A = \phi \otimes 1$ and apply Proposition \ref{thm:symm_decomp_prop}. The factorization $Z$ is given by
            \[Z = \left(\phi\otimes 1 - \omega (1 \otimes \psi), \phi\otimes 1 - \omega^3(1 \otimes \psi),\dots,\phi\otimes 1 - \omega^{2d-1}(1 \otimes \psi) \right) \in \matfac{Q}{d}{f+g}.\]
    \end{proof}

     If the factorizations $X$ and $Y$ are only isomorphic to their shifts (not necessarily equal), then a similar result is possible with additional assumptions.
    
    \begin{corollary}\label{thm:symm_decomposition_cor}
        Assume $(Q,\mf n, \Bbbk)$ is a complete regular local ring with $\Bbbk$ algebraically closed of characteristic not dividing $d$ and fix non-zero elements $f,g\in \mf n^2$. Suppose $X \in \matfac{Q}{d}{f}$ and $Y \in \matfac{Q}{d}{g}$ satisfy $TX \cong X$ and $TY \cong Y$. Then there exists $Z \in \matfac{Q}{d}{f+g}$ such that
            \[X \wotimes{\zeta} Y \cong \bigoplus_{i \in \ZZ_d} T^iZ\]
        where the tensor product is taken with respect to the $d^{\textit{th}}$ root of unity $\zeta = \omega^2$.
    \end{corollary}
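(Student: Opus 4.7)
The plan is to reduce Corollary \ref{thm:symm_decomposition_cor} to Theorem \ref{thm:symm_decomposition} by strengthening each of the hypotheses $TX \cong X$ and $TY \cong Y$ to actual equalities after replacing $X$ and $Y$ by isomorphic matrix factorizations. Concretely, I will show that any $X \in \matfac{Q}{d}{f}$ with $TX \cong X$ is isomorphic to a factorization of the form $X' = (\phi,\phi,\dots,\phi)$ where $\phi^d = f \cdot 1$. Once this is established for both $X$ and $Y$, functoriality of the tensor product gives $X \wotimes{\zeta} Y \cong X' \wotimes{\zeta} Y'$, and Theorem \ref{thm:symm_decomposition} applied to $X' \wotimes{\zeta} Y'$ produces the required decomposition $\bigoplus_{i\in\ZZ_d} T^i Z$.

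To prove the reduction, I will fix any isomorphism $\alpha : X \to TX$ and set $u = \alpha^d \in \End(X)$, the $d$-fold iterated composition, which is an automorphism of $X$. The strategy is to modify $\alpha$ to a new isomorphism $\tilde\alpha : X \to TX$ with $\tilde\alpha^d = 1_X$, and then ``unfold'' along this rigid isomorphism. The modification takes the form $\tilde\alpha = \alpha v^{-1}$, where $v$ is a $d$-th root of $u$ lying in the commutative $Q$-subalgebra $R = Q[u] \subseteq \End(X)$. To produce $v$, note that $R$ is a finite $Q$-module over the complete local ring $Q$, hence a semi-local complete Noetherian ring that decomposes as a product $\prod R_i$ of complete local $Q$-algebras whose residue fields are algebraic extensions of $\Bbbk$. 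Algebraic closure of $\Bbbk$ forces each such residue field to equal $\Bbbk$, and since $\charac \Bbbk \nmid d$, Hensel's lemma lifts a $d$-th root of the image of $u$ (which exists in $\Bbbk$ because $\Bbbk$ is algebraically closed) to each $R_i$. Patching gives $v \in R$ with $v^d = u$.

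With $v$ in hand, the identity $\alpha \circ \alpha^d = T(\alpha^d) \circ \alpha$, where $T$ is read as the induced ring homomorphism $\End(X) \to \End(TX)$, extends by $Q$-linearity and multiplicativity to $\alpha v = T(v) \alpha$. An induction using this commutation shows $\tilde\alpha^d = \alpha^d v^{-d} = 1_X$. To unfold, set $X'_k = X_0$ for all $k \in \ZZ_d$ and define $\sigma : X \to X'$ by $\sigma_0 = 1_{X_0}$ and $\sigma_k = (\tilde\alpha_{k-1}\cdots\tilde\alpha_0)^{-1}$ for $k \ge 1$; the cyclic consistency $\sigma_d = \sigma_0$ is exactly $\tilde\alpha^d = 1$. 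Pushing the maps $\phi_k$ of $X$ through $\sigma$ yields $\phi'_k = \sigma_{k-1}\phi_k\sigma_k^{-1}$, and a telescoping computation using the intertwining relation $\tilde\alpha_{k-1}\phi_k = \phi_{k+1}\tilde\alpha_k$ shows that $\phi'_k = \phi_1\tilde\alpha_0$ for every $k$. Hence $X \cong X' = (\phi,\dots,\phi)$ with $\phi = \phi_1\tilde\alpha_0$, and the reduction is complete.

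The main obstacle is the construction of $v$: this is where all three hypotheses on $(Q,\mf n,\Bbbk)$ are used essentially. Completeness of $Q$ is needed so that the finite $Q$-algebra $Q[u]$ splits into complete local factors; algebraic closure of $\Bbbk$ is needed to identify the residue fields of these factors with $\Bbbk$ and to extract an initial $d$-th root; and the characteristic condition is exactly what lets Hensel's lemma lift that root. The remaining telescoping calculation is a routine unraveling of the intertwining relation, and the final appeal to Theorem \ref{thm:symm_decomposition} only requires observing that the hypotheses of that theorem (namely that $Q$ is a $\ZZ[\omega,1/d]$-algebra) are automatic under the assumptions of Corollary \ref{thm:symm_decomposition_cor}, as a primitive $2d^{\textit{th}}$ root of unity can be lifted from $\Bbbk$ to $Q$ by Hensel and $d$ is a unit in $Q$.
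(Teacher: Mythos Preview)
Your argument is correct and follows the same two-step plan as the paper: first replace $X$ and $Y$ by isomorphic factorizations that are literally fixed by $T$, then invoke Theorem~\ref{thm:symm_decomposition}. The paper dispatches the first step in one line by citing \cite[Corollary~4.3]{leuschke2023branched}, whereas you supply a self-contained proof of that reduction: form $u=\alpha^{(d)}\in\End(X)$, extract a $d$-th root $v$ of $u$ inside the finite commutative $Q$-algebra $Q[u]$ via Hensel's lemma, and unfold along $\tilde\alpha=\alpha v^{-1}$. Your route has the virtue of making transparent exactly where each hypothesis (completeness of $Q$, algebraic closure of $\Bbbk$, $\charac\Bbbk\nmid d$) enters, at the cost of some length; the paper's citation is shorter but opaque. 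One small point worth making explicit in your write-up: Hensel's lemma for $t^d-u$ needs the image of $u$ in each residue field of $Q[u]$ to be nonzero, i.e.\ $u$ must be a unit in $Q[u]$. This holds because $u^{-1}\in\End(X)$ is itself integral over $Q$, and an integral dependence relation for $u^{-1}$ rewrites as an expression for $u^{-1}$ as a $Q$-polynomial in $u$.
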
 

    \begin{proof}
        The additional assumptions allow us to apply \cite[Corollary 4.3]{leuschke2023branched}, that is, we may assume that $X = TX$ and $Y = TY$. The corollary now follows from Theorem \ref{thm:symm_decomposition}.
    \end{proof}

    In the case $d=2$, Theorem \ref{thm:symm_decomposition} and Corollary \ref{thm:symm_decomposition_cor} recover \cite[Lemma 3.2]{yoshino1998tensor} and by extension, \cite[Proposition 2.7]{knorrer1987cohen}.

    \begin{example}\label{ex:knorrer_decomp_example}
        Let $Q = \Bbbk[x,y]$ where $\Bbbk$ is a domain. 

        \begin{enumerate}[label = (\roman*)]
            \item Assume that $\Bbbk$ contains a primitive fourth root of unity, $i \in \Bbbk$. Consider the rank one factorizations $X = (x,x) \in \matfac{Q}{2}{x^2}$, $Y = (y,y) \in \matfac{Q}{2}{y^2}$, and $Z = (x-iy,x-i^3y) = (x-iy,x+iy) \in \matfac{Q}{2}{x^2+y^2}$. Then Theorem \ref{thm:symm_decomposition} implies that $X \wotimes{} Y \cong Z \oplus TZ$, that is,
                \[\left(\begin{pmatrix}
                    y & x \\
                    x & -y
                \end{pmatrix},\begin{pmatrix}
                    y & x \\
                    x & -y
                \end{pmatrix}\right) \cong \left(\begin{pmatrix}
                    x-iy & 0 \\
                    0 & x+iy
                \end{pmatrix},\begin{pmatrix}
                    x+iy & 0 \\
                    0 & x-iy
                \end{pmatrix}\right).\]
                
            \item Assume that $\Bbbk$ contains a primitive third root of unity, $\zeta \in \Bbbk$. Then $\omega = - \zeta$ is a primitive $6^{\textit{th}}$ root of unity. Consider the rank one factorizations $X = (x,x,x) \in \matfac{Q}{3}{x^3}$, $Y = (y,y,y) \in \matfac{Q}{3}{y^3}$, and
                \[Z = (x - \omega y, x - \omega^3 y, x - \omega^5 y) = (x+\zeta y,x+ y, x+\zeta^2y) \in \matfac{Q}{3}{x^3+y^3}.\]
        In this case, Theorem \ref{thm:symm_decomposition} implies that
            \( X \wotimes{\zeta} Y \cong Z \oplus TZ \oplus T^2Z. \)
        \end{enumerate}
    \end{example}

\section{Direct sum decompositions of tensor products}\label{sec:disjoint_vars}

    Following the work of \cite{yoshino1998tensor}, we will consider tensor products of matrix factorizations of polynomials $f$ and $g$ which are written in \textit{disjoint} sets of variables. In this setting, we are able to prove several decomposition results, most of which are direct generalizations of the results shown in \cite{yoshino1998tensor}. We begin by defining the key functor used in this section.

    \begin{definition}\label{def:reduction_mod_I}
        Let $Q$ be a commutative ring, $d \ge 2$, and $f \in Q$. If $(X,\phi) \in \matfac{Q}{d}{f}$, and $I$ is an ideal of $Q$, then we let $X_I$ denote the matrix factorization
            \[X_I \coloneqq (X \otimes_Q Q/I, \phi \otimes_Q 1_{Q/I}) \in \matfac{Q/I}{d}{f+I}.\]
        Notice that $\rank X = \rank X_I$, if both ranks are well-defined. Furthermore, \textit{reduction modulo} $I$ forms a functor 
                \[( - )_I : \matfac{Q}{d}{f} \to \matfac{Q/I}{d}{f+I}.\]
    \end{definition}
    
    We will adopt the following notation throughout the rest of this section.

    \begin{notation}\label{notation}
        Let $d \ge 2$ be an integer and let $\Bbbk$ be a field containing a primitive $d^{\textit{th}}$ root of unity, $\zeta \in \Bbbk$. Fix the power series rings $S_1 = \Bbbk\llbracket x_1,x_2,\dots,x_a\rrbracket$, $S_2 = \Bbbk\llbracket y_1,y_2,\dots,y_b \rrbracket$, and $S = \Bbbk\llbracket x_1,\dots,x_a,y_1,\dots,y_b\rrbracket$ and fix non-invertible non-zero elements $f \in S_1$ and $g \in S_2$. By tensoring with $S$ we obtain faithful embeddings $\matfac{S_1}{d}{f} \hookrightarrow \matfac{S}{d}{f}$ and $\matfac{S_2}{d}{g} \hookrightarrow \matfac{S}{d}{g}$. We will identify a factorization $X \in \matfac{S_1}{d}{f}$ with its image in $\matfac{S}{d}{f}$, and similarly for $Y \in \matfac{S_2}{d}{g}$, in order to form the tensor product $X \wotimes{} Y \in \matfac{S}{d}{f+g}$. Together with reduction modulo $(x) \coloneqq S(x_1,x_2,\dots,x_a)$ and reduction modulo $(y) \coloneqq S(y_1,y_2,\dots,y_b)$, we get the following diagram of functors:
            \[
            \begin{tikzcd}
                \matfac{S_1}{d}{f} \rar[shift left=2pt]{\wotimes{}} &\matfac{S}{d}{f+g}  \lar[shift left=2pt]{(-)_y} \rar[swap,shift right=2pt]{(-)_x}  &\matfac{S_2}{d}{g} \lar[swap, shift right=2pt]{\wotimes{}}
            \end{tikzcd}.
            \]
    \end{notation}

    The interaction between these functors will be key going forward. Recall that a matrix factorization $(X,\phi)$, over a local ring $(Q,\mf n)$, is said to be \textit{reduced} if $\image \phi_k \subseteq \mf n X_{k-1}$ for each $k \in \ZZ_d$. In other words, after choosing bases, the entries of the matrix representing $\phi_k, k \in \ZZ_d$, live in the maximal ideal of $Q$.

    \begin{proposition}\label{thm:tensor_modx_mody} Let $(X,\phi) \in \matfac{S_1}{d}{f}$ and $(Y,\psi) \in \matfac{S_2}{d}{g}$.
        \begin{enumerate}[label = (\roman*)]
            \item \label{thm:tensor_modx_mody1} Suppose $X \in \matfac{S_1}{d}{f}$ is reduced of rank $n$. Then there is an isomorphism in $\matfac{S_2}{d}{g}$ 
                \[(X\wotimes{} Y)_x \cong \bigoplus_{i=1}^{d} (\zeta^{i-1}T^{1-i}Y)^n.\]
            \item \label{thm:tensor_modx_mody2} Suppose $Y \in \matfac{S_2}{d}{g}$ is reduced of rank $m$. Then there is an isomorphism in $\matfac{S_1}{d}{f}$
                \[(X\wotimes{} Y)_y \cong \bigoplus_{i=1}^{d} (\zeta^{1-i}T^{1-i}X)^m.\]
        \end{enumerate}
    \end{proposition}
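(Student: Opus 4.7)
The plan is to prove both parts by analyzing the block structure of $\Phi_k$ in~\eqref{eq:tensor} after reducing modulo one set of variables; the ``reduced'' hypothesis annihilates exactly one of the two types of block entries. Since part (i) corresponds to the case where the entries that survive the reduction are already the diagonal ones, it is cleaner to prove (i) directly and then deduce (ii) from (i) via Proposition~\ref{thm:otimes_alternate}.

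For part~\ref{thm:tensor_modx_mody1}: since $X$ is reduced in $\matfac{S_1}{d}{f}$, the entries of $\phi_i$ lie in the maximal ideal of $S_1$, hence in $(x)S$, so $\phi_i \otimes 1_S$ vanishes modulo $(x)$. In the formula~\eqref{eq:tensor}, this kills every component $\Phi_k(i, i+1) = \phi_i \otimes 1_{Y_{k-i}}$, leaving only the diagonal components $\Phi_k(i,i) = \zeta^{i-1}\, 1_{X_{i-1}} \otimes \psi_{k+1-i}$. Thus $(X \wotimes{} Y)_x$ splits as a direct sum of $d$ blocks indexed by $i \in \ZZ_d$. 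Using that $X_{i-1}$ is free of rank $n$ over $S_1$ and that $S/(x) \cong S_2$ identifies $(X_{i-1} \otimes_S Y_{k-i})_x$ with $n$ copies of $Y_{k-i}$, the $i$th block becomes $n$ copies of the factorization whose $k$th map is $\zeta^{i-1}\psi_{k+1-i}$. By the indexing convention $(T^{1-i}Y)_k = Y_{k+1-i}$, with $k$th map $\psi_{k+1-i}$, this block is precisely $\zeta^{i-1} T^{1-i}Y$ in the notation of Lemma~\ref{thm:scaled_by_units}. Summing over $i = 1, \dots, d$ gives the claim.

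For part~\ref{thm:tensor_modx_mody2}: by Proposition~\ref{thm:otimes_alternate} there is an isomorphism $X \wotimes{\zeta} Y \cong Y \wotimes{\zeta^{-1}} X$ in $\matfac{S}{d}{f+g}$, and since the map constructed there is $S$-linear it descends along $(-)_y$ to an isomorphism in $\matfac{S_1}{d}{f}$. In the factorization $Y \wotimes{\zeta^{-1}} X$, the left factor is $Y$, which is reduced of rank $m$ with variables $y$, so part~\ref{thm:tensor_modx_mody1} applies with the roles of $X$ and $Y$ swapped and with $\zeta$ replaced by $\zeta^{-1}$; this yields
\[(Y \wotimes{\zeta^{-1}} X)_y \cong \bigoplus_{i=1}^{d} \bigl((\zeta^{-1})^{i-1}\, T^{1-i} X\bigr)^m = \bigoplus_{i=1}^{d} \bigl(\zeta^{1-i}\, T^{1-i} X\bigr)^m,\]
which is the statement of~\ref{thm:tensor_modx_mody2}.

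The bulk of the work is bookkeeping, and the main obstacle is keeping the indexing conventions straight: one must match the enumeration of summands from~\eqref{eq:tensor} (where the $i$th summand of $(X \wotimes{} Y)_k$ is $X_{i-1} \otimes Y_{k-i}$) against the shift convention $(T^jY)_k = Y_{k+j}$ in order to recognize the block as $\zeta^{i-1}T^{1-i}Y$. The scalar factors $\zeta^{i-1}$ and $\zeta^{1-i}$ in the two statements appear naturally from~\eqref{eq:tensor} and from applying part~\ref{thm:tensor_modx_mody1} with $\zeta^{-1}$, respectively; they are cosmetic in the sense that each has $d$th power equal to $1$, so Lemma~\ref{thm:scaled_by_units} would allow them to be absorbed, but keeping them in makes the source of each summand transparent and will matter in the applications that follow.
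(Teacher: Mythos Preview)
Your proposal is correct and follows essentially the same approach as the paper: for (i) you reduce modulo $(x)$, kill the off-diagonal blocks of \eqref{eq:tensor} using that $X$ is reduced, and identify the surviving diagonal blocks with copies of $\zeta^{i-1}T^{1-i}Y$; for (ii) you invoke Proposition~\ref{thm:otimes_alternate} and apply (i) with $\zeta^{-1}$ in place of $\zeta$. The paper does exactly this, though more tersely and with less bookkeeping detail than you supply.
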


    \begin{proof}
        Since $(X,\phi)$ is reduced, the entries of $\phi$ live in the ideal $(x)$. This implies that $\phi \otimes 1_{S/(x)} = 0$. On the other hand, since $\psi$ has entries in $S_2$, we may identify $\psi_k \otimes 1_{S/(x)}$ with $\psi_k$ and so, for any $j,k \in \ZZ_d$, we have
            \[(1_{X_j} \otimes \psi_k) \otimes_S 1_{S/(x)} \cong \psi_k^{\oplus n}.\]
        Considering \eqref{eq:tensor}, we obtain an isomorphism
                \((X\wotimes{} Y)_x \cong \bigoplus_{i=1}^d (\zeta^{i-1}T^{1-i}Y)^n.\)
        
        The second isomorphism follows similarly by considering the isomorphism $X \wotimes{\zeta} Y \cong Y \wotimes{\zeta^{-1}} X$ given in Proposition \ref{thm:otimes_alternate}.
    \end{proof}

    The powers of $\zeta$ can also be omitted from these isomorphisms by Lemma \ref{thm:scaled_by_units}. In particular, this lemma is the $d$-fold analogue of \cite[Lemma 2.9]{yoshino1998tensor} and \cite[Lemma 2.5]{knorrer1987cohen}.

    In order to investigate the number of indecomposable direct summands of $X \wotimes{} Y$, we will work directly with idempotents in the endomorphism ring of $X \wotimes{} Y$. The next lemma shows how the functors $(-)_x$ and $(-)_y$ interact with morphisms of this form and more generally how they interact with morphisms between tensor products.

    \begin{lemma}\label{thm:morphisms_mod_x/y}\,
        \begin{enumerate}[label = (\roman*)]
            \item \label{thm:morphisms_mod_x} Assume $X \in \matfac{S_1}{d}{f}$ is reduced of rank $n$, $Y,Y' \in \matfac{S_2}{d}{g}$, and $\alpha \in \hom(X\wotimes{} Y, X \wotimes{} Y')$. Then, for $i,j \in \ZZ_d$, the $(i,j)$-component of the induced morphism
                \[\alpha_x: \bigoplus_{j=1}^d (\zeta^{j-1}T^{1-j}Y)^n \to \bigoplus_{i=1}^d (\zeta^{i-1}T^{1-i}Y')^n\] 
            is given by the morphism
                \[\alpha_x(i,j) = (\alpha_0(i,j)_x,\alpha_1(i,j)_x,\dots,\alpha_{d-1}(i,j)_x) = (\alpha_k(i,j)_x)_{k=0}^{d-1}\]
            in $\matfac{S_2}{d}{g}$.

            \item \label{thm:morphisms_mod_y} Assume $Y \in \matfac{S_2}{d}{g}$ is reduced of rank $m$, $X,X' \in \matfac{S_1}{d}{f}$, and $\beta \in \hom(X\wotimes{} Y, X' \wotimes{} Y)$. Then, for $i,j \in \ZZ_d$, the $(i,j)$-component of the induced morphism
                \[\beta_y: \bigoplus_{j=1}^d (T^{1-j}X)^m \to \bigoplus_{i=1}^d (T^{1-i}X')^m\]
            is given by the morphism
                \[\beta_y(i,j) = (\beta_k(2-i+k,2-j+k)_y)_{k=0}^{d-1}\]
            in $\matfac{S_1}{d}{f}$.
        \end{enumerate}
    \end{lemma}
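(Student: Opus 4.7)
The plan is to reduce modulo $(x)$ (resp.\ $(y)$) entry by entry using the explicit formula~\eqref{eq:tensor} for the components of $\Phi_k$, and then match the surviving entries against the direct sum decompositions of Proposition~\ref{thm:tensor_modx_mody}.

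For part~\ref{thm:morphisms_mod_x}, I would observe that, since $X$ is reduced, modding out by $(x)$ kills every entry $\phi_i\otimes 1$ of $\Phi_k$ and leaves only the diagonal entries $\zeta^{i-1}\,1_{X_{i-1}}\otimes\psi_{k+1-i}$. Under the identification $X_{i-1}\otimes Y_{k+1-i}/(x)\cong (Y_{k+1-i})^n$, the $i$th summand at degree $k$ is precisely the degree $k$ piece of the $i$th factor $(T^{1-i}Y)^n$ of Proposition~\ref{thm:tensor_modx_mody}\ref{thm:tensor_modx_mody1}, \emph{uniformly in $k$}. Hence for any morphism $\alpha\in\hom(X\wotimes{}Y, X\wotimes{}Y')$, the entry $\alpha_k(i,j)\colon X_{j-1}\otimes Y_{k+1-j}\to X_{i-1}\otimes Y'_{k+1-i}$ reduces mod $(x)$ to a map $(Y_{k+1-j})^n\to(Y'_{k+1-i})^n$, and the tuple $(\alpha_k(i,j)_x)_{k=0}^{d-1}$ is, by the identification above, exactly the $(i,j)$-component of $\alpha_x$.

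For part~\ref{thm:morphisms_mod_y} the plan is analogous but with a twist: modding by $(y)$ now kills the diagonal entries of $\Phi_k$ and leaves only the off-diagonal entries $\phi_i\otimes 1$ at positions $(i,i+1)$. The identification of summands across degrees is no longer uniform. Specifically, the $i$th shifted factor $T^{1-i}X$ has degree $k$ piece $X_{k+1-i}$, which matches the mod-$y$ reduction of the $j$th summand $X_{j-1}\otimes Y_{k+1-j}$ of $(X\wotimes{}Y)_k$ precisely when $j\equiv k+2-i\pmod d$. Tracing this through, the $(i,j)$-entry of $\beta_y$ with respect to the shifted-factor decomposition at degree $k$ is the $(k+2-i,k+2-j)$-entry of $(\beta_k)_y$, namely $\beta_k(2-i+k,2-j+k)_y$, yielding the claimed formula.

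The main obstacle is the bookkeeping in part~\ref{thm:morphisms_mod_y}: keeping track of the $k$-dependent permutation $i\mapsto k+2-i$ of summand labels which gives rise to the shifted indices $2-i+k$ and $2-j+k$. Once these identifications are pinned down, the fact that the resulting tuple is a morphism in $\matfac{S_1}{d}{f}$ is automatic from $\beta_y$ being a morphism together with the direct sum decomposition of Proposition~\ref{thm:tensor_modx_mody}\ref{thm:tensor_modx_mody2}. An alternative, more conceptual route is to deduce part~\ref{thm:morphisms_mod_y} from part~\ref{thm:morphisms_mod_x} via the natural isomorphism $X\wotimes{\zeta}Y\cong Y\wotimes{\zeta^{-1}}X$ of Proposition~\ref{thm:otimes_alternate}, which swaps $X$ and $Y$ and places the reduced factor first; the degree-dependent shift in indexing then emerges from the $k$-dependent identification of summands induced by that swap.
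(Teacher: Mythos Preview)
Your proposal is correct and takes essentially the same approach as the paper. For part~\ref{thm:morphisms_mod_x} both arguments simply invoke \eqref{eq:tensor} and Proposition~\ref{thm:tensor_modx_mody}; for part~\ref{thm:morphisms_mod_y} the paper encodes the $k$-dependent relabeling $i\mapsto k+2-i$ that you describe directly by conjugating $(\beta_k)_y$ with explicit permutation matrices (a cyclic shift $C$ and a reversal $B$), arriving at the same component formula $(BC^{-k}(\beta_k)_yC^kB)(i,j)=\beta_k(2-i+k,2-j+k)_y$.
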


    \begin{proof}
        The first statement follows directly from \eqref{eq:tensor} and Proposition \ref{thm:tensor_modx_mody}. To see that \ref{thm:morphisms_mod_y} holds, we introduce notation. Let $C$ be a $d \times d$ permutation matrix corresponding to a permutation $\sigma$ of $\{1,2,\dots,d\}$. We allow $C$ to act on direct sums of modules in the following way: if $H = \bigoplus_{i=1}^d H_i$, then we set $CH = \bigoplus_{i=1}^d H_{\sigma(i)}$ and view $C$ as a homomorphism $C: H \to CH$ given by $(h_1,h_2,\dots,h_d) \mapsto (h_{\sigma(1)},h_{\sigma(2)},\dots,h_{\sigma(d)})$.

        We will consider two such permutation matrices to justify the formula for $\beta_y$:
            \[C = \begin{pmatrix}
                0 & 0 & \cdots  & 0 & 1\\
                1 & 0 & \ddots  & 0 & 0\\
                0 & 1 & \ddots & 0 & 0\\
                \vdots & \vdots & \ddots & \vdots & \vdots\\
                0 & 0 & \cdots & 1 & 0
            \end{pmatrix} \quad \text{ and } \quad B =\begin{pmatrix}
                1 & 0 & \cdots & 0 & 0\\
                0 & 0 & \ddots & 0 & 1\\
                0 & 0 & \ddots & 1 & 0\\
                \vdots & \vdots & \ddots & \ddots\\
                0 & 1 & \cdots & 0 & 0
            \end{pmatrix}.\]
        If $h: \bigoplus_{j=1}^{d}H_j \to \bigoplus_{i=1}^d H_i'$ is a homomorphism, then conjugation by $C^k$, $k \in \ZZ_d$, cycles the components of $h$; for any $i,j \in \ZZ_d$,
            \[(C^{-k} h C^k)(i,j) = h(i+k,j+k): H_{j+k} \to H_{i+k}'.\]
        On the other hand, noting that $B = B^{-1}$, conjugation by $B$ gives
            \[(BhB)(i,j) = h(2-i,2-j):H_{2-j} \to H_{2-i}'.\]
        
        With this set up, we write $\beta_y$ with respect to the direct sum decomposition \eqref{eq:tensor}, and apply these permutations to the components of $\beta_y$. One can then check directly that $\beta_y$ induces the morphism 
            \[\left(B(\beta_0)_yB,BC^{-1}(\beta_1)_yCB,\dots,BC^{-(d-1)}(\beta_{d-1})_yC^{d-1}B\right)\]
        from $\bigoplus_{j=1}^d (T^{1-j}X)^m \to \bigoplus_{i=1}^d (T^{1-i}X')^m$. For any $i,j \in \ZZ_d$, the $(i,j)$-component of this morphism is
            \begin{align*}
                \left((BC^{-k}(\beta_k)_yC^kB)(i,j)\right)_{k=0}^{d-1} &= \left((C^{-k}(\beta_k)_yC^k)(2-i,2-j)\right)_{k=0}^{d-1}\\
                &= \left((\beta_k)_y(2-i+k,2-j+k)\right)_{k=0}^{d-1}\\
                &= \left(\beta_k(2-i+k,2-j+k)_y\right)_{k=0}^{d-1}
            \end{align*}
        which is the formula claimed in \ref{thm:morphisms_mod_y}.
            
    \end{proof}

\subsection{Reduced matrix factorizations}

    In this section, we will restrict our attention to tensor products of reduced matrix factorizations only. 

    \begin{notation}\label{notation2}
        In addition to the notation given in \eqref{notation}, we will assume that $X \in \matfac{S_1}{d}{f}$ and $Y \in \matfac{S_2}{d}{g}$ are indecomposable reduced matrix factorizations of $f$ and $g$ respectively. We denote the number of indecomposable summands (counted with multiplicity) in the direct sum decomposition of $X \wotimes{} Y$ by $\#(X\wotimes{} Y)$. This quantity is well defined since the Krull-Schmidt Theorem holds in $\matfac{S}{d}{f+g}$ when $S$ is complete as it is in this section (see \cite[Section 3]{tribone2022matrix}). We also set $r = \gcd(n,m)$ where $n = \rank X$ and $m = \rank Y$. 
    \end{notation}

    Our first result is a direct extension of \cite[Theorem 3.3]{yoshino1998tensor}.
    
    \begin{theorem}\label{thm:dr_summands}
        The tensor product $X \wotimes{} Y$ has at most $dr$ indecomposable summands.
    \end{theorem}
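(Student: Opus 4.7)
The plan is to bound the number of indecomposable summands of $X\wotimes{} Y$ by a two-sided divisibility argument: every summand will have rank divisible by both $n$ and $m$, hence by $\lcm(n,m) = nm/r$. Since the total rank of $X\wotimes{} Y$ equals $dnm$, this immediately caps the number of summands at $dr$.

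To set the argument up, write $X\wotimes{} Y \cong \bigoplus_{i=1}^N Z_i$ as a direct sum of indecomposables in $\matfac{S}{d}{f+g}$, with $N = \#(X\wotimes{} Y)$. To assign a rank $r_i$ to each $Z_i$ via Lemma \ref{thm:idempotent_normal_form}, I first need $f+g$ to be a nonzerodivisor in $S$; this follows from $f$ and $g$ being non-zero and non-invertible, since an equality $f(x) + g(y) = 0$ in $S$ would force $f(x) = -g(0)$ upon setting $y=0$, and hence $f$ to be constant --- impossible. Setting $r_i = \rank Z_i$, Lemma \ref{thm:idempotent_normal_form} then yields $\sum_{i=1}^N r_i = \rank (X\wotimes{} Y) = dnm$.

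The core step is the application of the two reduction functors $(-)_x$ and $(-)_y$. Using Proposition \ref{thm:tensor_modx_mody}\ref{thm:tensor_modx_mody2} together with Lemma \ref{thm:scaled_by_units} to absorb powers of $\zeta$, I obtain $(X\wotimes{} Y)_y \cong \bigoplus_{i=1}^d (T^{1-i}X)^m$, a direct sum of indecomposables of rank $n$ (the $T^{1-i}X$ are indecomposable since $X$ is and $T$ is an autoequivalence of $\matfac{S_1}{d}{f}$). Because reduction modulo $(y)$ preserves rank (Definition \ref{def:reduction_mod_I}) and Krull-Schmidt holds in $\matfac{S_1}{d}{f}$ as $S_1$ is complete local, each $(Z_i)_y$ decomposes into copies of the shifts $T^{1-i}X$, whence $n \mid r_i$. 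The symmetric argument using $(-)_x$ and Proposition \ref{thm:tensor_modx_mody}\ref{thm:tensor_modx_mody1} gives $m \mid r_i$.

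Combining these two divisibilities, $\lcm(n,m) = nm/r$ divides each $r_i$, so $r_i \ge nm/r$; summing, $N \cdot (nm/r) \le dnm$, i.e.\ $N \le dr$. The only delicate point is verifying that the hypotheses of Lemma \ref{thm:idempotent_normal_form} apply to $X \wotimes{} Y$ --- namely that $f+g$ is a nonzerodivisor --- and I do not anticipate any further obstacle, since the rest is a clean bookkeeping argument built on Proposition \ref{thm:tensor_modx_mody} and Krull-Schmidt.
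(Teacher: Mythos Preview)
Your proposal is correct and follows essentially the same route as the paper's proof: reduce modulo $(x)$ and $(y)$ using Proposition \ref{thm:tensor_modx_mody}, invoke Krull--Schmidt to see that the rank of any summand is divisible by both $n$ and $m$, and then count against the total rank $dnm$. The only differences are cosmetic---you explicitly verify that $f+g$ is a nonzerodivisor and cite Lemma \ref{thm:idempotent_normal_form} to justify that ranks of summands are well-defined and additive, whereas the paper leaves these points implicit.
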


    \begin{proof}
        Let $Z\in\matfac{S}{d}{f+g}$ be a summand of $X\wotimes{} Y$. By Lemma \ref{thm:tensor_modx_mody}, $Z_y$ is isomorphic to a summand of $\bigoplus_{i \in \ZZ_d} (T^i X)^m$ (we omit the powers of $\zeta$ using Lemma \ref{thm:scaled_by_units}). Since $T^{i} X$ is indecomposable for each $i\in \ZZ_d$, the Krull-Schmidt property of $\matfac{S_1}{d}{f}$ implies that $Z_y \cong \bigoplus_{i \in \ZZ_d} (T^i X)^{t_i}$ for some integers $0 \leq t_i \leq m$. Hence, the rank of $Z_y$, which is the same as the rank of $Z$, is $(t_0+\dots + t_{d-1})n$. Similarly, reduction modulo $(x)$ gives us that the rank of $Z$ is equal to $(s_0 + \dots + s_{d-1})m$ for some integers $0\leq s_i \leq n$. Together, these imply that $(t_0+\dots+t_{d-1})n = (s_0+\dots+s_{d-1})m$ which must be at least $\lcm(n,m) = nm/r$. Since $X\wotimes{} Y$ is of rank $dnm$ and we have just shown that any summand must be of rank at least $nm/r$, it follows that $\#(X\wotimes{} Y) \leq dr$.
    \end{proof}

    This upper bound can be improved if the factorizations are highly non-symmetric (cf. \cite[Theorem 3.4]{yoshino1998tensor}).
   
    \begin{theorem}\label{thm:totally_non_symmetric_tensor_product}
        Suppose $X\not\cong T^iX$ and $Y \not\cong T^jY$ for all $i,j \neq 0$. Then $\#(X\wotimes{} Y) \leq r$.
    \end{theorem}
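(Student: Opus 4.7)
The plan is to sharpen the rank estimate of Theorem~\ref{thm:dr_summands}. Instead of only showing that every indecomposable summand has rank at least $nm/r$, I aim to show each summand has rank at least $dnm/r$; since $X \wotimes{} Y$ has total rank $dnm$, this immediately yields $\#(X \wotimes{} Y) \leq r$.

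Let $Z$ be an indecomposable summand of $X \wotimes{} Y$ coming from an idempotent $e \in \End(X \wotimes{} Y)$. Exactly as in the proof of Theorem~\ref{thm:dr_summands}, Proposition~\ref{thm:tensor_modx_mody} combined with Krull--Schmidt yields
\[
    Z_y \cong \bigoplus_{i \in \ZZ_d}(T^{1-i}X)^{t_i}, \qquad Z_x \cong \bigoplus_{j \in \ZZ_d}(T^{1-j}Y)^{s_j},
\]
with $0 \leq t_i \leq m$, $0 \leq s_j \leq n$, and $\rank Z = n \sum_i t_i = m \sum_j s_j$. The key new claim is that, under the non-symmetry hypothesis, these multiplicities are \emph{balanced}: $t_0 = t_1 = \cdots = t_{d-1} =: t$ and $s_0 = \cdots = s_{d-1} =: s$. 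Granting this, $dnt = dms$ together with $\gcd(n/r, m/r) = 1$ forces $t = (m/r)k$ and $s = (n/r)k$ for some positive integer $k$, so $\rank Z = dnt \geq dnm/r$, and the theorem follows.

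The heart of the proof is the balanced-multiplicity claim, and this is where the non-symmetry hypothesis is essential. Since $\{T^{1-i}X : i \in \ZZ_d\}$ is a collection of pairwise non-isomorphic indecomposables, by Lemma~\ref{thm:morphisms_mod_x/y}(ii) the off-diagonal components of $e_y$ between distinct isotypic summands are morphisms between non-isomorphic indecomposables, and hence lie in the Jacobson radical of $\End((X \wotimes{} Y)_y)$. Thus $e_y$ is block diagonal modulo the radical, and its $(i,i)$-block determines an idempotent in $M_m(\End(T^{1-i}X)/\operatorname{rad}) \cong M_m(\Bbbk)$ of rank $t_i$; the symmetric analysis yields $s_j$ from $e_x$. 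The point is now that both descriptions arise from the same module components $e_k(p,p)$: Lemma~\ref{thm:morphisms_mod_x/y}(ii) places $e_k(p,p)$ into the $(i,i)$-block of $e_y$ with $i = 2 + k - p$ (so, as $k$ varies with $p$ fixed, into all $d$ diagonal blocks of $e_y$), while Lemma~\ref{thm:morphisms_mod_x/y}(i) places that same $e_k(p,p)$ into the unique $(p,p)$-block of $e_x$. Applying Lemma~\ref{thm:idempotent_normal_form} to $e$ pins down the overall rank structure by putting $e$ in the form $(\epsilon, \ldots, \epsilon)$ with $\epsilon = \operatorname{diag}(I_{\rank Z}, 0)$, and I expect that combining this with the cyclic coupling described above forces all $t_i$ (and all $s_j$) to coincide.

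The main obstacle I anticipate is turning this cyclic coupling into the required equalities rigorously; this is where the sharpening from $dr$ to $r$ actually takes place. For $d = 2$ the balanced-multiplicities claim is essentially Yoshino's argument in \cite[Theorem~3.4]{yoshino1998tensor}; the added difficulty for $d > 2$ is that each fixed $p$ now produces $d$ relations from $e_y$ (rather than two) which must be reconciled with the single relation coming from $e_x$.
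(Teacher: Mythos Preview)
Your overall strategy and the ``balanced multiplicities'' claim match the paper's proof exactly, and you have correctly isolated the cyclic coupling via Lemma~\ref{thm:morphisms_mod_x/y}: the component $e_k(p,p)$ sits in the $(p,p)$-block of $e_x$ and in the $(2+k-p,\,2+k-p)$-block of $e_y$. The gap you flag is real, and it is precisely the heart of the paper's argument; what you write does not yet bridge it.

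The missing step is this. Knowing that $e_x(p,p)$ is congruent modulo the radical to an idempotent of the right rank is not enough: you need that each \emph{component} $e_k(p,p)_x$ has $\Bbbk$-rank exactly $mt_p$, independently of $k$. Only then does the coupling yield a quantity that is simultaneously a function of $p$ alone (from $e_x$) and of $k-p$ alone (from $e_y$), forcing constancy. The paper obtains this as follows. After lifting to idempotents $e_j$ on each isotypic piece (Lemma~\ref{thm:idempotentlemma}) and putting each $e_j$ in normal form via Lemma~\ref{thm:idempotent_normal_form}, one uses Lemma~\ref{thm:equiv_morphisms} to clear rows and columns so that
\[
\epsilon_x(j,j)\ \sim\ \begin{pmatrix}1_{t_j}&0\\0&A_j\end{pmatrix},\qquad A_j\in\rad,
\]
and hence $\epsilon_x\ \sim\ \begin{pmatrix}1&0\\0&B\end{pmatrix}$ with $B(j,j)=A_j$. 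A rank count against the known decomposition $\epsilon_x((X\wotimes{}Y)_x)\cong\bigoplus_j(\zeta^{j-1}T^{1-j}Y)^{t_j}$ then forces $B=0$, so in fact $\epsilon_x(j,j)\sim e_j$ on the nose. This is what pins down $\rank_\Bbbk(e_k(j,j)\otimes\Bbbk)$ for every $k$, and the analogous argument on the $y$-side completes the balance.

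One correction: applying Lemma~\ref{thm:idempotent_normal_form} to $e$ itself, as you suggest, does not help---the change of basis that normalizes $e$ destroys the block structure relative to the isotypic decompositions of $(X\wotimes{}Y)_x$ and $(X\wotimes{}Y)_y$. The paper instead applies it to each lifted idempotent $e_j$ on the isotypic summands, where the block structure is preserved.
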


    The proof of Theorem \ref{thm:totally_non_symmetric_tensor_product} will require some preparation.
    
    \subsubsection{The radical of a Krull-Schmidt category}
    
    Let $\mc C$ be a \textit{Krull-Schmidt category}, that is, an additive category such that every object decomposes into a finite sum of objects which have local endomorphism rings. Given a pair of objects $X,Y \in \mc C$, we will utilize the notion of the \textit{radical}, denoted $\rad_{\mc C}(X,Y)$. This is an ideal of $\mc C$ and is the category theoretic generalization of the Jacobson radical for rings. Indeed, $\rad_{\mc C}$ is the unique ideal of $\mc C$ which satisfies $\rad_{\mc C}(X,X) = \rad(\End_{\mc C}(X))$, for all $X \in \mc C$, where $\rad(-)$ on the right hand side denotes the usual Jacobson radical \cite[Proposition 2.9]{krause2015krull}. For our purposes, it will suffice to use the following equivalent definition of the radical.

    \begin{definition}\cite[Corollary 2.10]{krause2015krull}
        For objects $X,Y \in \mc C$, the we define the \textit{radical}, $\rad_{\mc C}(X,Y)$, to be the collection of morphisms $\alpha \in \hom_{\mc C}(X,Y)$ such that $1_Z - \sigma \alpha \tau$ is an isomorphism for all $\sigma \in \hom_{\mc C}(Y,Z)$ and $\tau \in \hom_{\mc C}(Z,X)$.
    \end{definition}
     We recall several useful facts about $\rad = \rad_{\mc C}$ which follow from \cite[Section 4]{krause2015krull} or \cite[Example 2.1.25]{krause2021homological}. Note that, while working with objects in the category $\mc C$, our indices are not taken modulo $d$.

    \begin{lemma}\label{thm:rad_properties}
        Let $X$ and $Y$ be objects in a Krull-Schmidt category $\mc C$.
        \begin{enumerate}[label = (\roman*)]
            \item \label{thm:rad_properties_1} Suppose $X=X_1^{n_1}\oplus \cdots \oplus X_t^{n_t}$ and $Y = Y_1^{m_1}\oplus \cdots \oplus Y_s^{m_s}$ for indecomposable objects $X_i,Y_j$, and positive integers $n_i,m_j$, for $1\leq i \leq t, 1\leq j \leq s$. Then $\rad(X,Y) \cong \bigoplus_{i,j}\rad(X_i^{n_i},Y_j^{m_j})$.
            \item \label{thm:rad_properties_2} Suppose that $X$ and $Y$ are indecomposable objects such that $X \not\cong Y$. Then $\rad(X^n,Y^m) = \hom(X^n,Y^m)$ for any $n,m\ge 1$. \qed
        \end{enumerate}
    \end{lemma}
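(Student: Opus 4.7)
The plan rests on two background facts that either follow formally from the stated definition of $\rad_{\mc C}$ or are immediate consequences of the Krull-Schmidt hypothesis: first, $\rad_{\mc C}$ is a two-sided ideal of $\mc C$ (closed under addition in each hom-group and under pre- and post-composition with arbitrary morphisms); second, for every indecomposable object $W$, $\End_{\mc C}(W)$ is a local ring, in which the Jacobson radical coincides with the set of non-units. Both statements are part of the standard theory of the categorical radical and are implicit in the references cited just above the lemma.

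For part (i), I will exhibit mutually inverse bijections via block decomposition. Let $\iota_i : X_i^{n_i} \to X$ and $\pi_i : X \to X_i^{n_i}$ (respectively $\iota_j' : Y_j^{m_j} \to Y$ and $\pi_j' : Y \to Y_j^{m_j}$) be the canonical inclusions and projections, so that $\sum_i \iota_i \pi_i = 1_X$ and $\sum_j \iota_j' \pi_j' = 1_Y$. Every $\alpha \in \hom(X, Y)$ decomposes uniquely as $\alpha = \sum_{i,j} \iota_j' (\pi_j' \alpha \iota_i) \pi_i$. The ideal property of $\rad_{\mc C}$ ensures that each block $\pi_j' \alpha \iota_i$ inherits radicality from $\alpha$, and conversely that reassembling radical blocks via $\sum_{i,j} \iota_j' \alpha_{ji} \pi_i$ yields a radical morphism. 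The two assignments are manifestly additive and inverse to one another, yielding the desired isomorphism of abelian groups.

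For part (ii), I will first use (i) itself, applied to the tautological decompositions $X^n = X \oplus \cdots \oplus X$ and $Y^m = Y \oplus \cdots \oplus Y$, to reduce to the case $n = m = 1$: it suffices to prove $\rad(X, Y) = \hom(X, Y)$ whenever $X$ and $Y$ are non-isomorphic indecomposables. So fix $\alpha \in \hom(X, Y)$, any $Z \in \mc C$, and morphisms $\sigma : Y \to Z$, $\tau : Z \to X$; the goal is to show $1_Z - \sigma\alpha\tau$ is invertible. The key tool here is the categorical Jacobson identity, valid in any additive category: $1_B - ab$ is invertible iff $1_A - ba$ is, for any $a : A \to B$ and $b : B \to A$, with explicit inverse $(1_A - ba)^{-1} = 1_A + b(1_B - ab)^{-1} a$. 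Applied to $a = \alpha$ and $b = \tau\sigma$, this reduces the problem to invertibility of $1_Y - \alpha h$ inside $\End_{\mc C}(Y)$, where $h := \tau\sigma : Y \to X$.

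The decisive step is to show that $\alpha h$ lies in $\rad(\End_{\mc C}(Y))$, equivalently (since $\End_{\mc C}(Y)$ is local) that $\alpha h$ is a non-unit. If $u := \alpha h$ were an isomorphism of $Y$, then $u^{-1}\alpha : X \to Y$ would satisfy $(u^{-1}\alpha) h = 1_Y$, making $h$ a split monomorphism and exhibiting $Y$ as a non-zero direct summand of $X$; indecomposability of $X$ would then force $X \cong Y$, contradicting the hypothesis. I expect this split-summand contradiction — the only place where the non-isomorphism hypothesis is actually used — together with the Jacobson-identity switch that brings $\sigma\alpha\tau$ into a form computable inside the local ring $\End_{\mc C}(Y)$, to be the main piece of genuine content; the remainder (block decomposition in (i), plus the standard fact that $r \in \rad(R)$ forces $1 - r$ to be a unit in any ring $R$) is routine manipulation.
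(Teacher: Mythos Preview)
The paper does not actually prove this lemma: it is stated with a trailing \qed and attributed to standard references (Krause) on the radical of a Krull-Schmidt category. Your proposal supplies an explicit argument where the paper gives none, and the overall strategy --- block decomposition plus the ideal property for (i), and Jacobson's trick combined with locality of $\End_{\mc C}(Y)$ for (ii) --- is correct and is essentially the standard one underlying those references.

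There is one small slip to repair. In your application of the Jacobson identity you take $a = \alpha$ and $b = \tau\sigma$, but with $a : X \to Y$ and $b : Y \to X$ the identity relates $1_Y - \alpha\tau\sigma$ to $1_X - \tau\sigma\alpha$; neither of these is the expression $1_Z - \sigma\alpha\tau$ that you must show is invertible. The choice you actually need is $a = \sigma : Y \to Z$ and $b = \alpha\tau : Z \to Y$, which gives precisely the equivalence ``$1_Z - \sigma\alpha\tau$ is invertible if and only if $1_Y - \alpha\tau\sigma$ is invertible'' --- and that is the conclusion you then correctly use. With this relabeling the argument is complete.
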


    \begin{lemma}\label{thm:idempotentlemma}
        Let $X_1,\dots,X_m$ be indecomposable objects in a Krull-Schmidt category $\mathcal C$ and set $X = X_1^{n_1}\oplus \cdots \oplus X_m^{n_m}$ for positive integers $n_1,\dots,n_m$. Let $e = (e(i,j))_{i,j}$ be an idempotent in $\End(X)$, where $e(i,j):X_j^{n_j}\to X_i^{n_i}$. If $e(i,k)e(k,j) \in \rad(X_j^{n_j},X_i^{n_i})$ for all $i,j,k$ where either $i\neq k$ or $j\neq k$, then there exist idempotents $e_i \in \End(X_i^{n_i})$, $1\leq i \leq m$, such that $e(i,i) - e_i \in \rad\End(X_i^{n_i})$ and $e(X) \cong \bigoplus_{i=1}^m e_i(X_i^{n_i})$, where $e(X)$ denotes the direct summand of $X$ given by the idempotent $e$.
    \end{lemma}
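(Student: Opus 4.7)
The plan is to read the idempotent relation $e^2 = e$ componentwise and then apply the standard machinery of semiperfect endomorphism rings. Expanding the matrix product yields
\[e(i,j) = \sum_{k=1}^m e(i,k)e(k,j) \quad \text{for all } i,j,\]
and by hypothesis every summand on the right lies in the radical except possibly the one with $k=i=j$. Isolating the diagonal case, I would write
\[e(i,i) - e(i,i)^2 = \sum_{k \neq i} e(i,k)e(k,i),\]
in which every term satisfies $i \neq k$ and thus lies in $\rad \End(X_i^{n_i})$; hence $e(i,i)$ is an idempotent in the quotient $\End(X_i^{n_i})/\rad \End(X_i^{n_i})$. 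For indices $i \neq j$, I would inspect each summand $e(i,k)e(k,j)$: the cases $k=i$, $k=j$, and $k \neq i,j$ all satisfy the hypothesis (either $i \neq k$ or $j \neq k$), so $e(i,j) \in \rad(X_j^{n_j},X_i^{n_i})$.

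The next step is to lift each $e(i,i) \bmod \rad$ to a genuine idempotent. Since $X_i$ is indecomposable in the Krull-Schmidt category $\mc C$, the ring $\End(X_i)$ is local, so $\End(X_i^{n_i}) \cong M_{n_i}(\End(X_i))$ is semiperfect and admits lifting of idempotents modulo its Jacobson radical. I would choose such a lift $e_i \in \End(X_i^{n_i})$ satisfying $e_i - e(i,i) \in \rad \End(X_i^{n_i})$, and then form the block-diagonal endomorphism $f = (f(i,j))_{i,j}$ defined by $f(i,i) = e_i$ and $f(i,j)=0$ for $i \neq j$. By construction $f$ is a genuine idempotent and the direct summand it cuts out is precisely $f(X) \cong \bigoplus_{i=1}^m e_i(X_i^{n_i})$.

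Finally, I would show that $e$ and $f$ are conjugate in $\End(X)$, which forces $e(X) \cong f(X)$. The difference $e - f$ has each diagonal block in $\rad \End(X_i^{n_i})$ (by the choice of $e_i$) and each off-diagonal block in $\rad(X_j^{n_j},X_i^{n_i})$ (by the first paragraph), so Lemma \ref{thm:rad_properties}\ref{thm:rad_properties_1} gives $e - f \in \rad \End(X)$. Now $\End(X)$ is semiperfect for the same reason as above, and in a semiperfect ring any two idempotents that are congruent modulo the Jacobson radical are conjugate via a unit of the form $1 + r$ with $r$ in the radical; applying this to $\End(X)$ produces an isomorphism $e(X) \cong f(X) = \bigoplus_i e_i(X_i^{n_i})$. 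The main obstacle I anticipate is the bookkeeping needed to verify the radical memberships against the asymmetric hypothesis and, depending on what the cited references make explicit, spelling out that $\End(X)$ is semiperfect so that both idempotent lifting and conjugacy of congruent idempotents can be invoked as black boxes.
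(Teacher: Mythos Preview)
Your argument is correct and follows the same architecture as the paper's proof: read off that each $e(i,i)$ is idempotent modulo the radical, lift to genuine idempotents $e_i$, assemble the block-diagonal idempotent, and show it is equivalent to $e$. You actually streamline one step: from $e(i,j)=\sum_k e(i,k)e(k,j)$ and the observation that for $i\neq j$ every summand satisfies the hypothesis, you get $e(i,j)\in\rad(X_j^{n_j},X_i^{n_i})$ for $i\neq j$ and hence $e-f\in\rad\End(X)$ directly; the paper instead only proves $(e-\epsilon)^2\in\rad\End(X)$ and then factors $1-(e-\epsilon)^2=(1+(e-\epsilon))(1-(e-\epsilon))$ to obtain the needed units. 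Your final appeal to conjugacy of idempotents congruent modulo the radical is exactly what the paper verifies by hand via the identity $(1+\gamma)\epsilon = e(1-\gamma)$ with $\gamma=e-\epsilon$, so the endgames coincide.
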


    \begin{proof}
        Since $e = e^2$, for each $1\leq i\leq m$, we have that
            \(e(i,i) = e^2(i,i) = \sum_{k=1}^m e(i,k)e(k,i).\)
        Since $e(i,k)e(k,i) \in \rad\End(X_i^{n_i})$ for all $i\neq k$ by assumption, we have that
            \[e(i,i) \equiv e(i,i)^2 \mod \rad\End(X_i^{n_i}).\] 
        Since $\mc C$ is a Krull-Schmidt category, we may lift $e(i,i)$ to an idempotent in $\End(X_i^{n_i})$, that is, there exists an idempotent $e_i \in \End(X_i^{n_i})$ such that $e(i,i) - e_i \in \rad(X_i^{n_i},X_i^{n_i})$. Let $\epsilon = \bigoplus_{i=1}^m e_i \in \End(X)$ and set $\gamma = e - \epsilon$. We claim that $\gamma^2 \in \rad\End(X)$. To see this, first notice that for each $i$,
            \[\gamma^2(i,i) = \sum_{k\neq i}e(i,k)e(k,i) + (e(i,i)-e_i)^2 \in \rad(X_i^{n_i},X_i^{n_i}).\]
        If $i\neq j$, then
            \[\gamma^2(i,j) = (e(i,i)-e_i) e(i,j) + e(i,j) (e(j,j) - e_j) + \sum_{k\neq i,k\neq j} e(i,k)e(k,j).\]
        Since $\rad_{\mc C}$ is an ideal in the category $\mc C$, the fact that $e(i,i)-e_i \in \rad(X_i^{n_i},X_i^{n_i})$ implies that $(e(i,i)-e_i)e(i,j) \in \rad(X_j^{n_j},X_i^{n_i})$. Similarly, we have that $e(i,j) (e(j,j) - e_j) \in \rad(X_j^{n_j},X_i^{n_i})$. Since the rest of the terms are in $\rad(X_j^{n_j},X_i^{n_i})$ by assumption, we have that $\gamma^2(i,j) \in \rad(X_j^{n_j},X_i^{n_i})$ for all $i,j$. Lemma \ref{thm:rad_properties}\ref{thm:rad_properties_1} then implies that $\gamma^2 \in \rad\End(X)$ as claimed.

        We can now finish the proof of the lemma. Since $\gamma^2 \in \rad\End(X)$, we have that $1_X - \gamma^2 = (1_X+\gamma)(1_X-\gamma)$ is a unit in $\End(X)$. Hence, both $(1_X-\gamma)$ and $(1_X+\gamma)$ are units in $\End(X)$. Note that, since $e$ and $\epsilon$ are idempotents and $\gamma = e - \epsilon$, we have
        \[(1_X+\gamma)\epsilon = \epsilon^2 + \gamma\epsilon = (\epsilon + \gamma)\epsilon = e\epsilon = e(e-\gamma) = e-e\gamma = e(1_X-\gamma).\] That is, $e$ and $\epsilon$ are equivalent idempotents. It follows that $e(X) \cong \epsilon(X) = \bigoplus_{i=1}^m e_i(X_i^{n_i})$.
    \end{proof}

    If, in addition, the indecomposable objects $X_1,X_2,\dots,X_m$ are pairwise non-isomorphic, then Lemma \ref{thm:rad_properties}\ref{thm:rad_properties_2} implies that the assumptions of the lemma are automatically satisfied. With this, we are able to prove the Theorem \ref{thm:totally_non_symmetric_tensor_product}. Our approach is a direct generalization of \cite[Theorem 3.4]{yoshino1998tensor}.

    \begin{proof}[Proof of Theorem \ref{thm:totally_non_symmetric_tensor_product}]
        Suppose $\epsilon \in \End(X\otimes Y)$ is an idempotent. Applying Lemma \ref{thm:tensor_modx_mody}\ref{thm:tensor_modx_mody1}, we obtain an idempotent $\epsilon_x \in \hom(\bigoplus_{j=1}^d(\zeta^{j-1}T^{1-j}Y))$. By assumption, the indecomposable matrix factorizations $\zeta^{j-1}T^{1-j}Y \cong T^{1-j}Y, j \in \ZZ_d$, are pairwise non-isomorphic. From Lemma \ref{thm:idempotentlemma}, we obtain idempotents $e_j \in \End(\zeta^{j-1}T^{1-j}Y)^n$, $j \in \ZZ_d$, such that \[\epsilon_x(j,j) - e_j \in \rad \End(\zeta^{j-1}T^{1-j}Y)^n)\] and $\epsilon_x((X\wotimes{} Y)_x) \cong \bigoplus_{j=1}^d e_j(\zeta^{j-1}T^{1-j}Y)^n)$. Since $T^{1-j}Y$ is indecomposable for each $j \in \ZZ_d$, the Krull-Schmidt theorem implies
            \begin{equation}
                \bigoplus_{j=1}^d e_j(\zeta^{j-1}T^{1-j}Y)^n) \cong \bigoplus_{j=1}^d (\zeta^{j-1}T^{1-j}Y)^{t_j}
            \end{equation} for some integers $0 \leq t_j \leq n$, $j \in \ZZ_d$.
        
        Fix $j \in \ZZ_d$. We may apply Lemma \ref{thm:idempotent_normal_form} to obtain an equivalent idempotent
                \[e_j \sim \begin{pmatrix}
                    1_{t_j} & 0 \\
                    0 & 0
                \end{pmatrix}\]
        where $1_{t_j}$ denotes the identity morphism on the first summand of
            \[(\zeta^{j-1}T^{1-j}Y)^n = (\zeta^{j-1}T^{1-j}Y)^{t_j}\oplus (\zeta^{j-1}T^{1-j}Y)^{n-t_j}.\] 
        Then, since $\epsilon_x(j,j) - e_j \in \rad \End(\zeta^{j-1}T^{1-j}Y)^n)$, we use Lemma \ref{thm:equiv_morphisms} to clear columns and rows and obtain
            \[\epsilon_x(j,j) \sim \begin{pmatrix}
                        1_{t_j} & 0 \\
                        0 & A_j
                    \end{pmatrix}\]
        for some $A_j \in \rad\End((\zeta^{j-1}T^{1-j}Y)^{n-t_j})$. Putting this together for all $j \in \ZZ_d$, and applying Lemma \ref{thm:equiv_morphisms} again, we obtain
            \[\epsilon_x \sim \begin{pmatrix}
                1 & 0 \\
                0 & B
                \end{pmatrix}\] 
        where $1$ is the identity morphism on $\bigoplus_{j=1}^d (\zeta^{j-1}T^{1-j}Y)^{t_j}$ and $B$ is a morphism in $$\End(\bigoplus_{j=1}^d (\zeta^{j-1}T^{1-j}Y)^{n-t_j})$$ satisfying $B(j,j) = A_j$ for each $j \in \ZZ_d$. This implies that there is an isomorphism of free $Q$-modules, $\image \epsilon_x \cong \bigoplus_{j=1}^d (\zeta^{j-1}T^{1-j}Y)^{t_j} \oplus \image B$. However, we know that $\epsilon_x((X\wotimes{} Y)_x) \cong \bigoplus_{j=1}^d (\zeta^{j-1}T^{1-j}Y)^{t_j}$ and therefore, counting ranks, $B$ must be zero. Hence $B(j,j) = A_j = 0$ for each $j \in \ZZ_d$. In other words,
            \[\epsilon_x(j,j) \sim \begin{pmatrix}
                1_{t_j} & 0 \\
                0 & 0 
                \end{pmatrix} \sim e_j\]
        for each $j \in \ZZ_d$. Now, recall that the $(i,j)$-component of $\epsilon_x$ is described by Lemma \ref{thm:morphisms_mod_x/y}\ref{thm:morphisms_mod_x}. In particular, for $j \in \ZZ_d$,
            \(\epsilon_x(j,j) = (\epsilon_0(j,j)_x,\epsilon_1(j,j)_x,\dots,\epsilon_{d-1}(j,j)_x).\)
        It follows that $\epsilon_i(j,j)_x \sim 
            \begin{pmatrix}
            I_{nt_j} & 0 \\
            0 & 0
            \end{pmatrix}$,
        for each $i \in \ZZ_d$, where $I_{nt_j}$ is an identity matrix of rank $nt_j$. This allows us to make the following rank count:
            \begin{equation}\label{eq:rank_count_modx}
                \rank_\Bbbk(\epsilon_i(j,j) \otimes_S \Bbbk) = \rank_\Bbbk(\epsilon_i(j,j)_x \otimes_S \Bbbk) = nt_j \text{ for each } i,j \in \ZZ_d.
            \end{equation} 

        Applying the same steps to $\epsilon_y$, this time using Lemma \ref{thm:morphisms_mod_x/y}\ref{thm:morphisms_mod_y}, we find that, for each $j \in \ZZ_d$, there exists an integer $0 \leq s_j \leq m$ such that
            \[\epsilon_i(2-j+i,2-j+i)_y \sim \begin{pmatrix}
                I_{ms_j} & 0 \\
                0 & 0
            \end{pmatrix}\]
        for all $i \in \ZZ_d$. In other words,
            \begin{equation}\label{eq:rank_count_mody}
                \rank_\Bbbk(\epsilon_i(2-j+i,2-j+i) \otimes_S \Bbbk) = ms_j \text{ for each } i,j \in \ZZ_d.
            \end{equation} 
        
        Now, let $j \in \ZZ_d$ and set $i = 2j-2$. Then \eqref{eq:rank_count_modx} and \eqref{eq:rank_count_mody} imply that 
            \[nt_j = \rank_\Bbbk(\epsilon_{2j-2}(j,j)\otimes_S \Bbbk) = ms_j.\]
        On the other hand, if $i = 2j-1$, then \eqref{eq:rank_count_modx} and \eqref{eq:rank_count_mody} imply that 
            \[nt_{j+1} = \rank_\Bbbk (\epsilon_{2j-1}(j+1,j+1)\otimes_S \Bbbk) = ms_j.\]
        Together, these imply that $nt_i = ms_j$ for all $i,j \in \ZZ_d$, and that $t\coloneqq t_1=t_2=\dots=t_d$ and $s \coloneqq s_1=s_2=\dots=s_d$. This implies that the matrix factorization $e(X\wotimes{} Y)$ has rank $\sum_{j=1}^d nt_j = dnt$ which is also equal to $\sum_{j=1}^d ms_j = dms$. Hence, the rank of the summand $e(X\wotimes{} Y)$ must be at least $\lcm(dn,dm) = dnm/r$. Since $\rank X\wotimes{} Y = dnm$, and any summand must have rank at least $dnm/r$, we have $\#(X\wotimes{} Y) \leq r$.
    \end{proof}

    In the following we will make repeated use of the following consequence of Nakayama's lemma: If $h: G \to F$ is homomorphism between finitely generated free $S$-modules of the same rank, then $h$ is an isomorphism if and only if $h \otimes 1_{S_1}$ is an isomorphism. The same holds for $h \otimes 1_{S_2}$, and in particular, $h, h \otimes 1_{S_1}$, and $h \otimes 1_{S_2}$ are isomorphisms simultaneously. 

    \begin{theorem}\label{thm:rel_prime}
        Let $\{u_1,u_2,\dots,u_{d-1},u_0\}$ be pairwise relatively prime elements of the maximal ideal of $S_1$ and set $X = (u_1,u_2,\dots,u_{d-1},u_0) \in \matfac{S_1}{d}{u_1u_2\cdots u_{d-1}u_0}$. If $Y \in \matfac{S_2}{d}{g}$ satisfies $Y \cong TY$, then $X \wotimes{} Y$ is indecomposable.
    \end{theorem}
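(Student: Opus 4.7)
My plan is to show that every idempotent $\epsilon \in \End(X \wotimes{} Y)$ is trivial, by combining the approach of the proof of Theorem \ref{thm:totally_non_symmetric_tensor_product} with the shift-symmetry provided by $Y \cong TY$. The first observation is that the coprimality of the $u_i$'s forces the shifts $X, TX, \ldots, T^{d-1}X$ to be pairwise non-isomorphic. Indeed, an isomorphism $T^iX \to T^jX$ with $i\not\equiv j \pmod d$ would produce units $\alpha_k \in S_1^{\times}$ satisfying $\alpha_{k-1} u_{k+i} = u_{k+j}\alpha_k$; since $\gcd(u_{k+i}, u_{k+j}) = 1$, this forces $u_{k+j} \mid \alpha_{k-1}$, contradicting that $\alpha_{k-1}$ is a unit while $u_{k+j}$ is a non-unit.

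Next I apply the $\epsilon_y$-strategy from Theorem \ref{thm:totally_non_symmetric_tensor_product}. By Lemma \ref{thm:tensor_modx_mody}\ref{thm:tensor_modx_mody2} and Lemma \ref{thm:scaled_by_units}, $(X\wotimes{} Y)_y \cong \bigoplus_{j=1}^d (T^{1-j}X)^m$, whose summands are pairwise non-isomorphic by the previous step. Lemma \ref{thm:idempotentlemma} combined with Lemmas \ref{thm:idempotent_normal_form} and \ref{thm:equiv_morphisms} (applied exactly as in the proof of Theorem \ref{thm:totally_non_symmetric_tensor_product}) produces integers $0 \le s_j \le m$ with $\rank_{\Bbbk}(\epsilon_k(2-j+k, 2-j+k) \otimes_S \Bbbk) = s_j$ for all $k, j \in \ZZ_d$. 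To get further rigidity, I examine the commuting relation $\epsilon_{k-1}\Phi_k = \Phi_k\epsilon_k$ at block position $(p,p+1)$: reducing modulo $(y)$ kills the $\psi$-entries of $\Phi_k$ and leaves $u_p\,\epsilon_{k-1}(p,p)_y = u_p\,\epsilon_k(p+1,p+1)_y$, so $\epsilon_k(p,p)_y$ depends only on $c = k-p$ mod $d$. Denote this common matrix by $\Delta_c \in M_m(S_1)$; analyzing the relation at non-adjacent off-diagonal positions, the pairwise coprimality of the $u_i$'s forces each off-diagonal block of $\epsilon_k$ modulo $(y)$ to be divisible by several pairwise coprime $u_i$'s, which in turn shows (via $\epsilon_k^2 = \epsilon_k$) that $\Delta_c^2 \equiv \Delta_c \pmod{\mf{m}_{S_1}}$, with $\Bbbk$-rank $s_{2+c}$.

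The hypothesis $Y \cong TY$ enters through Lemma \ref{thm:tensor_with_shifts}\ref{thm:tensor_with_shifts1}: a chosen isomorphism $Y\to TY$ induces an isomorphism $\sigma: X\wotimes{} Y \to T(X\wotimes{} Y)$, so every Krull--Schmidt decomposition of $X\wotimes{} Y$ is stable under the shift functor $T$. For each indecomposable summand $Z$, reduction modulo $(y)$ gives $Z_y \cong \bigoplus_{j} (T^{1-j}X)^{s_j^{(Z)}}$ and reduction modulo $(x)$ gives $Z_x \cong Y^{b^{(Z)}}$, subject to $\sum_j s_j^{(Z)} = m\cdot b^{(Z)}$, while the shift $T$ sends $Z \mapsto TZ$ and $(s_j^{(Z)})_j \mapsto (s_{j-1}^{(Z)})_j$. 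Combining this $T$-equivariance of the decomposition with the structural constraints on the $\Delta_c$'s established above, I aim to conclude that the only multiplicity vectors $(s_j^{(Z)})$ compatible with an indecomposable summand of $X\wotimes{} Y$ are the trivial ones $(0,\dots,0)$ and $(m,\dots,m)$, forcing $\epsilon \in \{0,1\}$.

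The main obstacle will be this last conclusion. The $T$-action alone only tells us that the multiplicity tuples come in cyclic orbits, which is weaker than being constant; the $\Delta_c$'s being idempotent modulo $\mf{m}_{S_1}$ only pins down their ranks, not their values. Overcoming this will require a careful interplay between the divisibility-induced cancellations in the diagonal equations $\sum_r \epsilon_k(p,r)_y \epsilon_k(r,p)_y = \Delta_{k-p}$ and the global shift-equivariance: the coprimality of the $u_i$'s ensures each off-diagonal product $\epsilon_k(p,r)_y \epsilon_k(r,p)_y$ is divisible by a pair of coprime $u_i$'s, and I expect iterating this to pin down each $\Delta_c$ to $0$ or $I_m$ modulo $\mf{m}_{S_1}$, at which point $T$-equivariance makes the choice uniform across $c$.
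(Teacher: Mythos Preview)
Your proposal correctly identifies several of the needed ingredients: the pairwise non-isomorphism of the $T^iX$, the vanishing $\epsilon_k(i,j)\otimes\Bbbk=0$ for $i\neq j$, and the observation that $\epsilon_k(p,p)_y$ depends only on $c=k-p$, giving well-defined idempotents $\bar\Delta_c\in M_m(\Bbbk)$. But the argument breaks down exactly where you flag it. Nothing in the $\epsilon_y$-analysis forces $\bar\Delta_c$ to be $0$ or $I_m$: your ``iterated divisibility'' only pushes the off-diagonal products $\epsilon_k(p,r)_y\epsilon_k(r,p)_y$ into higher powers of $\mf m_{S_1}$, which merely reconfirms $\bar\Delta_c^2=\bar\Delta_c$ and places no further constraint on the rank. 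The $T$-equivariance of the Krull--Schmidt decomposition likewise only permutes the multiplicity vectors $(s_j^{(Z)})$ cyclically among the summands; it does not force any individual $\bar\Delta_c$ to be scalar, nor does it make the choice uniform across $c$.

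The missing idea, and the one the paper uses, is to exploit the \emph{locality} of $\Lambda=\End(Y)$ on the $\epsilon_x$ side. Since $Y\cong TY$, Proposition~\ref{thm:tensor_modx_mody}\ref{thm:tensor_modx_mody1} gives $(X\wotimes{}Y)_x\cong Y^{\oplus d}$, and each diagonal block $\epsilon_x(i,i)$ lies in the local ring $\Lambda$. Your off-diagonal vanishing shows $\epsilon_x(i,j)\in\rad\Lambda$ for $i\neq j$, so Lemma~\ref{thm:idempotentlemma} forces each $\epsilon_x(i,i)$ to be either a unit or in $\rad\Lambda$---this is the dichotomy you were unable to obtain from $M_m(\Bbbk)$. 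If some $\epsilon_x(q,q)$ is a unit, then every component $\epsilon_k(q,q)$ is invertible, and your own relation $\epsilon_k(p,p)_y=\Delta_{k-p}$ transports this invertibility to every $\epsilon_i(j,j)$, whence $\epsilon=1$. If instead every $\epsilon_x(i,i)\in\rad\Lambda$, then $\epsilon_x$ is a radical idempotent, hence zero, and Nakayama gives $\epsilon=0$. The $T$-equivariance of the decomposition is not needed.
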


    \begin{proof}
        Notice that since $u_i,u_j$ are relatively prime for $i\neq j$, if $\alpha: (T^jX)^m \to (T^iX)^m$ is a morphism, then $\alpha_k \otimes_{S_1}\Bbbk= 0$ for all $k \in \ZZ_d$. Indeed, if there exists $S_1$-homomorphisms satisfying $\beta u_i = u_j \gamma$, then neither of $\beta$ nor $\gamma$ can have unit entries.

        Set $Z = X \wotimes{} Y$ and let $\epsilon \in \End(Z)$ be an idempotent. It suffices to show that either $\epsilon = 0$ or $\epsilon = 1_Z$. 
        
        We first consider reduction modulo $(y)$ which gives us an idempotent $\epsilon_y \in \End(Z_y)\cong \End(\bigoplus_{i=1}^d(T^{1-i}X)^m)$. By the initial observation, for any $i\neq j$, the components of $\epsilon_y(i,j)$ have entries only in the maximal ideal of $S_1$. More specifically, Lemma \ref{thm:morphisms_mod_x/y}\ref{thm:morphisms_mod_y} gives us that
            \[\epsilon_y(i,j) = (\epsilon_k(2-i+k,2-j+k)_y)_{k=0}^{d-1},\]
        and therefore, $\epsilon_k(2-i+k,2-j+k)_y \otimes \Bbbk = 0$ for any $i\neq j$ and $k \in \ZZ_d$. Letting $i\neq j$ and $k$ vary over $\ZZ_d$, and lifting back to $S$, we have
            \begin{equation}\label{eq:rel_prime_1}
                \epsilon_k(i,j) \otimes \Bbbk = 0 \text{ for all } i,j,k \in \ZZ_d \text{ with } i\neq j.
            \end{equation}
            
        Next, we consider reduction modulo $(x)$. Note that since $\rank X = 1$, $\epsilon_x$ is an idempotent in $\End(Z_x) \cong \End(\bigoplus_{i=1}^d \zeta^{i-1}T^{1-i}Y)$. Furthermore, since $TY \cong Y$, we have that $Z_x$ is isomorphic to a direct sum of $d$ copies of $Y$. Let $\Lambda = \End(Y)$ which is a (likely non-commutative) local ring in the sense that $\Lambda/\rad \Lambda$ is a division ring.

        By \eqref{eq:rel_prime_1}, $\epsilon_k(i,j)_x$ is not an isomorphism for all $i,j,k \in \ZZ_d$ with $i \neq j$. It follows from Lemma \ref{thm:morphisms_mod_x/y}\ref{thm:morphisms_mod_x} that $\epsilon_x(i,j)$ is not an isomorphism of matrix factorizations for all $i \neq j$. Since $\Lambda$ is local, this implies that $\epsilon_x(i,j) \in \rad\Lambda$ for all $i \neq j$, and in particular, $\epsilon_x(i,k)\epsilon_x(k,j) \in \rad\Lambda$ for all $i,j,k$ such that $i\neq k$ or $j \neq k$. Hence Lemma \ref{thm:idempotentlemma} applies; the diagonal component $\epsilon_x(i,i)$ is an idempotent in the division ring $\Lambda/\rad\Lambda$ for each $i \in \ZZ_d$. In other words, for each $i \in \ZZ_d$, $\epsilon_x(i,i)$ is either an automorphism of $Y$ or in $\rad \Lambda$.

        First, consider the case when $\epsilon_x(i,i) \in \rad\Lambda$ for all $i \in \ZZ_d$. Since we showed that $\epsilon_x(i,j) \in \rad\Lambda$ for all $i\neq j$ above, this implies that $\epsilon_x \in \rad\End(Z_x)$. An idempotent in the radical must be zero and so we conclude that $\epsilon_x = 0$ in this case. It follows that $\epsilon \in \rad\End(Z)$ and, using the same reasoning, we have $\epsilon = 0$.
        
        Next, assume $\epsilon_x(q,q)$ is an automorphism for some $q \in \ZZ_d$. We will prove that this implies that $\epsilon_x(i,i)$ is an automorphism for all $i \in \ZZ_d$. Since $\epsilon_x(q,q)$ is an isomorphism, Lemma \ref{thm:morphisms_mod_x/y}\ref{thm:morphisms_mod_x} implies that $\epsilon_k(q,q)_x$ is an isomorphism for all $k \in \ZZ_d$ which in turn implies that $\epsilon_k(q,q)$ is an isomorphism for all $k \in \ZZ_d$.

        Let $i,j \in \ZZ_d$. To finish the proof, it suffices to show that $\epsilon_i(j,j)$ is an isomorphism. In this case, combined with \eqref{eq:rel_prime_1}, this would imply that the idempotent $\epsilon$ is an isomorphism (as it would be a unit plus a radical element) which is only possible if it is the identity morphism on $Z$. 
        
        To show the final claim, we return for a final time to reduction modulo $(y)$. Since $X$ has rank one, any endomorphism $\beta = (\beta_0,\beta_1,\dots,\beta_{d-1}) \in \End((T^{1-p} X)^m), p \in \ZZ_d$, has the property that $\beta_0 = \beta_1 = \cdots = \beta_{d-1}$. Thus, invoking Lemma \ref{thm:morphisms_mod_x/y}\ref{thm:morphisms_mod_y} again, we have 
            \[\epsilon_i(j,j)_y = \epsilon_{i+t}(2-j+t,2-j+t)_y \text{ for any }t \in \ZZ_d.\]
        In particular, for $t = q+j-2$, we have $\epsilon_i(j,j)_y = \epsilon_{i+t}(q,q)_y$, which is an isomorphism by assumption. By Nakayama's lemma, $\epsilon_i(j,j)$ is an isomorphism and this completes the proof.
    \end{proof}

    Using similar techniques one can also show the following.

    \begin{theorem}\label{thm:non_symm_otimes_rank_1}
        Let $u_1,u_2,\dots,u_{d-1},u_0 \in S_2$ and suppose $Y = (u_1,u_2,\dots,u_{d-1},u_0) \in \matfac{S_2}{d}{u_1u_2\cdots u_{d-1}u_0}$ is a rank one factorization of the product $u_1u_2\cdots u_{d-1}u_0$. If $X \in \matfac{S_1}{d}{f}$ satisfies $T^i(X) \not\cong X$ for all $i\neq 0$ in $\ZZ_d$, then $X \wotimes{} Y$ is indecomposable. \qed
    \end{theorem}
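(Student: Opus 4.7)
The plan is to adapt the strategy of Theorem \ref{thm:rel_prime} with the roles of $X$ and $Y$ interchanged. Let $Z = X \wotimes{} Y$ and suppose $\epsilon \in \End(Z)$ is an idempotent; the goal is to show $\epsilon = 0$ or $\epsilon = 1_Z$. First, I reduce modulo $(y)$. Since $\rank Y = 1$, Proposition \ref{thm:tensor_modx_mody}\ref{thm:tensor_modx_mody2} together with Lemma \ref{thm:scaled_by_units} gives $Z_y \cong \bigoplus_{j=1}^d T^{1-j}X$, a sum of $d$ pairwise non-isomorphic indecomposables by the hypothesis on $X$. By Lemma \ref{thm:rad_properties}\ref{thm:rad_properties_2} every off-diagonal component $\epsilon_y(i,j)$, $i\neq j$, lies in the radical, so Lemma \ref{thm:idempotentlemma} supplies idempotents $e_j \in \End(T^{1-j}X)$ with $\epsilon_y(j,j) - e_j \in \rad$. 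Each endomorphism ring $\End(T^{1-j}X)$ is local, so $e_j \in \{0,1\}$; let $J = \{j : e_j = 1\}$, so that $\epsilon_y(j,j)$ is an automorphism precisely when $j \in J$.

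The main obstacle is ruling out the case $\emptyset \subsetneq J \subsetneq \ZZ_d$, and this is where the rank-one hypothesis on $Y$ intervenes via the dual reduction. Modulo $(x)$, the diagonal piece $\epsilon_x(i,i)$ is an endomorphism of $(\zeta^{i-1}T^{1-i}Y)^n$, and the structure maps of $Y$ are multiplication by the nonzero (hence nonzerodivisor) elements $u_l \in S_2$. Consequently, the commutation relation $\gamma_{k-1} u_l = u_l \gamma_k$ satisfied by any endomorphism $\gamma$ of $(T^{1-i}Y)^n$ forces all $d$ components of $\gamma$ to agree as $n\times n$ matrices over $S_2$. Applying Lemma \ref{thm:morphisms_mod_x/y}\ref{thm:morphisms_mod_x}, this says that $\epsilon_k(i,i)_x$ is independent of $k \in \ZZ_d$, so after tensoring further with $\Bbbk$ the $n\times n$ matrix $e_i \coloneqq \epsilon_k(i,i)\otimes_S \Bbbk$ depends only on $i$.

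Now combine the two reductions. Suppose $J\ne\emptyset$ and pick $j_0 \in J$. Then $\epsilon_y(j_0,j_0)$ is an automorphism of matrix factorizations, so by Lemma \ref{thm:morphisms_mod_x/y}\ref{thm:morphisms_mod_y} each component $\epsilon_k(k+2-j_0,\,k+2-j_0)_y$ is an isomorphism; reducing to $\Bbbk$, the matrix $e_{k+2-j_0}$ is invertible for every $k$. As $k$ ranges over $\ZZ_d$ the index $k+2-j_0$ covers all of $\ZZ_d$, so $\rank_\Bbbk e_i = n$ for every $i$. Now fix any $j \in \ZZ_d$: for each $k$ the matrix $\epsilon_k(k+2-j,\,k+2-j)_y$ reduces modulo the maximal ideal of $S_1$ to the invertible matrix $e_{k+2-j}$ and is therefore itself an isomorphism by Nakayama. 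Hence every component of $\epsilon_y(j,j)$ is an isomorphism, so $\epsilon_y(j,j)$ is an automorphism and $j \in J$; thus $J = \ZZ_d$. Then $\epsilon_y$ is unit-diagonal plus radical off-diagonal, so it is a unit as well as an idempotent, forcing $\epsilon_y = 1_{Z_y}$; the Nakayama statement for homomorphisms of free $S$-modules recorded before Theorem \ref{thm:rel_prime} lifts this to $\epsilon = 1_Z$. In the complementary case $J = \emptyset$, the entire morphism $\epsilon_y$ lies in $\rad\End(Z_y)$ and is idempotent, so $\epsilon_y = 0$, and the same Nakayama argument yields $\epsilon = 0$. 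This completes the proof.
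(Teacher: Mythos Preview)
Your proof is correct and is precisely the role-swapped analogue of the proof of Theorem~\ref{thm:rel_prime} that the paper points to with the phrase ``using similar techniques'': the non-isomorphism of the shifts $T^{1-j}X$ replaces the relative-primeness hypothesis to force the off-diagonal pieces of $\epsilon_y$ into the radical, and the rank-one property of $Y$ (in place of that of $X$) makes the components $\epsilon_k(i,i)_x$ independent of $k$, which is exactly what is needed to propagate invertibility of one diagonal block of $\epsilon_y$ to all of them. The only blemish is notational: you use $e_j$ first for the idempotents in $\End(T^{1-j}X)$ supplied by Lemma~\ref{thm:idempotentlemma} and then immediately redefine $e_i$ as the $n\times n$ matrix $\epsilon_k(i,i)\otimes_S\Bbbk$; renaming one of these would avoid confusion.
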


   \newpage

    \subsection{Strongly indecomposable matrix factorizations}
    
    In this section we obtain additional indecomposability results by introducing the notion of \textit{strongly indecomposable}\footnote{An object $X$ is sometimes called strongly indecomposable if its endomorphism ring is local. Our definition implies this but is stronger in general.} matrix factorizations. We continue using the assumptions and notations in \eqref{notation2}.

    \begin{definition}\label{def:strongly_indecomp}
        Let $(X,\phi) \in \matfac{S}{d}{f}$. The factorization $X$ is said to be \textit{strongly indecomposable} if it satisfies the following two conditions.
            \begin{enumerate}[label = (\roman*)]
                \item \label{def:strongly_indecomp_1} If $(\alpha,\beta)$ is a pair of $S$-homomorphisms satisfying $\alpha\phi_j = \phi_k\beta$ for some $j \neq k$, then $\alpha \otimes \Bbbk = \beta \otimes \Bbbk = 0$.
                \item \label{def:strongly_indecomp_2} If $(\alpha,\beta)$ is a pair of $S$-homomorphisms satisfying $\alpha \phi_k = \phi_k \beta$ for some $k \in \ZZ_d$, then $\alpha \otimes \Bbbk = \xi \cdot 1_{(X_{k-1}\otimes \Bbbk)}$ and $\beta \otimes \Bbbk = \xi \cdot 1_{(X_k\otimes \Bbbk)}$ for some $\xi \in \Bbbk$.
            \end{enumerate}
    \end{definition}

    In general, the assumption that a factorization is strongly indecomposable is significantly more restrictive than indecomposability. The lemma below illustrates this fact. However, in special cases of interest, strongly indecomposable matrix factorizations are not hard to find. In particular, the main goal of this section is to show that the tensor product construction preserves strong indecomposability.

    \begin{lemma}\label{thm:si_corollary}
        Suppose $(X,\phi) \in \matfac{S}{d}{f}$ is strongly indecomposable and set $R=S/(f)$. Then the following hold.
            \begin{enumerate}[label = (\roman*)]
                \item\label{thm:si_corollary_1} $X$ is an indecomposable matrix factorization.
                \item\label{thm:si_corollary_2} $T^iX \not\cong X$ for all $i \neq 0$ in $\ZZ_d$.
                \item\label{thm:si_corollary_3} $\cok\phi_i$ is an indecomposable (maximal Cohen-Macaulay) $R$-module for all $i \in \ZZ_d$.
                \item\label{thm:si_corollary_4} $\End(X)/\rad\End(X) \cong \Bbbk$ and $\End_R(\cok\phi_i)/\rad\End_R(\cok\phi_i) \cong \Bbbk$ for all $i \in \ZZ_d$.
            \end{enumerate}
    \end{lemma}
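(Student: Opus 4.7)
The plan is to distill condition (ii) of Definition \ref{def:strongly_indecomp} into a ring homomorphism $\sigma: \End(X) \to \Bbbk$ that records the ``constant term'' of an endomorphism, and then exploit the $\mf m$-adic completeness of $S$ to identify $\ker\sigma$ with $\rad\End(X)$. This will handle part (i) and the first assertion of (iv) simultaneously; condition (i) of Definition \ref{def:strongly_indecomp} will then dispatch part (ii), while the same strategy, applied at the cokernel level, will deliver part (iii) and the second assertion of (iv).

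To construct $\sigma$, note that for any $\alpha = (\alpha_0, \dots, \alpha_{d-1}) \in \End(X)$ the defining relations $\alpha_{k-1}\phi_k = \phi_k\alpha_k$, together with Definition \ref{def:strongly_indecomp}\ref{def:strongly_indecomp_2}, produce scalars $\xi_k \in \Bbbk$ with $\alpha_{k-1}\otimes\Bbbk = \xi_k\cdot 1 = \alpha_k\otimes\Bbbk$; comparing the relations at consecutive indices forces all $\xi_k$ to coincide with a common value $\sigma(\alpha) \in \Bbbk$, and $\sigma$ is visibly a ring homomorphism. For (i), an idempotent $e \in \End(X)$ satisfies $\sigma(e) \in \{0,1\}$; if $\sigma(e) = 0$ then each $e_k$ has entries in $\mf m$, and the identity $e_k^n = e_k$ combined with the $\mf m$-adic separation of $S$ forces the entries of $e_k$ to lie in $\bigcap_n \mf m^n = 0$. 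The case $\sigma(e) = 1$ is symmetric via $1 - e$. For the first assertion of (iv), note that $\alpha \in \ker\sigma$ implies that $\beta\alpha\gamma$ has components with entries in $\mf m$ for any $\beta,\gamma \in \End(X)$, so $1 - \beta\alpha\gamma$ is invertible via a convergent geometric series; the reverse inclusion $\rad\End(X) \subseteq \ker\sigma$ is automatic since $\sigma$ surjects onto a field. For (ii), an isomorphism $\alpha: T^iX \to X$ with $i \not\equiv 0$ in $\ZZ_d$ satisfies $\alpha_{k-1}\phi_{k+i} = \phi_k\alpha_k$ with $k+i \neq k$, so Definition \ref{def:strongly_indecomp}\ref{def:strongly_indecomp_1} forces each $\alpha_k \otimes \Bbbk = 0$, contradicting the invertibility of the square matrices $\alpha_k$.

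For (iii), first observe that $(\phi_i, \phi_{i+1}\phi_{i+2}\cdots\phi_{i-1})$ is a classical two-fold matrix factorization of $f$, yielding a $2$-periodic $R$-free resolution of $\cok\phi_i$ and hence the maximal Cohen-Macaulay property. For indecomposability, given an idempotent $\bar e \in \End_R(\cok\phi_i)$, projectivity of $X_{i-1}$ lifts $\bar e$ to $e: X_{i-1} \to X_{i-1}$; since $e\phi_i$ maps into $\image\phi_i$ (because $\bar e$ is well-defined on the quotient), projectivity of $X_i$ combined with the injectivity of $\phi_i$ (from $f$ a nonzerodivisor) yields $e': X_i \to X_i$ with $e\phi_i = \phi_i e'$. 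Definition \ref{def:strongly_indecomp}\ref{def:strongly_indecomp_2} then supplies $\xi \in \Bbbk$ with $e\otimes\Bbbk = \xi\cdot 1 = e'\otimes\Bbbk$, which descends to $\bar e\otimes\Bbbk = \xi\cdot 1$ on $\cok\phi_i\otimes_R \Bbbk$; idempotency forces $\xi \in \{0,1\}$, and the direct summand $\image\bar e$ (respectively $\image(1 - \bar e)$) vanishes by Nakayama. The second assertion of (iv) follows from the same scalar: define $\tau: \End_R(\cok\phi_i) \to \Bbbk$ by $\tau(\bar\alpha) = \xi$, noting that two lifts of $\bar\alpha$ differ by a map factoring through $\phi_i$ and thus induce the same endomorphism on $\cok\phi_i\otimes\Bbbk$. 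The identification $\ker\tau = \rad\End_R(\cok\phi_i)$ proceeds as for $\sigma$, invoking the fact that a surjective $R$-endomorphism of a finitely generated $R$-module is bijective to promote surjectivity modulo $\mf m$ to invertibility. The principal obstacle is the lifting of $\bar e$ to the compatible pair $(e, e')$ at the single index $i$ in (iii); once this pair is in hand, the scalars extracted from strong indecomposability together with Nakayama and completeness carry the rest.
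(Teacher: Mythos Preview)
Your proof is correct and follows essentially the same strategy as the paper: both arguments extract from condition~\ref{def:strongly_indecomp_2} a ring homomorphism $\End(X)\to\Bbbk$ (and analogously $\End_R(\cok\phi_i)\to\Bbbk$) whose kernel is identified with the Jacobson radical, and both handle (ii) directly from condition~\ref{def:strongly_indecomp_1}. The only organizational difference is that the paper deduces (i) and (iii) as immediate consequences of (iv), whereas you prove them separately---your $\mf m$-adic separation argument for idempotents in (i) and the direct Nakayama argument in (iii) are pleasant but ultimately redundant once (iv) is in hand.
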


    \begin{proof}
        First notice that \ref{thm:si_corollary_2} follows directly from \ref{def:strongly_indecomp}\ref{def:strongly_indecomp_1}. Statements \ref{thm:si_corollary_3} and \ref{thm:si_corollary_1} follow from \ref{thm:si_corollary_4} so it suffices to prove the final two claims.

        To see the first isomorphism in \ref{thm:si_corollary_4}, let $\alpha \in \End(X)$. It follows from \ref{def:strongly_indecomp}\ref{def:strongly_indecomp_2} that there exists $\xi \in \Bbbk$ and $\alpha' \in \End(X)$ such that $\alpha = \xi \cdot 1_X + \alpha'$ and such that $\alpha'_k \otimes \Bbbk = 0$ for each $k \in \ZZ_d$. Since $\alpha'_k \otimes \Bbbk = 0$, the map $1 - \sigma \alpha'_k \tau$ is an isomorphism for any $S$-homomorphisms $\sigma,\tau$. In other words, the fact that $\alpha'_k \otimes \Bbbk = 0$ for all $k \in \ZZ_d$ implies that $\alpha' \in \rad\End(X)$. Thus, every element $\alpha \in \End(X)$ can be written uniquely as 
            \(\alpha = \xi\cdot 1_X + \alpha'\)
        for some $\xi \in \Bbbk$ and $\alpha' \in \rad\End(X)$. The ring map which sends $\alpha = \xi \cdot 1_X + \alpha' \mapsto \xi \in \Bbbk$ establishes the first isomorphism in \ref{thm:si_corollary_4}.

        To prove the second isomorphism, we fix $i \in \ZZ_d$ and let $g \in \End_R(M)$ where $M = \cok\phi_i$. The map $g$ lifts to a pair of $S$-homomorphisms $(\alpha,\beta)$ given by the diagram
            \[
            \begin{tikzcd}
                0 \rar &X_i \ar[d,dotted,"\beta"] \rar{\phi_i} &X_{i-1} \ar[d,dotted,"\alpha"] \rar &M \dar{g} \rar &0\\
                0 \rar &X_i \rar{\phi_i} &X_{i-1} \rar &M \rar &0.
            \end{tikzcd}
            \]
        By \ref{def:strongly_indecomp}\ref{def:strongly_indecomp_2}, there exists $\xi \in \Bbbk$ such that $\alpha = \xi \cdot 1_{X_{i-1}} + \alpha'$ and $\beta = \xi \cdot 1_{X_{i}} + \beta'$ for some $(\alpha',\beta')$ satisfying $\alpha'\otimes \Bbbk = 0$ and $\beta' \otimes \Bbbk = 0$. It follows that $g = \xi \cdot 1_M + g'$ for some $g' \in \End_R(M)$ satisfying $g' \otimes \Bbbk = 0$, that is, $\image g' \subseteq \mf n M$ where $\mf n$ denotes the maximal ideal of $S$. 
        
        Notice that if $g = \xi \cdot 1_M + g'$ with $\xi \neq 0$, then $\alpha$ and $\beta$, and consequently $g$, are each isomorphisms. In other words, the (two-sided) ideal $I = \{h \in \End_R(M) : \image h \subseteq \mf nM\}$ of $\End_R(M)$ consists exactly of the non-units of $\End_R(M)$. This implies that $\End_R(M)$ is a (possibly non-commutative) local ring and that $I = \rad\End_R(M)$. Once again, the ring homomorphism $\End_R(M) \to \Bbbk$ which sends $g = \xi \cdot 1_M + g' \mapsto \xi \in \Bbbk$ establishes the desired isomorphism.
    \end{proof}
    
    Items \ref{thm:si_corollary_3} and \ref{thm:si_corollary_4} of Lemma \ref{thm:si_corollary} are especially important consequences of strong indecomposability since, in general, the modules associated to an indecomposable $d$-fold matrix factorization (for $d > 2$) need not be indecomposable (see \cite[Example 5.5]{leuschke2023branched}). We will utilize Lemma \ref{thm:si_corollary} along with the following theorem to produce indecomposable maximal Cohen-Macaulay and Ulrich modules in Section \ref{sec:Ulrich}.

    \begin{theorem}\label{thm:strong_indecomp}
        Let $X \in \matfac{S_1}{d}{f}$ and $Y \in \matfac{S_2}{d}{g}$ be strongly indecomposable matrix factorizations. Then $X \wotimes{} Y$ is strongly indecomposable.
    \end{theorem}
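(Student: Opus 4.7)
The plan is to verify both axioms of Definition \ref{def:strongly_indecomp} for $Z = X \wotimes{} Y$ by lifting any commutative square for the tensor-product differentials $\Phi_j, \Phi_k$ to commutative squares for the $\phi$'s and $\psi$'s individually via the reduction functors $(-)_x$ and $(-)_y$, then invoking strong indecomposability of $X$ and $Y$. Fix a pair of $S$-linear maps $(A, B)$ with $A\Phi_j = \Phi_k B$. Using the decomposition $Z_\ell = \bigoplus_{i=1}^d X_{i-1} \otimes Y_{\ell - i}$, write $A$ and $B$ as matrices of blocks $A(r, s)$ and $B(r, s)$, and after choosing bases for each $X_{i-1}$, view each block as an $n \times n$ matrix whose $(a, b)$-entry is an $S$-linear map between the relevant $Y$-pieces.

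First I would reduce modulo $(x)$. By Proposition \ref{thm:tensor_modx_mody}\ref{thm:tensor_modx_mody1} together with the formula \eqref{eq:tensor}, $\Phi_{j, x}$ and $\Phi_{k, x}$ become block-diagonal with $i$th diagonal entry $\zeta^{i-1}\psi_{j+1-i}^{\oplus n}$ and $\zeta^{i-1}\psi_{k+1-i}^{\oplus n}$ respectively. Comparing the $(r, s)$-block of both sides of $A_x \Phi_{j, x} = \Phi_{k, x} B_x$ entrywise yields, for each $a, b$, a commutative square
\[
A_x(r, s)_{ab} \cdot \psi_{j+1-s} \;=\; \zeta^{r-s} \, \psi_{k+1-r} \cdot B_x(r, s)_{ab}.
\]
Applying strong indecomposability of $Y$: if $r - s \not\equiv k - j \pmod d$, then $A_x(r,s)_{ab} \otimes \Bbbk = 0 = B_x(r, s)_{ab} \otimes \Bbbk$; if $r - s \equiv k - j \pmod d$, then both entries agree with a common scalar $\xi_{ab}^{(r, s)} \in \Bbbk$ times the identity. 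Symmetrically, reduction modulo $(y)$ turns $\Phi_{j, y}$ and $\Phi_{k, y}$ into ``circulant-shift'' matrices with $(l, l+1)$-blocks $\phi_l^{\oplus m}$, and the analogous entrywise computation together with strong indecomposability of $X$ shows that $A_y(r, s)_{ab} \otimes \Bbbk = 0$ (and similarly for $B$) unless $r \equiv s \pmod d$, with a scalar-times-identity statement on the diagonal.

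For condition \ref{def:strongly_indecomp_1} (the case $j \neq k$): since $k - j \not\equiv 0 \pmod d$, the mod-$(x)$ restriction ``$r - s \equiv k - j$'' and the mod-$(y)$ restriction ``$r \equiv s$'' cannot hold simultaneously, so $A(r, s) \otimes \Bbbk = 0 = B(r, s) \otimes \Bbbk$ for every $(r, s)$, giving $A \otimes \Bbbk = 0 = B \otimes \Bbbk$.

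For condition \ref{def:strongly_indecomp_2} (the case $j = k$), both reductions already force $A(r, s) \otimes \Bbbk = 0$ and $B(r, s) \otimes \Bbbk = 0$ when $r \neq s$. For the diagonal blocks, the mod-$(x)$ view shows $A(i, i) \otimes \Bbbk = \Xi_A^{(i)} \otimes I_m$ for an $n \times n$ scalar matrix $\Xi_A^{(i)} = (\xi_{ab}^{(i)})$, while the mod-$(y)$ view shows $A(i, i) \otimes \Bbbk = I_n \otimes H_A^{(i)}$ for some $m \times m$ scalar matrix $H_A^{(i)}$. The main obstacle will be reconciling these two descriptions: equating the two presentations of the same $\Bbbk$-linear endomorphism forces $\Xi_A^{(i)} = \lambda_A^{(i)} I_n$ and $H_A^{(i)} = \lambda_A^{(i)} I_m$ for a single scalar $\lambda_A^{(i)} \in \Bbbk$, so $A(i, i) \otimes \Bbbk = \lambda_A^{(i)} \cdot 1$. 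Running the same analysis for $B$ gives scalars $\lambda_B^{(i)}$; the mod-$(x)$ square identifies $\lambda_A^{(i)} = \lambda_B^{(i)}$, while the mod-$(y)$ square at ``diagonal'' indices identifies $\lambda_A^{(i)} = \lambda_B^{(i+1)}$. Cycling through $\ZZ_d$ forces all these scalars to coincide in a common $\xi \in \Bbbk$, yielding $A \otimes \Bbbk = \xi \cdot 1$ and $B \otimes \Bbbk = \xi \cdot 1$ and completing the verification.
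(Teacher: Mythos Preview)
Your proposal is correct and follows essentially the same approach as the paper: reduce the equation $A\Phi_j=\Phi_k B$ modulo $(x)$ and modulo $(y)$ to isolate the $\psi$- and $\phi$-contributions, apply the strong indecomposability of $Y$ and $X$ blockwise, and then combine the two reductions to force the required vanishing or scalar behaviour. The only real difference is organizational: the paper packages the ``two views of $A(i,i)\otimes\Bbbk$'' argument (your reconciliation of $\Xi_A^{(i)}\otimes I_m$ with $I_n\otimes H_A^{(i)}$) into a separate Lemma~\ref{thm:strong_indecomp_lemma}, and takes the $(s,t)$-component of $A\Phi_j=\Phi_k B$ \emph{before} reducing, whereas you reduce the full equation first and then read off blocks; the resulting identities and the cycling argument $\lambda_A^{(i)}=\lambda_B^{(i)}=\lambda_A^{(i-1)}$ are the same.
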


    To prove Theorem \ref{thm:strong_indecomp} we need a lemma. Note that the faithful embeddings $\matfac{S_1}{d}{f} \hookrightarrow \matfac{S}{d}{f}$ and $\matfac{S_2}{d}{g} \hookrightarrow \matfac{S}{d}{g}$ preserve strong indecomposability.

    \begin{lemma}\label{thm:strong_indecomp_lemma}
        Let $(X,\phi) \in \matfac{S_1}{d}{f}$ and $(Y,\psi) \in \matfac{S_2}{d}{g}$ be strongly indecomposable of rank $n$ and $m$, respectively, and let $j,k \in \ZZ_d$.
            \begin{enumerate}[label = (\roman*)]
                \item\label{thm:strong_indecomp_lemma_1} Assume $j \neq k$. If $\alpha$ and $\beta$ are $S$-homomorphisms satisfying $\alpha(\phi_j \otimes 1_{Y_{p}}) = (\phi_k \otimes 1_{Y_{p'}})\beta$ for some $p,p' \in \ZZ_d$, then $\alpha \otimes \Bbbk = \beta \otimes \Bbbk = 0$. Similarly, if $\alpha(1_{X_p}\otimes \psi_j) = (1_{X_{p'}}\otimes \psi_k)\beta$ for some $p,p' \in \ZZ_d$, then $\alpha \otimes \Bbbk = \beta \otimes \Bbbk = 0$.
                \item\label{thm:strong_indecomp_lemma_2} If $\alpha,\beta$, and $\gamma$ are $S$-homomorphisms satisfying $\alpha(\phi_j \otimes 1_{Y_{k-1}}) = (\phi_j \otimes 1_{Y_{k-1}})\beta$ and $\alpha(1_{X_{j-1}} \otimes \psi_k) = (1_{X_{j-1}}\otimes \psi_k)\gamma$, then there exists $\xi \in \Bbbk$ such that $\alpha \otimes \Bbbk, \beta\otimes \Bbbk$, and $\gamma \otimes \Bbbk$ are each given by multiplication by $\xi$.
            \end{enumerate}
    \end{lemma}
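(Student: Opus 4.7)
The plan is to reduce the $S$-homomorphism equations modulo $(x)$ or $(y)$, so that the strong indecomposability hypotheses on $X$ and $Y$ (Definition \ref{def:strongly_indecomp}) become directly applicable to the components of $\alpha, \beta, \gamma$. Throughout, I will use that $\Bbbk = S/\mf n$ is obtained by first reducing modulo $(y)$ and then modulo the maximal ideal of $S_1$ (and symmetrically via $S_2$), so it suffices to verify the claimed vanishings or scalar identities for the reductions $(-)_y$ or $(-)_x$.

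For part \ref{thm:strong_indecomp_lemma_1}, consider the equation $\alpha(\phi_j\otimes 1_{Y_p}) = (\phi_k\otimes 1_{Y_{p'}})\beta$ and apply $(-)_y$. After choosing an $S_2$-basis of $Y_p$, the morphism $(\phi_j\otimes 1_{Y_p})_y$ identifies with the block diagonal $S_1$-morphism $\bigoplus_{i=1}^m \phi_j$, and similarly for $\phi_k$. Writing $\alpha_y$ and $\beta_y$ as $m\times m$ block matrices of $S_1$-homomorphisms and comparing blocks, the equation decouples into $\alpha_y(i,i')\phi_j = \phi_k\beta_y(i,i')$ for every $(i,i')$. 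Since $j\ne k$, Definition \ref{def:strongly_indecomp}\ref{def:strongly_indecomp_1} applied to $X$ forces each such component to vanish modulo the maximal ideal of $S_1$, hence $\alpha\otimes\Bbbk = \beta\otimes\Bbbk = 0$. The $\psi$ case is symmetric: apply $(-)_x$ and use the strong indecomposability of $Y$.

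For part \ref{thm:strong_indecomp_lemma_2} I would apply the same strategy to each equation separately, but now invoking Definition \ref{def:strongly_indecomp}\ref{def:strongly_indecomp_2}. Reducing the first equation modulo $(y)$ produces scalars $\xi_{i,i'}\in\Bbbk$ with $\alpha_y(i,i')\otimes\Bbbk = \xi_{i,i'}\cdot 1$ and $\beta_y(i,i')\otimes\Bbbk = \xi_{i,i'}\cdot 1$ for every $(i,i')$; assembling these blocks identifies $\alpha\otimes\Bbbk$ with $1_{X_{j-1}\otimes\Bbbk}\otimes\Xi$ on $(X_{j-1}\otimes\Bbbk)\otimes_\Bbbk(Y_{k-1}\otimes\Bbbk)$, where $\Xi = (\xi_{i,i'})\in\End_\Bbbk(Y_{k-1}\otimes\Bbbk)$. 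Reducing the second equation modulo $(x)$ and invoking strong indecomposability of $Y$ similarly yields scalars $\eta_{l,l'}\in\Bbbk$ and identifies $\alpha\otimes\Bbbk$ with $H\otimes 1_{Y_{k-1}\otimes\Bbbk}$, where $H = (\eta_{l,l'})\in\End_\Bbbk(X_{j-1}\otimes\Bbbk)$.

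The main obstacle is to reconcile these two tensor descriptions of the single endomorphism $\alpha\otimes\Bbbk$. Setting $1\otimes\Xi = H\otimes 1$ and comparing the coefficient of $e_p\otimes f_q$ in the image of each basis vector $e_i\otimes f_{i'}$ will force $\xi_{q,i'}=0$ whenever $q\ne i'$ and $\eta_{p,i}=0$ whenever $p\ne i$, while the diagonal entries satisfy $\xi_{i',i'}=\eta_{i,i}$ for all $i,i'$ and hence share a common value $\xi\in\Bbbk$. This yields $\alpha\otimes\Bbbk = \xi\cdot 1$, after which $\beta\otimes\Bbbk = \xi\cdot 1$ and $\gamma\otimes\Bbbk = \xi\cdot 1$ follow from the assembled expressions produced above. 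This bookkeeping is the technical heart of the lemma, as opposed to the essentially routine reductions that turn each hypothesis into a componentwise instance of Definition \ref{def:strongly_indecomp}.
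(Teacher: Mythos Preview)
Your proposal is correct and follows the same overall strategy as the paper: decompose into blocks so that $\phi_j\otimes 1$ (resp.\ $1\otimes\psi_k$) acts diagonally, apply Definition \ref{def:strongly_indecomp} componentwise, and for part \ref{thm:strong_indecomp_lemma_2} reconcile the two resulting descriptions of $\alpha\otimes\Bbbk$.

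There is one organizational difference worth noting in part \ref{thm:strong_indecomp_lemma_2}. The paper works throughout with a \emph{single} block decomposition (indexed by a basis of $X_{j-1}$), so that $1_{X_{j-1}}\otimes\psi_k$ is block diagonal while $\phi_j\otimes 1_{Y_{k-1}}$ is not; to handle the $\phi$-equation it then extracts ``transverse'' submatrices $\alpha_{s,t}$ (collecting the $(s,t)$-entry of every block) and shows these satisfy $\alpha_{s,t}\phi_j=\phi_j\beta_{s,t}$. You instead use \emph{two} block decompositions, one adapted to each equation, and then reconcile by invoking the linear-algebra fact that $1\otimes\Xi = H\otimes 1$ forces both $\Xi$ and $H$ to be scalar. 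Your version is a bit more conceptual and avoids the transverse-submatrix bookkeeping; the paper's version keeps everything in one coordinate system. Both arrive at the same scalars by the same comparison $\xi_{q,i'}\delta_{p,i}=\eta_{p,i}\delta_{q,i'}$, just phrased differently. Also, the paper does not explicitly reduce modulo $(x)$ or $(y)$ but instead uses the block structure over $S$ directly (invoking that the embeddings $\matfac{S_i}{d}{-}\hookrightarrow\matfac{S}{d}{-}$ preserve strong indecomposability); your reduction step is harmless since $\alpha\otimes\Bbbk = \alpha_y\otimes\Bbbk = \alpha_x\otimes\Bbbk$.
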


    \begin{proof}
        By identifying $X_j \otimes Y_p$ and $X_{j-1}\otimes Y_p$ with $X_j^{\oplus m}$ and $X_{j-1}^{\oplus m}$, respectively, we may view $\phi_j\otimes 1_{Y_p}$ as an $m \times m$ block diagonal map. Similarly, we view $\phi_k \otimes 1_{Y_{p'}}: X_k^{\oplus m} \to X_k^{\oplus m}$ as an $m \times m$ block diagonal map. If we view the components of $\alpha$ and $\beta$ with respect to these direct sum decompositions, we find that $\alpha(u,v)\phi_j = \beta(u,v)\phi_k$ for each $1 \leq u,v \leq m$. Since $X$ is strongly indecomposable, the first statement of \ref{thm:strong_indecomp_lemma_1} now follows. The second statement of \ref{thm:strong_indecomp_lemma_1} uses the same argument.

        For $\ref{thm:strong_indecomp_lemma_2}$, we first identify $X_{j-1}\otimes Y_k \cong Y_k^{\oplus n}$ and $X_{j-1}\otimes Y_{k-1} \cong Y_{k-1}^{\oplus n}$. With respect to these bases, $1_{X_{j-1}} \otimes \psi_k$ is an $n\times n$ block diagonal map. If we write $\alpha$ and $\beta$ with respect to these decompositions, it follows that $\alpha(u,v)\psi_k = \psi_k \gamma(u,v)$ for all $1\leq u,v \leq n$. Since $Y$ is strongly indecomposable, there exists $\xi_{u,v} \in \Bbbk$ such that, for each pair $1\leq u,v \leq n$, $\alpha(u,v) \otimes \Bbbk$ and $\gamma(u,v) \otimes \Bbbk$ are both given by multiplication by $\xi_{u,v}$.

        Next, we consider $\phi_j \otimes 1_{Y_{k-1}}$ with respect to the same bases as in the previous paragraph, namely, $\phi_j\otimes 1_{Y_{k-1}}: Y_{k-1}^{\oplus n} \to Y_{k-1}^{\oplus n}$. Note that with these decompositions, $\phi_j\otimes 1_{Y_{k-1}}$ is not diagonal. Instead, the $(u,v)$-component is given by $\phi_j(u,v)1_{Y_{k-1}}$ for any $1 \leq u,v\leq n$. With this setup, the equation $\alpha(\phi_j \otimes 1_{Y_{k-1}}) = (\phi_j \otimes 1_{Y_{k-1}})\beta$ implies that   
            \begin{equation}\label{eq:strong_indecomp_lemma}
                \sum_{i=1}^n \alpha(u,i)\phi_j(i,v) = \sum_{i=1}^n \phi_j(u,i)\beta(i,v)
            \end{equation}
        for any $1 \leq u,v \leq n$. Let $\alpha_{s,t}$ denote the submatrix of $\alpha$ formed by collecting the $(s,t)$ entry of each block $\alpha(u,v)$, $1\leq u,v\leq n$. Similarly, we define $\beta_{s,t}$. By considering the $(s,t)$ entry of either side, equation \eqref{eq:strong_indecomp_lemma} implies that $\alpha_{s,t}\phi_j = \phi_j\beta_{s,t}$. Since $X$ is strongly indecomposable, there exists $\xi_{s,t} \in \Bbbk$ such that $\alpha_{s,t}$ and $\beta_{s,t}$ are both given by multiplication by $\xi_{s,t}$. It follows that $\alpha(u,u)\otimes \Bbbk = \alpha(v,v) \otimes \Bbbk = \beta(u,u) \otimes \Bbbk = \beta(v,v) \otimes \Bbbk$ for any $1\leq u,v \leq n$, and $\alpha(u,v) \otimes \Bbbk = \beta(u,v) \otimes \Bbbk = 0$ for all $u\neq v$. The desired conclusion of \ref{thm:strong_indecomp_lemma_2} now follows by combining this with the analysis of $\alpha$ and $\gamma$ above.
        
    \end{proof}

    \begin{proof}[Proof of Theorem \ref{thm:strong_indecomp}]
        Set $X \wotimes{} Y = (X\wotimes{} Y, \Phi)$ and let $j,k \in \ZZ_d$. Assume that $\alpha$ and $\beta$ are $S$-homomorphisms satisfying
            \begin{equation}\label{eq:strong_indecomp_1}
                \alpha\Phi_j = \Phi_k\beta.
            \end{equation}
        Write $\alpha,\beta,\Phi_j,$ and $\Phi_k$ with respect to the direct sum decompositions given in \eqref{eq:tensor}. Then the $(s,t)$-component of either side of \eqref{eq:strong_indecomp_1} gives
            \begin{align*}
                \alpha(s,t-1)(\phi_{t-1}\otimes 1_{Y_{j-t+1}}) + \alpha(s,t)(\zeta^{t-1}1_{X_{t-1}}\otimes \psi_{j-t+1}) = \\
                (\zeta^{s-1}1_{X_{s-1}}\otimes \psi_{k-s+1})\beta(s,t) + (\phi_s \otimes 1_{Y_{k-s}})\beta(s+1,t)
            \end{align*}
        Reducing this modulo $y$, we obtain the equation
            \begin{equation}\label{eq:strong_indecomp_2}
                \alpha(s,t-1)_y(\phi_{t-1}\otimes 1_{Y_{j-t+1}})_y = (\phi_s \otimes 1_{Y_{k-s}})_y\beta(s+1,t)_y,
            \end{equation}
        while reducing modulo $x$ gives
            \begin{equation}\label{eq:strong_indecomp_3}
                \alpha(s,t)_x(\zeta^{t-1}1_{X_{t-1}}\otimes \psi_{j-t+1})_x = (\zeta^{s-1}1_{X_{s-1}}\otimes \psi_{k-s+1})_x\beta(s,t)_x.
            \end{equation}
        We consider three cases in order to show that the pair $(\alpha,\beta)$ satisfies both conditions of Definition \ref{def:strongly_indecomp}.
            \begin{enumerate}
                \item \label{thm:strong_indecomp_case1} 
                    Assume that $s \neq t-1$ in $\ZZ_d$. Then, by applying Lemma \ref{thm:strong_indecomp_lemma}\ref{thm:strong_indecomp_lemma_1} to \eqref{eq:strong_indecomp_2}, we get that 
                        \[\alpha(s,t-1)_y \otimes \Bbbk = 0 = \beta(s+1,t)_y \otimes \Bbbk.\]
                    This immediately implies that $\alpha(s,t-1)\otimes \Bbbk = 0 = \beta(s+1,t)\otimes \Bbbk$. Thus, for any $s' \neq t'$, we have that $\alpha(s',t') \otimes \Bbbk = 0 = \beta(s',t') \otimes \Bbbk$.

                \item \label{thm:strong_indecomp_case2} 
                    Assume that $j \neq k$ and set $s=t$ in \eqref{eq:strong_indecomp_3}. In this case, we have
                        \[\alpha(s,s)_x(\zeta^{s-1}1_{X_{s-1}}\otimes \psi_{j-s+1})_x = (\zeta^{s-1}1_{X_{s-1}}\otimes \psi_{k-s+1})_x\beta(s,s)_x.\]
                    Again, we apply Lemma \ref{thm:strong_indecomp_lemma}\ref{thm:strong_indecomp_lemma_1}. It follows that, when $j \neq k$, $\alpha(s,s) \otimes \Bbbk = 0 = \beta(s,s) \otimes \Bbbk$ for all $s$. This, combined with \eqref{thm:strong_indecomp_case1}, establishes the first requirement of Definition \ref{def:strongly_indecomp}.

                \item \label{thm:strong_indecomp_case3}
                    Assume that $j = k$. Setting $s=t$ in \eqref{eq:strong_indecomp_3}, we obtain the equation
                        \[\alpha(s,s)_x(\zeta^{s-1}1_{X_{s-1}}\otimes \psi_{j-s+1})_x = (\zeta^{s-1}1_{X_{s-1}}\otimes \psi_{j-s+1})_x\beta(s,s)_x.\]
                    Since $j=k$, we may apply Lemma \ref{thm:strong_indecomp_lemma}\ref{thm:strong_indecomp_lemma_2} to conclude that $\alpha(s,s) \otimes \Bbbk$ and $\beta(s,s) \otimes \Bbbk$ are both given by multiplication by a common element $\xi_1 \in \Bbbk$.

                    On the other hand, if we set $s=t-1$ in \eqref{eq:strong_indecomp_2}, we get the equation
                        \[\alpha(s,s)_y(\phi_{s}\otimes 1_{Y_{j-s}})_y = (\phi_s \otimes 1_{Y_{j-s}})_y\beta(s+1,s+1)_y\]
                    which implies that $\alpha(s,s) \otimes \Bbbk$ and $\beta(s+1,s+1) \otimes \Bbbk$ are both given by multiplication by an element $\xi_2 \in \Bbbk$. Letting $s$ vary over $\ZZ_d$, we find that $\xi_1 = \xi_2$ and so the second condition of Definition \ref{def:strongly_indecomp} is satisfied. This completes the proof.
            \end{enumerate}
    \end{proof}

    \begin{example}\label{ex:si_rank_one}
        Suppose $\Bbbk$ is a field containing a primitive third root of unity, $\zeta \in \Bbbk$. Let $\{f_1,f_2,f_3\}$ be pairwise relatively prime elements of $S_1 = \Bbbk\llbracket x_1,\dots,x_a\rrbracket$ and $\{g_1,g_2,g_3\}$ pairwise relatively prime elements of $S_2 = \Bbbk\llbracket y_1,\dots,y_b\rrbracket$. Then $X = (f_1,f_2,f_3) \in \matfac{S_1}{3}{f_1f_2f_3}$ and $Y = (g_1,g_2,g_3) \matfac{S_2}{3}{g_1g_2g_3}$ are each rank one strongly indecomposable reduced matrix factorizations. By Theorem \ref{thm:strong_indecomp}, the tensor product
            \[X \wotimes{} Y = \left(\begin{pmatrix}
                g_1 & f_1 & 0\\
                0   & \zeta g_3 & f_2\\
                f_3 & 0 & \zeta^2 g_2
            \end{pmatrix},
            \begin{pmatrix}
                g_2 & f_1 & 0\\
                0   & \zeta g_1 & f_2\\
                f_3 & 0 & \zeta^2 g_3
            \end{pmatrix},
            \begin{pmatrix}
                g_3 & f_1 & 0\\
                0   & \zeta g_2 & f_2\\
                f_3 & 0 & \zeta^2 g_1
            \end{pmatrix}\right)\]
        is a strongly indecomposable reduced matrix factorization of $f_1f_2f_3 + g_1g_2g_3$ with rank three.

        More generally, suppose $f$ can be written in the form 
            \[f = \sum_{i=1}^N f_{i1}f_{i2}\cdots f_{id}\]
        where $\{f_{i1},\dots,f_{id}\}$ are pairwise relatively prime elements of a power series ring over $\Bbbk$ such that each term of the sum comes from a distinct set of variables. Then the tensor product $\bigotimes_{i=1}^N (f_{i1},\dots,f_{id})$ is a strongly indecomposable $d$-fold matrix factorization of $f$ of rank $d^{N-1}$.
    \end{example}

\section{Constructing indecomposable Ulrich modules}\label{sec:Ulrich}

    The goal of this section is to construct \textit{Ulrich modules} (and other maximal Cohen-Macaulay modules) over a hypersurface domain using the tensor product construction from Section \ref{sec:tensor_prod}. Several of the key results and methods of this section can be found either explicitly or implicitly in \cite{herzog1991linear}. For instance, they prove the existence of Ulrich modules in greater generality (over any \textit{strict complete intersection}). Our approach produces Ulrich modules over a hypersurface domain using only elementary properties of the tensor product of matrix factorizations and \cite[Proposition 5.6]{eisenbud1980homological} (see also \cite[Remark 8.8]{leuschke2012cohen}). Using the results of Section \ref{sec:disjoint_vars}, we are to prove that the Ulrich modules produced are indecomposable over hypersurface rings of a certain form. 
    
    Let $(Q,\mf n)$ be a local ring and $f \in \mf n$ a nonzerodivisor. Assume further that $Q$ is a $\ZZ[\zeta]$-algebra where $\zeta \in \CC$ is a primitive $d^{\textit{th}}$ root of unity. Recall that a matrix factorization $(X,\phi) \in \matfac{Q}{d}{f}$ is reduced if $\image \phi_k \subseteq \mf n X_{k-1}$ for each $k \in \ZZ_d$. This means that, after choosing bases, the entries of $\phi_k$ lie in the maximal ideal of $Q$ for each $k \in \ZZ_d$.

    \begin{proposition}\label{thm:det_formula_modules}
        Suppose that $I \subset Q$ is an ideal such that $f \in I^d$ for some $d \ge 2$. If $f$ can be written as $f = \sum_{i=1}^N f_{i1}f_{i2}\cdots f_{id}$ for $f_{ij} \in I$ and some $N \ge 2$, then there exists a reduced matrix factorization $(X,\phi) \in \matfac{Q}{d}{f}$ such that
        \begin{enumerate}[label = (\roman*)]
            \item $\rank X = d^{N-1}$, and
            \item $\det \phi_k = \pm f^{d^{N-2}}$ for any $k \in \ZZ_d$.
        \end{enumerate}
    \end{proposition}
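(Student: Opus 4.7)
The plan is to exhibit $X$ as an iterated tensor product of rank-one $d$-fold matrix factorizations. For each $i \in \{1, \dots, N\}$, write $F_i \coloneqq f_{i1}f_{i2}\cdots f_{id}$ and let $X_i = (f_{i1}, f_{i2}, \dots, f_{id}) \in \matfac{Q}{d}{F_i}$ be the rank-one matrix factorization of $F_i$. Set $X \coloneqq X_1 \wotimes{} X_2 \wotimes{} \cdots \wotimes{} X_N$. By construction $X \in \matfac{Q}{d}{\sum_i F_i} = \matfac{Q}{d}{f}$, and because each entry of every component $\phi_k$ of $X$ is, up to a power of $\zeta$, one of the $f_{ij} \in I \subseteq \mf n$, the factorization $X$ is reduced. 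For the rank, the formula $\rank(A \wotimes{} B) = d \cdot \rank A \cdot \rank B$ noted in the paragraph preceding Proposition \ref{thm:det_formula}, applied inductively starting from $\rank X_i = 1$, gives $\rank X = d^{N-1}$.

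For the determinant, the natural approach is to invoke Proposition \ref{thm:det_formula} inductively on the number of tensor factors. The obstacle is that Proposition \ref{thm:det_formula} requires both elements being factored to be nonzerodivisors, whereas in $Q$ the partial sums $F_1 + \cdots + F_i$ and even the individual $F_i$ may fail to be nonzerodivisors. I would circumvent this by arguing universally. Introduce indeterminates $T_{ij}$ over $\ZZ[\zeta]$, set $R \coloneqq \ZZ[\zeta][T_{ij} : 1 \le i \le N,\, 1 \le j \le d]$, and form the analogous tensor product $\widetilde X \coloneqq \widetilde X_1 \wotimes{} \cdots \wotimes{} \widetilde X_N$ over $R$ with $\widetilde X_i = (T_{i1}, \dots, T_{id})$ and $\widetilde F_i \coloneqq T_{i1}\cdots T_{id}$. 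Since $R$ is a domain, every partial sum $\widetilde F_1 + \cdots + \widetilde F_i$ and each $\widetilde F_{i+1}$ is a nonzerodivisor, so Proposition \ref{thm:det_formula} applies at every step of the induction. The base case $i = 2$ gives $\det \widetilde \phi_k = \pm(\widetilde F_1 + \widetilde F_2)^1$, and the inductive step promotes rank $d^{i-2}$ with determinant $\pm(\widetilde F_1 + \cdots + \widetilde F_{i-1})^{d^{i-3}}$ to rank $d^{i-1}$ with determinant $\pm(\widetilde F_1 + \cdots + \widetilde F_i)^{d^{i-2}}$, yielding $\det \widetilde \phi_k = \pm(\widetilde F_1 + \cdots + \widetilde F_N)^{d^{N-2}}$ at stage $N$. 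Specializing along the $\ZZ[\zeta]$-algebra homomorphism $R \to Q$ sending $T_{ij} \mapsto f_{ij}$ transports this polynomial identity from $R$ to $Q$, giving $\det \phi_k = \pm f^{d^{N-2}}$.

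The main obstacle, as indicated, is the zero-divisor issue; the universal polynomial ring trick resolves it cleanly, after which both the rank and determinant claims reduce to routine inductions using results already established in the paper.
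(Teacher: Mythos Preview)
Your approach is essentially the paper's: take $X$ to be the iterated tensor product of the rank-one factorizations $(f_{i1},\dots,f_{id})$ and invoke Proposition~\ref{thm:det_formula} for the determinant. The paper's proof simply asserts that the determinant formula follows from Proposition~\ref{thm:det_formula} without commenting on the nonzerodivisor hypothesis for the individual $F_i$ and the partial sums; your passage to the universal polynomial ring $\ZZ[\zeta][T_{ij}]$ is a clean way to make that step rigorous in the stated generality. In all of the paper's subsequent applications $Q$ is a regular local ring, hence a domain, so the issue never actually bites there---but your version is more careful at essentially no extra cost.
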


    \begin{proof}
        For each integer $1 \leq i \leq N$, let $Y_i = (f_{i1},f_{i2},\dots,f_{id})$ be the rank one matrix factorization of the product $f_i \coloneqq f_{i1}f_{i2}\cdots f_{id}$ with $d$ factors. In other words, $Y_i \in \matfac{Q}{d}{f_i}$ for each $1 \leq i \leq N$. Then the tensor product $X = \bigotimes_{i=1}^N Y_i$ is a reduced matrix factorization of $f = \sum_{i=1}^N f_i$ of rank $d^{N-1}$. The determinant formula follows from Proposition \ref{thm:det_formula}.
    \end{proof}

    A finitely generated module $M$ over a local ring $(A,\mf m)$ is called \textit{maximal Cohen-Macaulay} (MCM) if $\depth_A M = \dim A$. We will let $\mu_A(M)$ denote the number of elements in a minimal generating set of $M$ and the Hilbert-Samuel multiplicity of an $A$-module will be denoted $e_A(M)$. If $f \in \mf m$, $\ord(f)$ will denote the order of $f$, that is, largest integer $d$ such that $f \in m^d$. Recall that if $A$ is a regular local ring and $f \in \mf m$ has order $d$, then the multiplicity of the ring $A/(f)$ is exactly $d=\ord(f)$ (see \cite[Example 11.2.8]{huneke2006integral}).
    
    If $(Q,\mf n)$ is a Cohen-Macaulay local ring ($\depth_Q Q = \dim Q)$ and $f \in \mf n$ is a nonzerodivisor, then every module associated to a matrix factorization $(X,\phi) \in \matfac{Q}{d}{f}$ defines an MCM $R = Q/(f)$-module. More specifically, $\cok (\phi_k\phi_{k+1}\cdots\phi_{k+r-1})$ is an MCM $R$-module for each $k \in \ZZ_d$ and $1 \leq r \leq d$; this follows directly from the Auslander–Buchsbaum formula.

    \begin{proposition}\label{thm:MCMs_from_tensoring}
        Assume that $(Q,\mf n)$ is a regular local ring, $f \in \mf n^2$ is irreducible, and set $R = Q/(f)$. Let $d = \ord(f)$ and write $f = \sum_{i=1}^N f_{i1}f_{i2}\cdots f_{id}$ for $f_{ij} \in \mf n, N \ge 2$. Then, for each pair of integers $2 \leq k \leq d, 1 \leq \ell \leq d-1$, there exists a maximal Cohen-Macaulay $R$-module $M$ (with no free summands) satisfying
            \begin{enumerate}[label = (\roman*)]
                \item $\mu_R(M) = k^{N-1}$
                \item $\rank_R(M) = \ell k^{N-2}$.
            \end{enumerate}
        In particular, the multiplicity of $M$ is $e_R(M) = d \ell k^{N-2}$.
    \end{proposition}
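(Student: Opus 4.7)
The plan is to specialize Proposition \ref{thm:det_formula_modules} to the $k$-fold setting by regrouping the $d$ factors of each summand of $f$ into $k$ groups. For each $1 \leq i \leq N$, set $g_{ij} := f_{ij}$ for $1 \leq j \leq k-1$ and $g_{ik} := f_{ik}f_{i(k+1)}\cdots f_{id}$; each $g_{ij}$ lies in $\mf n$ and $g_{i1}g_{i2}\cdots g_{ik} = f_{i1}f_{i2}\cdots f_{id}$, so $f = \sum_{i=1}^N g_{i1}\cdots g_{ik}$. The same argument as in the proof of Proposition \ref{thm:det_formula_modules}, with $k$ in place of $d$ throughout, then produces a reduced $k$-fold matrix factorization $(X,\phi) \in \matfac{Q}{k}{f}$ of rank $n := k^{N-1}$ satisfying $\det \phi_j = \pm f^{k^{N-2}}$ for every $j \in \ZZ_k$: explicitly, one tensors together the rank-one $k$-fold factorizations $Y_i = (g_{i1}, g_{i2}, \ldots, g_{ik})$ and appeals to Proposition \ref{thm:det_formula}.

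For each $1 \leq \ell \leq k-1$, I would define $\psi := \phi_1\phi_2\cdots\phi_\ell$ and $\psi' := \phi_{\ell+1}\phi_{\ell+2}\cdots\phi_{k-1}\phi_0$, and set $M := \cok \psi$. The pair $(\psi, \psi')$ is a reduced $2$-fold matrix factorization of $f$: both composed maps have entries in $\mf n$ (since $\ell$ and $k-\ell$ are each at least one), and $\psi\psi' = \psi'\psi = f\cdot 1$ by the $k$-fold relation. By the standard correspondence between reduced $2$-fold matrix factorizations of $f$ and stable maximal Cohen--Macaulay $R$-modules (\cite[Proposition 5.6]{eisenbud1980homological}; see also \cite[Remark 8.8]{leuschke2012cohen}), $M$ is an MCM $R$-module with no free $R$-summands, and $\mu_R(M) = n = k^{N-1}$ by minimality of the presentation.

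To compute the rank, I would localize at the height-one prime $\mf p = (f) \subset Q$. Since $Q$ is regular, $Q_{\mf p}$ is a discrete valuation ring with uniformizer $f$ and residue field $\operatorname{Frac}(R)$; because $M$ is $f$-torsion, $M \otimes_R \operatorname{Frac}(R) \cong M_{\mf p}$ as $\operatorname{Frac}(R)$-vector spaces. The standard length formula for a map of free modules over a DVR then gives
\[
\rank_R M \;=\; \operatorname{length}_{Q_{\mf p}} M_{\mf p} \;=\; \ord_f(\det \psi) \;=\; \ell k^{N-2}.
\]
Finally, because $R$ is a Cohen--Macaulay local domain with multiplicity $e(R) = \ord(f) = d$ (a standard fact for hypersurfaces over regular local rings), the multiplicity-rank formula $e_R(M) = e(R)\cdot\rank_R(M)$ for MCM modules over a Cohen--Macaulay domain yields $e_R(M) = d\ell k^{N-2}$. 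The only step with any real content is the determinantal computation of the $R$-rank from $\det \psi$; everything else is a direct assembly of the tensor-product machinery from Section \ref{sec:tensor_prod} together with the classical hypersurface correspondence.
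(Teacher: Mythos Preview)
Your proof is correct and takes essentially the same approach as the paper: regroup the $d$ factors in each summand into $k$ groups, tensor the resulting rank-one $k$-fold factorizations, set $M=\cok(\phi_1\cdots\phi_\ell)$, and read off the rank from the determinant (your DVR argument is precisely the content of \cite[Proposition 5.6]{eisenbud1980homological}, which is what the paper cites). One remark: you restrict to $1\le\ell\le k-1$ rather than the stated $1\le\ell\le d-1$, but the paper's own proof does the same---the composition $\phi_1\cdots\phi_\ell$ in a $k$-fold factorization is only defined and reduced for $\ell\le k-1$, so the bound $d-1$ in the statement appears to be a typo for $k-1$.
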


    \begin{proof}
        Fix $2 \leq k \leq d$ and $1 \leq \ell \leq d-1$. For each integer $1\leq i \leq N$, choose a partition of the product $f_i = f_{i1}f_{i2}\cdots f_{id}$ into $k$ products of the $f_{ij}$. Note that the index $i$ is not taken modulo $d$. The choice of partition can be viewed as a rank one reduced matrix factorization of $f_i$ with $k$ factors, call it $Y_i \in \matfac{Q}{k}{f_i}$. Let $(X,\phi) \in \matfac{Q}{k}{f}$ be the tensor product of $Y_1,Y_2,\dots, Y_N$ as in the proof of Proposition \ref{thm:det_formula_modules} and set $M = \cok\phi_1\phi_2\cdots\phi_{\ell}$ (the choice to end at $\phi_1$ is not important; any composition of $\ell$ maps would suffice). Then $X$ has rank $k^{N-1}$ by Proposition \ref{thm:det_formula_modules} and we have a minimal free resolution of $M$ over $Q$
            \[0 \to X_{\ell} \xrightarrow{\phi_1\phi_2\cdots \phi_\ell} X_0 \to M \to 0.\] 
        Hence, we may tensor this resolution with $R$ to obtain a minimal presentation of $M$ over $R$. It follows that $M$ is an MCM $R$-module minimally generated by $k^{N-1}$ elements. 

        Again by Proposition \ref{thm:det_formula_modules}, we have $\det\phi_j = \pm f^{k^{N-2}}$, for each $j \in \ZZ_d$. Hence $\det(\phi_1\phi_2\cdots \phi_\ell) = \pm f^{\ell k^{N-2}}$. Since $f$ is irreducible, \cite[Proposition 5.6]{eisenbud1980homological} implies that $\rank_R(M) = \ell k^{N-2}$. Finally, since $R$ is a domain, the multiplicity of $M$ may be computed as $e_R(M) = e_R(R)\rank_R(M) = d \cdot  \ell k^{N-2}$.
    \end{proof}

    The minimal number of generators of an MCM module $M$ over a local ring $A$ is always    bounded above by the modules multiplicity. In other words, $\mu_A(M) \leq e_A(M)$. Maximal Cohen-Macaulay modules for which equality is attained are called \textit{Ulrich} modules. In \cite{ulrich1984gorenstein}, Ulrich asked if every Cohen-Macaulay local ring has an Ulrich module. After remaining open for nearly fourty years, Ulrich's question was answered in the negative in \cite{iyengar2024non}. They give examples of Gorenstein, and even complete intersection, local domains which have no Ulrich modules. Earlier examples of non-Cohen-Macaulay local rings which have no Ulrich modules were give by Yhee in \cite{yhee2023ulrich}. In light of these examples, the case of a hypersurface ring stands out as unique.
    
    \begin{corollary}\cite[Theorem 2.5]{herzog1991linear}\label{thm:Ulrich_existence}
        Assume that $(Q,\mf n)$ is a regular local ring, $f \in \mf n^2$ is irreducible, and set $R = Q/(f)$. Then there exists an Ulrich $R$-module.
    \end{corollary}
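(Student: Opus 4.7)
The plan is to obtain the Ulrich module as a direct specialization of Proposition \ref{thm:MCMs_from_tensoring}. Set $d = \ord(f)$; since $f \in \mf n^2$ we have $d \ge 2$. Because $f \in \mf n^d$, we can write $f = \sum_{i=1}^N f_{i1}f_{i2}\cdots f_{id}$ for some $N \ge 1$ and $f_{ij} \in \mf n$. The first preliminary step is to check that $N \ge 2$ is automatic under our hypotheses: if $N=1$ were sufficient, then $f = f_{11}f_{12}\cdots f_{1d}$ would express $f$ as a product of $d \ge 2$ non-units in the (regular, hence domain) local ring $Q$, contradicting irreducibility of $f$.

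The second step is to apply Proposition \ref{thm:MCMs_from_tensoring} with parameters $k = d$ and $\ell = 1$. These satisfy the required constraints $2 \le k \le d$ and $1 \le \ell \le d-1$. The proposition produces an MCM $R$-module $M$ with
\[
\mu_R(M) = d^{N-1} \quad \text{and} \quad e_R(M) = d \cdot 1 \cdot d^{N-2} = d^{N-1}.
\]
Hence $\mu_R(M) = e_R(M)$, so $M$ is an Ulrich $R$-module by definition.

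There is essentially no obstacle, as Proposition \ref{thm:MCMs_from_tensoring} already carries all the weight; the only observation to make is numerical. Solving $\mu_R(M) = e_R(M)$ with the formulas from that proposition forces $k^{N-1} = d\ell k^{N-2}$, i.e., $k = d\ell$, and within the allowed ranges $(k,\ell) = (d,1)$ is the unique choice. This explains why taking $k = d$ (so the inner tensor factors themselves become rank-one $d$-fold factorizations of products of $d$ elements of $\mf n$) is exactly what produces the Ulrich condition.
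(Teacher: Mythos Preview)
Your proof is correct and follows exactly the paper's approach: apply Proposition~\ref{thm:MCMs_from_tensoring} with $k = \ord(f)$ and $\ell = 1$. Your explicit check that $N \ge 2$ (via irreducibility of $f$ in the regular, hence factorial, ring $Q$) fills in a detail the paper leaves implicit, and the closing uniqueness remark is a pleasant aside but not logically required.
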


    \begin{proof}
        This follows from Proposition \ref{thm:MCMs_from_tensoring} by picking $k = \ord(f)$ and $\ell = 1$.
    \end{proof}

    A subcategory $\mc B$ of an abelian category $\mc A$ is called \textit{extension closed} if it is closed under direct summands and if for each short exact sequence $0 \to L \to M \to N \to 0$, if $L,N \in \mc B$, then $M \in \mc B$ as well. As it turns out, the full subcategory of Ulrich modules over a Cohen-Macaulay local ring is rarely extension closed; in dimension one this happens if and only if the ring is regular (see \cite[Corollary 5.2.7]{dao2024exact}). The constructions of this section induce a similar result for hypersurface domains.

    \begin{corollary}
        Assume $(Q,\mf n)$ is a regular local ring, $f \in \mf n$ is irreducible, and set $R = Q/(f)$. Then the subcategory of Ulrich $R$-modules is extension closed if and only if $R$ is regular.
    \end{corollary}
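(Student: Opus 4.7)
My plan treats the two implications separately. For $(\Leftarrow)$, if $R$ is regular then $\ord f = 1$, so $R$ itself is a regular local domain of multiplicity one. Over such a ring every MCM module is free, and every free module $R^r$ is Ulrich since $\mu(R^r) = r = e_R(R^r)$. Hence the subcategory of Ulrich modules coincides with the category of finitely generated free $R$-modules, which is trivially extension closed because any short exact sequence with a free quotient splits.

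For the converse I argue the contrapositive: assume $d = \ord f \ge 2$. Since $f$ is irreducible and $d \ge 2$, $f$ cannot itself be a product of $d$ elements of $\mf n$; hence any expression $f = \sum_{i=1}^N f_{i1}\cdots f_{id}$ with $f_{ij} \in \mf n$ (which exists because $f \in \mf n^d$) must have $N \ge 2$. I form the tensor product $(X,\phi) = \bigotimes_{i=1}^N (f_{i1},\ldots,f_{id}) \in \matfac{Q}{d}{f}$ as in Proposition \ref{thm:det_formula_modules}, so that $\rank X = d^{N-1}$ and $\det \phi_k = \pm f^{d^{N-2}}$ for every $k \in \ZZ_d$. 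The goal is to produce a short exact sequence
\[
0 \longrightarrow \cok \phi_2 \longrightarrow \cok(\phi_1\phi_2) \longrightarrow \cok \phi_1 \longrightarrow 0
\]
whose ends are Ulrich while the middle is not.

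The Ulrich-ness of the two end modules follows from Proposition \ref{thm:MCMs_from_tensoring} (with $k = d$ and $\ell = 1$) applied to $X$ and to the shifted factorization $TX$. For the middle, set $M = \cok(\phi_1\phi_2)$; then $M$ is MCM as an extension of MCMs over the Cohen--Macaulay ring $R$, its presentation matrix $\phi_1\phi_2$ has entries in $\mf n^2$ so $\mu(M) = d^{N-1}$, and the identity $\det(\phi_1\phi_2) = \pm f^{2d^{N-2}}$ combined with \cite[Proposition 5.6]{eisenbud1980homological} gives $\rank_R M = 2d^{N-2}$ and hence $e_R(M) = 2d^{N-1} \ne d^{N-1} = \mu(M)$ since $d \ge 2$.

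The main step, and the one I expect to be the most delicate, is constructing the exact sequence. The key observation is that over $R$ the pair $(\phi_1,\, \phi_2\phi_3\cdots\phi_d)$ is a $2$-fold matrix factorization of $f$, and the associated $2$-periodic free resolution gives $\ker(\phi_1\otimes R) = \image(\phi_2\phi_3\cdots\phi_d\otimes R) \subseteq \image(\phi_2\otimes R)$. A short diagram chase then identifies the kernel of the natural surjection $\cok(\phi_1\phi_2)\twoheadrightarrow\cok\phi_1$ with $\cok\phi_2$, via the map induced by $\phi_1$. Once this is in place, the failure of extension closedness is immediate.
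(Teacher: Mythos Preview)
Your proposal is correct and follows essentially the same approach as the paper: both argue the contrapositive for the nontrivial direction, build the same tensor-product factorization $X = \bigotimes_i (f_{i1},\dots,f_{id})$, and extract the short exact sequence $0 \to \cok\phi_2 \to \cok(\phi_1\phi_2) \to \cok\phi_1 \to 0$ whose ends are Ulrich and whose middle has $\mu/e = 1/2$. Your justification that $N \ge 2$ (irreducibility of $f$ forbids a single-term expression) is a detail the paper leaves implicit, and your diagram chase for exactness is equivalent to the paper's two-row commutative diagram; note that you can streamline it by using injectivity of $\phi_1$ over $Q$ directly rather than passing through the $2$-periodic resolution over $R$.
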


    \begin{proof}
        If $R$ is regular, then every MCM module is free. Hence every Ulrich module is free and the subcategory of Ulrich modules is therefore extension closed.

        Conversely, suppose that $R = Q/(f)$ is not regular, that is, suppose that $f \in \mf n^2$. Set $d = \ord(f)$. Since $d \ge 2$, we may apply Proposition \ref{thm:MCMs_from_tensoring} to obtain a reduced matrix factorization $X= (\phi_1,\phi_2,\dots,\phi_{d-1},\phi_0) \in \matfac{Q}{d}{f}$ satisfying
            \begin{enumerate}[label = (\roman*)]
                \item $N = \cok \phi_1$ and $L = \cok\phi_{2}$ are Ulrich $R$-modules, and
                \item $M = \cok(\phi_1\phi_{2})$ satisfies $\ds\frac{\mu_R(M)}{e_R(M)} = \frac{1}{2}$. 
            \end{enumerate}
        In particular, $M$ is not Ulrich. The result now follows since the commutative diagram with exact rows
            \[
                \begin{tikzcd}
                    0 \rar &X_{2} \dar[equals] \rar{\phi_2} &X_1 \dar{\phi_1} \rar & L \dar \rar &0\\
                    0 \rar &X_2 \rar{\phi_1\phi_2} &X_0 \rar &M \rar &0
                \end{tikzcd}
            \]
        induces a short exact sequence of $R$-modules $0 \to L \to M \to N \to 0$. Once again, the choice of $\phi_1$ and $\phi_2$ was not important; any pair of consecutive maps in $X$ induce a short exact sequence of this form.
    \end{proof}

    Combining the constructions of this section and the indecomposability results of Section \ref{sec:disjoint_vars}, we obtain the following.

    \begin{theorem}\label{thm:indecomp_ulrich_mods}
        Let $Q = \Bbbk\llbracket x_1,\dots, x_n\rrbracket$ for a field $\Bbbk$, $f \in Q$ be irreducible of order $e \ge 2$, and set $R = Q/(f)$. Assume further that $\Bbbk$ contains a primitive $d^{\textit{th}}$ root of unity for all $2\leq d\leq e$. Suppose that $f$ can be written as $f = \sum_{i=1}^N f_{i1}f_{i2}\cdots f_{id}$, for $f_{ij} \in \mf (x_1,\dots,x_n)$ and $2 \leq d \leq e$, such that
            \begin{enumerate}[label = (\roman*)]
                \item for each $1\leq i,j \leq N$ with $i \neq j$, the power series $\{f_{i1},f_{i2},\dots,f_{id}\}$ and $\{f_{j1},f_{j2},\dots,f_{jd}\}$ are written in distinct sets of variables, and
                \item the rank one factorization $(f_{i1},f_{i2},\dots,f_{id}) \in \matfac{Q}{d}{f_{i1}f_{i2}\cdots f_{id}}$ is strongly indecomposable for each $1\leq i \leq N$.
            \end{enumerate}
        Then there exists an indecomposable MCM $R$-module $M$ satisfying $\rank_R(M) = d^{N-2}$, $\mu_R(M) = d^{N-1},$ and $e_R(M) = ed^{N-2}$. Moreover, if $d=e$, then $M$ is an indecomposable Ulrich $R$-module.
    \end{theorem}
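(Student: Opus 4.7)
The plan is to realize $M$ as the cokernel of a single map from an $N$-fold tensor product of rank-one factorizations, and then deduce indecomposability directly from the strong indecomposability machinery of Section \ref{sec:disjoint_vars}. Specifically, for each $1\le i \le N$, let
\[Y_i = (f_{i1},f_{i2},\dots,f_{id}) \in \matfac{Q}{d}{f_{i1}f_{i2}\cdots f_{id}}\]
be the rank-one factorization; by hypothesis (ii), each $Y_i$ is strongly indecomposable. Hypothesis (i) is exactly the disjoint-variables set-up of Notation \ref{notation}, so, proceeding inductively and using the associativity of $\wotimes{}$ together with the remark that the faithful embeddings into larger power series rings preserve strong indecomposability, I would apply Theorem \ref{thm:strong_indecomp} repeatedly to conclude that
\[X = Y_1 \wotimes{} Y_2 \wotimes{} \cdots \wotimes{} Y_N \in \matfac{Q}{d}{f}\]
is a strongly indecomposable reduced matrix factorization of $f$.

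Next, define $M = \cok\phi_1$, where $\phi_1$ is the first map of $X$. Because $X$ is strongly indecomposable, Lemma \ref{thm:si_corollary}\ref{thm:si_corollary_3} yields that $M$ is an indecomposable MCM $R$-module, and $M$ has no free summand since $X$ is reduced. The numerical invariants then fall directly out of Proposition \ref{thm:MCMs_from_tensoring} applied with $k=d$ and $\ell = 1$: the underlying factorization $X$ has rank $d^{N-1}$, so $\mu_R(M) = d^{N-1}$; the determinant formula of Proposition \ref{thm:det_formula} combined with \cite[Proposition 5.6]{eisenbud1980homological} gives $\rank_R(M) = d^{N-2}$; and since $R$ is a domain of multiplicity $\ord(f) = e$, we get $e_R(M) = e \cdot \rank_R(M) = e d^{N-2}$.

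Finally, for the Ulrich conclusion in the case $d = e$, I would simply observe the equality
\[e_R(M) = e\, d^{N-2} = d \cdot d^{N-2} = d^{N-1} = \mu_R(M),\]
so $M$ attains the generator/multiplicity bound and is Ulrich.

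The only step that requires any genuine care (as opposed to invocation of previous results) is the inductive assembly of the tensor product: one must ensure that after forming $Y_1 \wotimes{} \cdots \wotimes{} Y_k$, which lives in the power series ring on the variables used by $Y_1,\dots,Y_k$, the next factor $Y_{k+1}$ uses a disjoint set of variables so that the hypotheses of Notation \ref{notation} and Theorem \ref{thm:strong_indecomp} apply at the next stage. Hypothesis (i) supplies exactly this, so the induction goes through and the rest of the argument is a matter of bookkeeping.
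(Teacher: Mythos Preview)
Your proposal is correct and matches the paper's own proof essentially line for line: form $X = \bigotimes_{i=1}^N (f_{i1},\dots,f_{id})$, apply Theorem~\ref{thm:strong_indecomp} (iteratively, using hypothesis (i)) to get strong indecomposability, invoke Lemma~\ref{thm:si_corollary}\ref{thm:si_corollary_3} for indecomposability of the cokernel, and read off the invariants via Proposition~\ref{thm:MCMs_from_tensoring}. The paper is terser but follows exactly the same route.
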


    \begin{proof}
        The assumptions allow us to apply Theorem \ref{thm:strong_indecomp} to conclude that the tensor product
            \[X = \bigotimes_{i=1}^N (f_{i1},f_{i2},\dots,f_{id})\]
        is a strongly indecomposable matrix factorization of $f$ in $\matfac{Q}{d}{f}$. If $X = (\Phi_1,\Phi_2,\dots,\Phi_{d-1},\Phi_{0})$, then Lemma \ref{thm:si_corollary} implies that $\cok\Phi_i$ is and indecomposable MCM $R$-module for each $i \in \ZZ_d$. The rank, minimal number of generators, and multiplicity counts all follow as in the proof of Proposition \ref{thm:MCMs_from_tensoring}.
    \end{proof}

    Theorem \ref{thm:indecomp_ulrich_mods} can be applied to any polynomial (or power series) of the form given in Example \ref{ex:si_rank_one}. We give one specific instance of this below.

    \begin{example}\label{ex:generic_sum}
        Let $Q = \Bbbk \llbracket x_{ij} : 1\leq i \leq N, 1 \leq j \leq d\rrbracket$ where $N,d\ge 2$, and $\Bbbk$ is a field containing a primitive $d^{\textit{th}}$ root of unity. Consider the polynomial
            \[f = \sum_{i=1}^N x_{i1}^{a_{i1}}x_{i2}^{a_{i2}}\cdots x_{id}^{a_{id}} \in Q,\]
        for integers $a_{ij} \ge 1$, and assume that $f$ is irreducible. Each of the rank one matrix factorizations $X_i = (x_{i1}^{a_{i1}},x_{i2}^{a_{i2}},\dots, x_{id}^{a_{id}}) \in \matfac{Q}{d}{x_{i1}^{a_{i1}}x_{i2}^{a_{i2}}\cdots x_{id}^{a_{id}}}$, $1\leq i \leq N$, is strongly indecomposable (see Example \ref{ex:si_rank_one}). Hence, Theorem \ref{thm:strong_indecomp} implies that $X = \bigotimes_{i=1}^N X_i \in \matfac{Q}{d}{f}$ is strongly indecomposable of rank $d^{N-1}$. Set $X = (\Phi_1,\Phi_2,\dots,\Phi_0)$ and $M_i = \cok\Phi_i$, $i \in \ZZ_d$.

        If $a_{ij} = a \ge 1$ for all $i,j$, then $\ord(f) = da$. In this case, each module $M_i$, $i \in \ZZ_d$, is an indecomposable MCM $R$-module satisfying
            \[\frac{\mu_R(M_i)}{e_R(M_i)} = \frac{d^{N-1}}{(da)d^{N-2}} = \frac{1}{a}.\]
        If $a = 1$, then each $M_i$, $i \in \ZZ_d$, is an indecomposable Ulrich module minimally generated by $d^{N-1}$ elements and of rank $d^{N-2}$.
    \end{example}

    The notion of \textit{Ulrich complexity} was introduced in \cite{blaser2017ulrich} to quantify the possible ranks of Ulrich modules over hypersurface rings. Indeed, the Ulrich complexity of homogeneous polynomial of degree $d$ is the smallest integer $r$ such that there exists a matrix factorization $(A,B) \in \matfac{}{2}{f}$ satisfying $\det(A) = f^r$ and such that the entries of $A$ are linear forms. In this case, we will write $\ulcomp(f) = r$. The Ulrich complexity of several examples and classes of polynomials was computed in \cite{blaser2017ulrich} but, in general, Ulrich complexity is difficult to determine. For an example of their results, consider a generic trinomial $f = x_{11}x_{12}x_{13} + x_{21}x_{22}x_{23} + x_{31}x_{32}x_{33}$ (i.e. $N = 3, d=3$, and $a=1$ from above). They show that $\ulcomp(f) = 3$ \cite[Proposition 4.10]{blaser2017ulrich}. The matrix factorization of $f$ constructed in Example \ref{ex:generic_sum} produces Ulrich modules which attain this minimum and are (necessarily) indecomposable.

    If $f$ is a homogeneous polynomial which can be written as a sum of $N$ products of linear forms, then the tensor product construction gives an upper bound for Ulrich complexity; $\ulcomp(f) \leq d^{N-2}$ \cite[Theorem 2.1]{blaser2017ulrich}. Interestingly, Example \ref{ex:generic_sum} produces indecomposable Ulrich modules attaining this upper bound for any generic polynomial of the form $f = \sum_{i=1}^N x_{i1}x_{{i2}}\cdots x_{id}$. If the Ulrich complexity is strictly less than $d^{N-2}$ in this case, then it seems one would need a construction more efficient than the tensor product to attain the minimum.

\begin{ack}
    The first author's work on this project was supported by the University of Utah's Undergraduate Research Opportunities Program (UROP) and the Mathematics Department REU. The first author was also supported by the NSF RTG Grant \#1840190. We would also like to thank the anonymous referee for their helpful suggestions, which have improved the paper. 
\end{ack}   

\bibliographystyle{amsalpha}
\bibliography{references}

\end{document}